\title[Rational points on log Fano $3$-folds]
{Rational points on log Fano threefolds 
over a finite field}
\author{Yoshinori  Gongyo}
\address{Graduate School of Mathematical Sciences, 
the University of Tokyo, 3-8-1 Komaba, Meguro-ku, Tokyo 153-8914, Japan.}
\email{gongyo@ms.u-tokyo.ac.jp}
\author{Yusuke Nakamura}
\address{Graduate School of Mathematical Sciences, 
the University of Tokyo, 3-8-1 Komaba, Meguro-ku, Tokyo 153-8914, Japan.}
\email{nakamura@ms.u-tokyo.ac.jp}
\author{Hiromu Tanaka}
\address{Department of Mathematics, Imperial College, London, 180 Queen's Gate, 
London SW7 2AZ, UK} 
\email{h.tanaka@imperial.ac.uk}
\subjclass[2010]{Primary 14J45; Secondary 14G05, 14E30}
\keywords{rational points, Fano varieties, Witt rationality, rational chain connectedness}
\newtheorem{thm}{Theorem}[section]
\newtheorem{lem}[thm]{Lemma}
\newtheorem{cor}[thm]{Corollary}
\newtheorem{prop}[thm]{Proposition}
\newtheorem{claim}[thm]{Claim}
\theoremstyle{definition}
\newtheorem{defi}[thm]{Definition}
\newtheorem{example}[thm]{Example}
\newtheorem{conj}[thm]{Conjecture}
\theoremstyle{remark}
\newtheorem{rmk}[thm]{Remark}
\newtheorem*{ackn}{Acknowledgements}
\newcommand{\MO}{\mathcal{O}}
\newcommand{\mbN}{\mathbb{N}}
\newcommand{\mbQ}{\mathbb{Q}}
\newcommand{\Q}{\mathbb{Q}}
\newcommand{\mbF}{\mathbb{F}}
\newcommand{\mbZ}{\mathbb{Z}}
\newcommand{\mcO}{\mathcal{O}}
\newcommand{\red}[0]{{\operatorname{red}}}
\newcommand{\Ex}[0]{{\operatorname{Ex}}}
\DeclareMathOperator{\Supp}{Supp}
\DeclareMathOperator{\Spec}{Spec}
\DeclareMathOperator{\Zeros}{Zeros}
\DeclareMathOperator{\coeff}{coeff}
\DeclareMathOperator{\ob}{ob}
\begin{document}
\begin{abstract}
We prove the $W\mathcal{O}$-rationality of klt threefolds 
and the rational chain connectedness of klt Fano threefolds 
over a perfect field of characteristic $p>5$. 
As a consequence, any klt Fano threefold over a finite field has a rational point. 
\end{abstract}

\maketitle

\tableofcontents

\section{Introduction}
The problem whether a certain system of polynomials has a common solution or not is 
classical but has been of great interest in the history of mathematics. 
In the geometrical context, 
Fano varieties defined over a certain field $k$ are believed to have a $k$-rational point. 
For instance, the classically known Chevalley--Warning theorem states that 
homogeneous polynomials $f_1, \ldots, f_l \in \mbF _q[x_0, \ldots, x_n]$ with $(n+1)$-many variables 
over a finite field $\mbF _q$ have a non-trivial common solution if $\sum _{1 \le i \le l} \deg f_i \le n$ holds. 
More precisely, it states that the number of common solution is divisible by the characteristic $p$ of 
the field $\mbF _q$. 
Geometrically, it can be interpreted as 
that the number of the $\mbF _q$-rational points on 
a complete intersection Fano variety is one modulo $p$. 
This kind of problem was answered in the smooth case by Esnault \cite{Esnault}: 
\begin{thm}[{Esnault \cite{Esnault}}]\label{thm:Esnault}
Let $X$ be a smooth geometrically connected projective variety over a finite field $\mbF _q$ with $q$ elements. 
If $-K_X$ is ample, then 
the number of $\mbF _q$-rational points satisfies $\# X (\mbF _q) \equiv 1 \pmod q$. 
\end{thm}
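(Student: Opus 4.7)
The plan is to combine the Grothendieck--Lefschetz trace formula in $\ell$-adic \'etale cohomology with the rational chain connectedness of smooth Fano varieties (Koll\'ar--Miyaoka--Mori, Campana) and the decomposition of the diagonal of Bloch--Srinivas. Fix a prime $\ell\ne p$, set $n=\dim X$, and write $\overline X := X\times _{\mbF _q}\overline{\mbF _q}$. The trace formula gives
\[
\# X(\mbF _q) \;=\; \sum _{i=0}^{2n}(-1)^i\,\mathrm{Tr}\bigl(F^*\mid H^i(\overline X,\mbQ _\ell)\bigr),
\]
where $F$ is the geometric Frobenius. Geometric connectedness forces $H^0=\mbQ _\ell$ with $F^*=\mathrm{id}$, contributing $1$, so it remains to show that $\mathrm{Tr}(F^*\mid H^i)\in q\mbZ$ for every $i\ge 1$.

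\textbf{Decomposition of the diagonal.} Since $-K_X$ is ample, $\overline X$ is rationally chain connected, hence by Bloch--Srinivas there exist a positive integer $N$, a divisor $D\subsetneq\overline X$, a closed point $P\in\overline X$, and an equality
\[
N\cdot \Delta _{\overline X} \;=\; \Gamma _1+\Gamma _2 \in \mathrm{CH}^n(\overline X\times\overline X)_{\mbQ},
\]
with $\Gamma _1$ supported on $D\times\overline X$ and $\Gamma _2$ supported on $\overline X\times\{P\}$. By replacing $\Gamma _i$ with the Galois-averages of their conjugates over a finite extension $\mbF _{q^m}/\mbF _q$ (and multiplying $N$ by $m$), we may assume $D$ and $P$ descend to $\mbF _q$-subschemes of $X$. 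Acting as correspondences on $H^i(\overline X,\mbQ _\ell)$ for $i\ge 1$, the class $[\Gamma _2]_*$ vanishes because pullback from the support of $\Gamma _2$ factors through $H^i$ of a $0$-dimensional scheme, while $[\Gamma _1]_*$ factors as
\[
H^i(\overline X,\mbQ _\ell) \xrightarrow{\,a\,} H^{i-2}(\widetilde D,\mbQ _\ell)(-1) \xrightarrow{\,b\,} H^i(\overline X,\mbQ _\ell),
\]
where $\widetilde D\to D$ is a resolution (or, in general, an alteration of de Jong, absorbing its degree into $N$).

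\textbf{Coniveau bound and conclusion.} Since $b\circ a = N\cdot\mathrm{id}$ is invertible on $H^i(\overline X,\mbQ _\ell)$, the map $a$ is an $F^*$-equivariant injection. Therefore the eigenvalues of $F^*$ on $H^i(\overline X,\mbQ _\ell)$ form a subset of those on $H^{i-2}(\widetilde D,\mbQ _\ell)(-1)$. Frobenius acts on the Tate twist $\mbQ _\ell(-1)$ by multiplication by $q$, and its eigenvalues on $H^{i-2}(\widetilde D,\mbQ _\ell)$ are algebraic integers by Deligne's theorem, so every eigenvalue of $F^*$ on $H^i(\overline X,\mbQ _\ell)$ lies in $q\,\overline{\mbZ}$. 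Consequently
\[
\mathrm{Tr}\bigl(F^*\mid H^i(\overline X,\mbQ _\ell)\bigr) \;\in\; q\,\overline{\mbZ}\cap\mbZ \;=\; q\mbZ,
\]
and summing over $i\ge 1$ yields $\# X(\mbF _q)\equiv 1 \pmod q$.

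\textbf{Main obstacle.} The crux is securing the Bloch--Srinivas decomposition in positive characteristic: one must verify that rational chain connectedness (as opposed to separable rational connectedness, which can fail for Fano varieties in small characteristic) is enough, via the identity $\mathrm{CH}_0(\overline X_{K})_{\mbQ}=\mbQ$ for every extension $K/\overline{\mbF _q}$. Once this is in hand, the subsequent coniveau/Tate-twist argument is essentially formal. A secondary issue, the need for a resolution of the possibly singular divisor $D$, is circumvented by de Jong's alterations at the cost of enlarging $N$, which does not affect the final divisibility since the argument never needs $N$ coprime to $p$.
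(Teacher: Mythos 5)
Your proof is correct and reproduces Esnault's own argument in \cite{Esnault}, which the present paper cites rather than reproves: Koll\'ar's rational chain connectedness of smooth Fano varieties over any field of positive characteristic, the Bloch--Srinivas decomposition of the diagonal applied over the algebraic closure of $\mbF_q(X)$, the resulting coniveau bound on Frobenius eigenvalues, and the $\ell$-adic Lefschetz trace formula. Two small refinements are worth recording: when $D$ is singular the correspondence $[\Gamma_1]_*$ factors most robustly through the cohomology with supports $H^i_{\overline D}(\overline X,\mbQ_\ell)$, whose Frobenius eigenvalues lie in $q\overline{\mbZ}$ by Deligne's integrality theorem with supports, rather than literally through $H^{i-2}(\widetilde D,\mbQ_\ell)(-1)$ of an alteration (lifting the cycle $\Gamma_1$ to $\widetilde D\times\overline X$ needs a bit of care with strict transforms and may change the constant $N$); and the passage from $q\overline{\mbZ}$ to $q\mbZ$ either uses Deligne's rationality of each $\mathrm{Tr}(F^*\mid H^i)$, or, more cheaply, can be applied only to the whole alternating sum $\#X(\mbF_q)-1\in\mbZ$. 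The concern you raise in the final paragraph is somewhat overstated: rational chain connectedness of $\overline X$ (known unconditionally for Fano varieties, without separability) already yields $\mathrm{CH}_0(\overline X_K)_{\mbQ}=\mbQ$ for every algebraically closed extension $K$, which is exactly what Bloch--Srinivas requires.
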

\noindent
One motivation of this paper is to generalize this result to three dimensional varieties with singularities. 
Indeed we prove that the same statement holds for three dimensional varieties of Fano type 
defined over a finite field of characteristic larger than five. 
\begin{thm}\label{mainthm:ratpo}
Let $X$ be a three dimensional normal geometrically connected projective variety over a finite field 
$\mbF _q$ of characteristic larger than five. 
If $X$ is of Fano type, 
then the number of $\mbF _q$-rational points satisfies $\# X (\mbF _q) \equiv 1 \pmod q$. 
\end{thm}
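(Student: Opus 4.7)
The plan is to combine the two results announced in the abstract---(A) $W\mathcal{O}$-rationality of klt threefolds in characteristic $p>5$, and (B) rational chain connectedness of klt Fano threefolds in the same setting---with a Berthelot--Bloch--Esnault style trace formula underlying Esnault's Theorem~\ref{thm:Esnault}. Since $X$ is of Fano type it is klt, so (A) applies, and there is a boundary $\Delta$ with $(X,\Delta)$ klt Fano, through which (B) yields rational chain connectedness of $X$.

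The first step is to establish
\[
H^i(X_{\bar{\mbF}_q},W\mathcal{O}_X)_\mbQ=0\qquad\text{for }i>0.
\]
Fix a resolution $f\colon Y\to X$, available in dimension three. By (A) the natural arrow $W\mathcal{O}_{X,\mbQ}\to Rf_*W\mathcal{O}_{Y,\mbQ}$ is a quasi-isomorphism, giving $H^i(X,W\mathcal{O}_X)_\mbQ\cong H^i(Y,W\mathcal{O}_Y)_\mbQ$ for every $i$. By (B), $X_{\bar{\mbF}_q}$ and hence $Y_{\bar{\mbF}_q}$ is rationally chain connected, and for a smooth projective rationally chain connected variety in positive characteristic the slopes of Frobenius on positive-degree crystalline cohomology are strictly positive; therefore the slope-zero piece $H^i(Y,W\mathcal{O}_Y)_\mbQ$ vanishes for $i>0$. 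Composing the two facts yields the display.

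The second step is to convert the vanishing into the point count via a congruence
\[
\#X(\mbF_q)\equiv\sum_i(-1)^i\mathrm{tr}\bigl(F^*\mid H^i(X,W\mathcal{O}_X)_\mbQ\bigr)\pmod q
\]
valid for proper (possibly singular) $X$; only $H^0=W(\mbF_q)_\mbQ$ (with identity Frobenius, since $X$ is geometrically connected) then contributes, forcing $\#X(\mbF_q)\equiv 1\pmod q$. Alternatively, one can apply the smooth-case congruence to $Y$ together with the elementary bookkeeping $\#X(\mbF_q)=\#Y(\mbF_q)-\#E(\mbF_q)+\#f(E)(\mbF_q)$ for the exceptional locus $E$, and handle the exceptional contribution by induction on dimension. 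Granting (A) and (B), the chief obstacle is precisely this singular trace formula: one must verify that $H^*(X,W\mathcal{O}_X)_\mbQ$ really captures the slope-zero part of a Weil cohomology controlling $\#X(\mbF_q)\bmod q$. Here the $W\mathcal{O}$-rationality of (A) is the critical input, since it reduces the comparison to the smooth model $Y$ on which the classical Berthelot--Bloch--Esnault argument applies verbatim.
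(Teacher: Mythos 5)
Your strategy is the right one and coincides with what the paper does (the authors sketch exactly this plan in Remark~\ref{rmk:proof}): resolve singularities, use $W\mcO$-rationality (Theorem~\ref{mainthm:WR}) to transfer Witt vector cohomology to the smooth model, use rational chain connectedness (Theorem~\ref{mainthm:rcc}) together with Esnault's slope estimate to get the vanishing there, then feed this into the Lefschetz trace formula of Berthelot--Bloch--Esnault. Your two proposed endgames (BBE's singular trace formula applied directly to $X$, versus transferring the point count to the resolution $Y$ and applying Esnault there) are both viable; the paper uses the latter via Theorem~\ref{rat-pt-birational}, which invokes \cite[Proposition 6.9]{BBE}.

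There is, however, a genuine gap in the step ``By (B), $X_{\bar{\mathbb{F}}_q}$ and \emph{hence} $Y_{\bar{\mathbb{F}}_q}$ is rationally chain connected.'' Rational chain connectedness does not automatically pass from a klt threefold to a resolution: a chain of rational curves in $X$ connecting two general points may break at the exceptional locus when pulled back, and one must know that the exceptional fibers of $Y\to X$ are themselves rationally chain connected to reconnect the pieces. This is precisely the content of Theorem~\ref{birational-RCC} in the paper, which is not elementary---its proof runs a pl-MMP from a log resolution down to $X$ and propagates rational chain connectedness through pl-divisorial contractions and pl-flips one step at a time (Lemma~\ref{RCC-lemma}, together with the surface results on fibers of contractions). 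Theorem~\ref{birational-RCC} is not a formal consequence of (A) or (B) as you have stated them; you would need to prove it or cite it explicitly. (Alternatively, one could avoid re-deriving the vanishing $H^i(X,W\mcO_{X,\mathbb Q})=0$ altogether by citing the result of Blickle--Esnault \cite{BE} directly, as the introduction of the paper points out: $W\mcO$-rationality plus rational chain connectedness of $X$ already gives the vanishing on $X$ by their theorem, without discussing whether $Y$ is RCC.) A smaller slip: you write that the slopes of Frobenius on $H^i_{\mathrm{cris}}$ are ``strictly positive'' for $i>0$; what Esnault actually gives---and what is needed to kill the slope-$[0,1)$ piece $H^i(Y,W\mcO_Y)\otimes K$---is that the eigenvalues are divisible by $q$, i.e.\ slopes $\geq 1$. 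Slopes merely in $(0,1)$ would not suffice.
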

\noindent
Here, we say that a normal projective variety $X$ defined over a field 
is \textit{of Fano type} if there exists an effective $\mbQ$-divisor $\Delta$ such that 
$(X, \Delta)$ is klt (see Subsection \ref{section:log_pair} for the definition) and $-(K_X + \Delta)$ is ample. 

By \cite[Theorem 1.1]{BBE} and the Lefschetz trace formula, 
Theorem \ref{mainthm:ratpo} is a consequence of the following proposition, 
which states the vanishing of the Witt vector cohomology 
(see Subsection \ref{subsection:witt} for the definition). 

\begin{thm}\label{mainthm:WV}
Let $X$ be a three dimensional normal projective variety over a perfect field 
of characteristic larger than five. 
If $X$ is of Fano type, then $H^i (X, W\mcO_{X, \mbQ}) = 0$ holds for $i>0$. 
\end{thm}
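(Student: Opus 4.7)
The plan is to deduce Theorem~\ref{mainthm:WV} directly from the two main results announced in the abstract: the $W\mcO$-rationality of klt threefolds and the rational chain connectedness of klt Fano type threefolds, both over perfect fields of characteristic $p>5$. The deduction itself is short; the bulk of the work is hidden in those two inputs.

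First I would fix a projective log resolution $f\colon Y\to X$, whose existence in this setting is provided by Cossart--Piltant's resolution of threefolds in positive characteristic. Applying the $W\mcO$-rationality statement for klt threefolds to $f$ gives
$$Rf_{*}W\mcO_{Y,\mbQ}\simeq W\mcO_{X,\mbQ}$$
in the derived category, and the Leray spectral sequence reduces the target vanishing to
$$H^i(Y,W\mcO_{Y,\mbQ})=0 \qquad \text{for all } i>0.$$
The singularities of $X$ have thus been pushed onto a smooth projective model, and from this point on we may forget them.

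Next I would invoke the rational chain connectedness of klt Fano type threefolds (the second main ingredient). Since rational chain connectedness is a birational invariant among smooth projective varieties, $Y$ inherits it from $X$. On a smooth projective rationally chain connected variety in positive characteristic, the vanishing of higher $W\mcO_{\mbQ}$-cohomology is supplied by Chatzistamatiou--R\"ulling's theory: concretely, by their birational invariance of $W\mcO_{\mbQ}$-cohomology on smooth proper varieties together with the explicit vanishing on a rational model, or, when available, directly from their vanishing theorem for separably rationally connected smooth projective varieties.

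The real difficulty lies not in this formal deduction but in the two intermediate results. The harder of them is the $W\mcO$-rationality of klt threefolds: the plan there would be to transplant the Koll\'ar--Kov\'acs-style proof that klt singularities are rational into the Witt-vector setting, replacing characteristic-zero Kodaira vanishing by the Kawamata--Viehweg-type vanishing now available for klt threefolds in $p>5$ (Hacon--Witaszek, Birkar and collaborators) and using the threefold MMP of Birkar, Hacon--Xu, and Waldron to dismantle the log resolution inductively. The rational chain connectedness part, closer in spirit to the characteristic-zero arguments of Zhang and Hacon--McKernan via MRC fibrations, again relies on the same threefold MMP machinery together with bend-and-break type considerations. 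These MMP prerequisites are what ultimately force the restriction $p>5$.
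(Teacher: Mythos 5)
Your overall architecture matches the paper: both reduce to a smooth resolution via $W\mcO$-rationality of klt threefolds, invoke rational chain connectedness of Fano type threefolds, and close with the Leray spectral sequence. The gap is in your final step, where you need the vanishing $H^i(Y,W\mcO_{Y,\Q})=0$ for a smooth projective rationally chain connected threefold $Y$. Neither of your two suggested routes is available. Your first route (birational invariance of $W\mcO_{\Q}$-cohomology on smooth proper models plus ``the explicit vanishing on a rational model'') would require $Y$ to be birational to a smooth rational variety, which is much stronger than what rational chain connectedness gives and is not known here; birational invariance by itself gives no vanishing. Your second route invokes a vanishing theorem for \emph{separably} rationally connected smooth projective varieties, but in positive characteristic rational chain connectedness is strictly weaker than rational connectedness, and separable rational connectedness of Fano type threefolds is precisely the open problem the paper acknowledges in the introduction. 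The paper instead uses Esnault's theorem (\cite[Theorem 1.1]{Esnault}), via Theorem~\ref{thm:WV}: for a smooth projective rationally chain connected threefold the Chow group of zero-cycles is trivial after extending scalars, so Esnault's decomposition-of-the-diagonal argument yields $H^i(V,W\mcO_V)\otimes_{\mbZ}\Q=0$ for $i>0$. This is the ingredient that works without any separability hypothesis, and it is the one you should plug in.

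A secondary but real issue: you assert that $Y$ inherits rational chain connectedness from $X$ because rational chain connectedness is a birational invariant among smooth projective varieties. But $X$ is singular, so this invariance does not directly apply; transferring rational chain connectedness from the klt $X$ to its resolution $Y$ is itself a nontrivial assertion (the paper's Theorem~\ref{birational-RCC}), proved via the MMP and Proposition~\ref{MFS-special} — a cone over an elliptic curve shows that this fails without the klt hypothesis. Your account of the two underlying theorems ($W\mcO$-rationality via an MMP induction replacing the Koll\'ar--Kov\'acs argument, and rational chain connectedness via MRC fibrations and the threefold MMP) is in the right spirit, though the paper's $W\mcO$-rationality proof tracks the condition along pl-contractions and pl-flips directly using Witt-vector exact sequences rather than via a Kawamata--Viehweg-type vanishing.
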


\noindent
It can be seen as an analogy of the vanishing of the structure sheaf $H^i (X, \mcO_{X}) = 0 \ (i>0)$, 
which is true in characteristic zero for varieties of Fano type of arbitrary dimension by the Kawamata--Viehweg vanishing theorem. 
In the case of positive characteristic, 
we do not know whether the vanishing $H^i (X, \mcO_{X}) = 0 \ (i>0)$ holds or not 
by the lack of the Kodaira vanishing theorem, 
although the vanishing $H^i (X, \mcO_{X}) = 0 \ (i>0)$ implies 
the vanishing of $H^i (X, W\mcO_{X, \mbQ}) = 0 \ (i>0)$ (cf.\ Lemma \ref{usual-Witt-higher}). 
However, by \cite{BE}, the vanishing in Theorem \ref{mainthm:WV} is known to be true 
for a variety $X$ with the following two properties: 
\begin{itemize}
\item $X$ has $W\mcO$-rational singularities, and 
\item $X$ is rationally chain connected. 
\end{itemize}
Hence, this paper is devoted to prove the following two theorems.

\begin{thm}[{$=$\ Theorem \ref{thm:WO}}]\label{mainthm:WR}
Let $(X, \Delta)$ be a three dimensional klt pair over a perfect field in characteristic larger than five. 
Then $X$ has $W \mcO$-rational singularities, that is, 
there exists a proper birational morphism $\varphi:V \to X$ from a smooth threefold $V$ 
such that $R^i\varphi_*(W\MO_{V, \Q})=0$ holds for $i>0$. 
\end{thm}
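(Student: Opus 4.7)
The plan is to reduce the $W\MO$-rationality statement to the classical rationality $R^i\varphi_*\MO_V=0$ and then to propagate this vanishing to $W\MO_V$ via the Verschiebung filtration; this is the relative counterpart of the implication recorded in Lemma~\ref{usual-Witt-higher}. First I would fix a log resolution $\varphi\colon V \to X$ of $(X,\Delta)$, which exists by threefold resolution in positive characteristic (Cossart--Piltant).

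The central step is the classical rationality: $R^i\varphi_*\MO_V=0$ for all $i>0$. This is where the hypothesis $p>5$ is truly essential. A workable route is to run a relative $(K_V+\Delta_V)$-MMP over $X$ using the three-dimensional minimal model program in characteristic $p>5$ (Hacon--Xu, Birkar, Cascini--Tanaka--Xu) to reduce to a dlt or terminal model $Y\to X$, and to apply the Kawamata--Viehweg-type vanishing theorems available in this characteristic range (Hacon--Witaszek, Bernasconi--Tanaka, Arvidsson--Bernasconi--Lacini) to conclude the vanishing on $Y$; the vanishing is then transported back to $V$ by comparing birational models over $X$.

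Granted classical rationality, the Verschiebung short exact sequence on $V$
\[
0 \longrightarrow F^{n-1}_{V*}\MO_V \longrightarrow W_n\MO_V \longrightarrow W_{n-1}\MO_V \longrightarrow 0
\]
propagates the vanishing to truncated Witt sheaves. Since $V$ is smooth and $\varphi\circ F_V = F_X\circ\varphi$ with $F_X$ affine, one has $R^i\varphi_* F^{n-1}_{V*}\MO_V \cong F^{n-1}_{X*}R^i\varphi_*\MO_V = 0$ for $i>0$. Induction on $n$ and the associated long exact sequence yield $R^i\varphi_*W_n\MO_V=0$ for all $n\geq 1$ and $i>0$, together with the surjectivity of the transition maps $\varphi_*W_n\MO_V \twoheadrightarrow \varphi_*W_{n-1}\MO_V$. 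A Mittag--Leffler argument then kills $R^1\varprojlim_n$, so $R^i\varphi_*W\MO_V = R^i\varphi_*\varprojlim_n W_n\MO_V = 0$ for $i>0$, and inverting $p$ delivers the desired $R^i\varphi_*W\MO_{V,\Q}=0$.

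The only genuine obstacle is the classical rationality step: extending the familiar fact ``klt implies rational'' from characteristic zero to characteristic $p>5$ for threefolds rests on the full strength of the recent progress in positive-characteristic MMP and vanishing theorems. The Witt-vector bootstrap described above, by contrast, is essentially formal once Step 1 is in hand, and uses nothing more than smoothness of $V$ together with the exactness and naturality of Frobenius pushforward.
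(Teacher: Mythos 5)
The bootstrap in your final paragraph (propagating $R^i\varphi_*\MO_V=0$ through the Verschiebung filtration, then taking $\varprojlim$ with a Mittag--Leffler argument, then inverting $p$) is correct and is essentially Lemma~\ref{usual-Witt-higher} together with Lemma~\ref{lem:Q}. However, your Step~1 is a genuine gap: you are assuming that klt threefold singularities in characteristic $p>5$ have \emph{classical} rational singularities, i.e.\ $R^i\varphi_*\MO_V=0$. The paper explicitly states, in the discussion after Theorem~\ref{mainthm:WR}, that this is not known --- ``in dimension three, we do not know whether klt singularities are always rational singularities or not'' --- and the whole point of the paper is to avoid this by proving the strictly weaker $W\MO$-rationality directly. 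Your sketch of a proof of Step~1 (``run a relative MMP \ldots\ apply Kawamata--Viehweg-type vanishing theorems available in this characteristic range \ldots\ transported back to $V$ by comparing birational models'') would require Kodaira/Kawamata--Viehweg vanishing at a level of generality that simply does not exist in positive characteristic: it is precisely the failure of such vanishing that makes classical rationality of klt threefold singularities an open (and delicate) problem in this setting, and ``transporting'' $R^i\varphi_*\MO_V=0$ between birational models is itself a statement equivalent to what you are trying to prove, not a formal consequence of comparing models.

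The paper's actual argument circumvents classical rationality entirely. It first reduces to the $\Q$-factorial case via Proposition~\ref{MFS-special} (producing a small $\Q$-factorial modification whose exceptional fiber is a surface of Fano type). In the $\Q$-factorial case (Theorem~\ref{main-normal}), it runs a $(K_Y+S_Y+E)$-MMP over $X$ from a log resolution and shows \emph{inductively} that $W\MO$-rationality and the universal-homeomorphy condition on normalizations of plt centers are preserved at every step: pl-divisorial contractions via Proposition~\ref{pl-cont-Witt} and Proposition~\ref{pl-cont-bije}, pl-flips via Proposition~\ref{flip-Witt-vanish} and Proposition~\ref{pl-flip-bije}. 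The decisive input is the robustness of $W\MO_{\Q}$ under finite universal homeomorphisms (Lemma~\ref{geom-homeo}) --- which has no counterpart for $\MO$ --- combined with the Berthelot--Bloch--Esnault birational invariance (Proposition~\ref{prop:BBE}) and the vanishing on dlt log Fano \emph{surfaces} (Proposition~\ref{surface-Witt}), where classical rationality is available. So the $W\MO$-theory is not merely a formal wrapper around classical rationality: it is the mechanism that makes the induction close, because it tolerates the non-normal plt centers and purely inseparable phenomena that obstruct a direct proof of $R^i\varphi_*\MO_V=0$.
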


\begin{thm}[{$=$\ Proposition \ref{thm:rcc_Fanotype}}]\label{mainthm:rcc}
Let $X$ be a three dimensional projective variety of Fano type over a perfect field 
of characteristic larger than five. 
Then $X$ is rationally chain connected. 
\end{thm}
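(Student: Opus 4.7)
The plan is to use the three-dimensional MMP for Fano-type pairs, available in characteristic $p>5$ by the work of Hacon--Xu, Birkar, and Cascini--Tanaka--Xu, to reduce to a Mori fiber space, and then conclude by induction on the dimension of the base. Since rational chain connectedness is geometric and invariant under proper birational morphisms between normal varieties, we may and do assume that the ground field is algebraically closed, and we replace $(X,\Delta)$ by a small $\mbQ$-factorialization. Fano type is preserved under this modification, so $X$ is now a $\mbQ$-factorial klt Fano-type threefold.

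\textbf{Reduction to a Mori fiber space.} Because $-(K_X+\Delta)$ is ample, a general member of a sufficiently divisible linear system $|-m(K_X+\Delta)|$ allows us to enlarge $\Delta$ to an effective $\mbQ$-divisor $\Delta'\geq\Delta$ with $(X,\Delta')$ still klt and $K_X+\Delta'\sim_\mbQ 0$. A $K_X$-MMP is then the same as a $-(\Delta'-\Delta)$-MMP with effective scaling; by the known termination for MMPs on Fano-type threefolds in characteristic $p>5$, it ends at a Mori fiber space $\phi\colon\tilde X\to Y$, with $\tilde X$ still $\mbQ$-factorial klt of Fano type. Since each step of the MMP is $K$-negative with rationally connected fibers, rational chain connectedness of $\tilde X$ implies that of $X$, so it suffices to prove $\tilde X$ is rationally chain connected.

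\textbf{Induction on $\dim Y$.} If $\dim Y\geq 1$, then a general fiber $F$ of $\phi$ is a klt Fano-type variety of dimension at most two: a copy of $\mbP^1$ when $\dim Y=2$, or a klt log del Pezzo surface when $\dim Y=1$. By the canonical bundle formula, $Y$ is itself of Fano type of dimension $\leq 2$. In both dimensions, Fano-type varieties in characteristic $p>5$ are rationally chain connected (in fact rational, for klt log del Pezzo surfaces, by the positive-characteristic surface theory). A proper surjective morphism whose base and general fiber are both rationally chain connected has a rationally chain connected total space, so $\tilde X$ is rationally chain connected.

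\textbf{The main obstacle: $\dim Y=0$.} This case, where $\tilde X$ is a klt Fano threefold of Picard number one, is where the induction offers nothing: the MMP yields no non-trivial fibration. The plan is to first extract a suitable divisorial valuation, for example via a dlt modification of $(\tilde X,0)$ or of $(\tilde X,(1-\varepsilon)D)$ for a general $D\in|-mK_{\tilde X}|$, producing a $\mbQ$-factorial klt Fano-type model with Picard rank $\geq 2$; running a fresh $K$-MMP on this model then yields a Mori fiber space with positive-dimensional base, to which the preceding case applies. In the residual situation where $\tilde X$ is already terminal (so that any dlt modification is trivial), one invokes the classical fact, due to Koll\'ar in positive characteristic, that smooth Fano threefolds are rationally chain connected via bend-and-break. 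The chief difficulty of the argument is precisely this Picard-rank-one reduction, since it requires producing the extraction without leaving the klt Fano-type category.
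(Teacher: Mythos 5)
Your approach is genuinely different from the paper's and has a serious gap at its central step. The paper does not attempt to descend rational chain connectedness from a Mori fiber space structure; instead it considers the maximally rationally chain connected fibration $f: X \dashrightarrow S$, takes the pull-back $D$ (on $X$) of an ample divisor on $S$, runs a $(-D)$-MMP $X=X_0\dashrightarrow X_1\dashrightarrow\cdots$, and derives a contradiction with $\dim S>0$ by locating a $\psi_r$-contracted rational curve passing through a very general MRCC fiber $f_r^{-1}(s)$; that curve must then lie in the fiber, so it is $D_r$-trivial, contradicting the $(-D_r)$-negativity of the contraction. This shows $\dim S=0$ directly.

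The gap in your argument is the descent step: ``a proper surjective morphism whose base and general fiber are both rationally chain connected has a rationally chain connected total space.'' This is a Graber--Harris--Starr-type statement which is \emph{not} known in positive characteristic (the known section theorems require \emph{separable} rational connectedness of the fibers). Concretely, two very general points of $\tilde X$ lie in fibers over points of $Y$ that are joined by a chain of rational curves, but to connect the two fibers inside $\tilde X$ you need a rational (multi)section over each curve of the chain, and there is no way to produce one here. The paper's MRCC argument is designed precisely to sidestep this. Proposition \ref{relative-RCC} in the paper shows that every closed fiber of the contraction is rationally chain connected, which is needed, but it never deduces global RCC from it in the way you propose. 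Your Picard-rank-one case is a second, acknowledged, gap: the proposed extraction need not yield progress (the new MMP may simply re-contract the extracted divisor, returning you to the same $\tilde X$), and in the residual case ``terminal'' is not ``smooth,'' so Koll\'ar's bend-and-break result for smooth Fano threefolds does not apply. The paper handles this case by citing \cite[Proposition 3.4~(2)]{6authors}, embedded inside the MRCC contradiction argument rather than as a separate reduction.
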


\begin{rmk}\label{rmk:proof}
In Section \ref{section-rational-pt}, 
we shall prove Theorem \ref{mainthm:ratpo} directly from 
Theorem \ref{mainthm:WR} and Theorem \ref{mainthm:rcc} using the argument in \cite{Esnault} and \cite{BBE}. 
\end{rmk}

Theorem \ref{mainthm:WR} can be seen as an analogy of the following fact in characteristic zero 
which is proved in \cite[Section 1-3]{KMM1}
(cf.\ \cite[Theorem 5.2]{KMbook}):
\begin{itemize}
\item klt varieties have rational singularities, that is, 
there exists a proper birational morphism $\varphi:V \to X$ from a smooth variety $V$ 
such that $R^i\varphi_*(\mcO_{V})=0$ holds for $i>0$. 
\end{itemize}
It is known to fail in positive characteristic. 
In Subsection \ref{subsection:non-rational_klt}, we shall see examples in characteristic two 
which are klt but with non-rational singularities. 
This example (Proposition \ref{ex:dim4}) is four-dimensional. 
However, in dimension three, 
we do not know whether klt singularities are always rational singularities or not.

On the other hand, related to Theorem \ref{mainthm:rcc}, 
Campana and Koll\'ar--Miyaoka--Mori showed in the 1990's that
smooth Fano varieties over characteristic zero are rationally connected \cite{KMM2, campana}.
Recently, Zhang and Hacon--$\mathrm{M^{c}}$Kernan generalized this to log Fano varieties 
over characteristic zero \cite{Zhang, HM}.
Then, it is natural to ask the following question:
are smooth Fano varieties over a positive characteristic field rationally connected?
This is still an open problem, but it is known
that they are rationally chain connected (cf.\ \cite[Ch V, 2.14]{Kollar2}).
The first and third authors proved the above theorem for globally $F$-regular threefolds 
with Z.\ Li, Z.\ Patakfalvi, K.\ Schwede and R.\ Zong in \cite{6authors}
(see \cite[Definition 3.1]{ScSm} for the definition of globally $F$-regular varieties). 
In this paper, we generalize this result to varieties of Fano type by the similar strategy to \cite{6authors}. 
After finishing the work \cite{6authors}, we noticed 
that Shokurov in \cite{Shokurov} claimed that this problem can be reduced to the standard conjecture 
in the minimal model theory and the resolution of singularities. 
His argument looks similar to ours but we could not get the proof in positive characteristic. 
Hence, also for the reader's convenience, we include the proof of the rationally chain connectedness. 

The paper is organized as follows:
in Section \ref{section:pre}, we review some results on the minimal model theory in positive characteristic. 
Some results are generalized over a perfect field in order to apply them to finite fields in the latter sections. 
Further in Subsection \ref{subsection:witt} and \ref{Sub-fundamental-Witt}, 
we review the definition of the Witt vector cohomology and study its properties. 
In Section \ref{S-vanishing}, we prove the $W\mcO$-rationality of klt threefolds in characteristic $p > 5$
(Theorem \ref{mainthm:WR}). Further, as a consequence in characteristic $p \leq 5$, 
we reduce the problem on the existence of flips to a certain special case (Theorem \ref{thm:235}). 
It suggests that the Witt vector cohomology is helpful also for the minimal model theory itself. 
In Section \ref{section:rcc}, we prove the rational chain connectedness of threefolds of Fano type 
in characteristic $p > 5$ (Theorem \ref{mainthm:rcc}). 
We also treat the relative setting (Theorem \ref{thm:rcc_rel}). 
In Section \ref{section-rational-pt}, 
we prove the main theorem (Theorem \ref{mainthm:ratpo}) and the vanishing theorem (Theorem \ref{mainthm:WV}). 

After we put the first draft of this paper on arXiv, Wang also put a preprint \cite{Wang} there, 
which is on the rational chain connectedness. 
We note here that Theorem A and B in \cite{Wang} follow from Theorem \ref{thm:rcc_rel} in our paper. 

\begin{ackn} 
We would like to thank Professors Yujiro Kawamata and 
Shunsuke Takagi for warm encouragements and giving advices. 	
We would also like to thank Professors Paolo Cascini and H{\'e}l{\`e}ne Esnault for the discussions. 
The first author also wish to thank Professors Shigefumi Mori and Caucher Birkar for discussing 
the rationality of the bases of conic bundles and the 
rational chain connectedness of conic bundles. 
The first and third authors wish to express their deep gratitude to the collaborators of \cite{6authors}: 
Zhiyuan Li, Zsolt Patakfalvi, Karl Schwede and Runhong Zong for many discussions during 
working our collaboration. 
We also appreciate Professors Jean-Louis Colliot-Th\'el\`ene and J\'anos Koll\'ar 
for discussing the definition of the rational chain connectedness. 
We appreciate the referees for careful reading this paper and pointing out some mistakes. 
In particular, one of the referees pointed out the difference of the definition of Witt-rationality 
between the one in \cite{CR2} and the one in the previous version of this paper (cf Remark \ref{cr-remark}). 

The first author is partially supported by the Grant-in-Aid for Scientific Research
(KAKENHI No.\ 26707002).
The second author is partially supported by the Grant-in-Aid for Scientific Research
(KAKENHI No.\ 25-3003).
\end{ackn}

\section{Preliminaries}\label{section:pre}

\subsection{Notation and convention}
We summarize notation of this paper. 

\begin{itemize}
\item Throughout this paper, 
we work over a perfect field $k$ of characteristic $p>0$ 
unless otherwise specified. 

\item We often identify an invertible sheaf $L$ with the divisor corresponding to $L$. 
For example, we use the additive notation $L + A$ for a line bundle $L$ and a divisor $A$. 

\item For a coherent sheaf $F$ and a Cartier divisor $L$, 
we define $F(L):=F\otimes \mathcal O_X(L)$. 

\item We say that $X$ is a {\em variety} over $k$ if 
$X$ is an integral scheme which is separated and of finite type over $k$. 
We say that $X$ is a {\em curve} (resp. a {\em surface}, a {\em threefold}) 
if $X$ is a variety over $k$ with $\dim X=1$ (resp. $\dim X=2$, $\dim X=3$). 
\end{itemize}

\subsection{Log pairs}\label{section:log_pair}
We recall some notation in the theory of singularities in the minimal model program. 
For more details, we refer the reader \cite[Section~2.3]{KMbook} and \cite{Kollar}. 

We say that $(X, \Delta)$ is a \textit{log pair} 
if $X$ is a normal variety and $\Delta$ is an effective $\mbQ$-divisor such that 
$K_X + \Delta$ is $\mbQ$-Cartier. 
For a proper birational morphism $f: X' \to X$ from a normal variety $X'$ 
and a prime divisor $E$ on $X'$, the \textit{discrepancy} of $(X, \Delta)$ 
at $E$ is defined as
\[
a_E (X, \Delta) := \coeff _E (K_{X'} - f^* (K_X + \Delta)). 
\]

We say that a log pair $(X, \Delta)$ is \textit{log canonical} 
if $a_E (X, \Delta) \ge -1$ for any divisor $E$ over $X$. 
Further, we say that a log pair $(X, \Delta)$ is \textit{Kawamata log terminal}, \textit{klt} for short 
(resp.\ \textit{purely log terminal}, \textit{plt} for short), if $a_E (X, \Delta) > -1$ for any divisor 
(resp.\ exceptional divisor) $E$ over $X$. 

We call a log pair $(X, \Delta)$ 
\textit{divisorial log terminal} (\textit{dlt} for short) 
when there exists a log resolution $f : Y \to X$ such that 
$a_E(X, \Delta) > -1$ for any $f$-exceptional divisor $E$ on $Y$.

We recall here the definition of log resolution. 
Let $D$ be a closed subset of a smooth scheme $X$ (over a perfect field $k$) and 
let $D_1, \ldots , D_n$ be the irreducible components of $D$ with the reduced scheme structures. 
We say that $D$ is \textit{simple normal crossing} if the scheme-theoretic intersection $\bigcap_{j \in J} D_j$  
is smooth over $k$ for every subset $J \subset \{1, \ldots , n \}$ (cf.\ \cite[Subsection 2.1]{KEW}). 
For a variety $X$ and a closed subset $Z$ of $X$, 
we say that $f:Y \to X$ is a \textit{log resolution} of $(X, Z)$ when 
$f$ is a proper birational morphism from a smooth variety $Y$ such that 
$f^{-1}(Z)$ is purely codimension one and 
$\Ex (f) \cup f^{-1}(Z)$ is simple normal crossing. In dimension three, 
there exists a log resolution for such a pair $(X,Z)$ by \cite{CP} and \cite{A}.

\begin{rmk}\label{rmk:imperfect}
In Lemma \ref{imperfect} we will need to work on a more general setting than the one above. 
In this setting, $X$ is allowed to be a normal, excellent scheme over a possibly imperfect field. 
All the above definitions can be extended to this setting. 
For details, in particular for the precise definitions of $K_X$, we refer to \cite[Definition 2.4, Definition 2.8]{Kollar}.
\end{rmk}

\subsection{MMP over a perfect field}

For later use, we summarize some results on the minimal model theory. 
First we collect some results which work in an arbitrary positive characteristic. 

\begin{lem}[Relative pseudo-effective cone theorem]\label{lemma-cone}
Let $k$ be a perfect field of characteristic $p>0$. 
Let $X$ be a $\Q$-factorial threefold 
which is projective over a separated scheme $U$ of finite type over $k$. 
Let $\Delta$ be a $\Q$-divisor on $X$ whose coefficients are contained in $[0, 1]$. 
Assume that $K_X+\Delta$ is pseudo-effective over $U$. 
Then, for every ample $\Q$-divisor $A$, 
there exists a finitely many curves $C_1, \cdots, C_r$ such that 
$$\overline{NE}(X/U)=\overline{NE}(X/U)_{K_X+\Delta+A \geq 0}+\sum_{i=1}^r \mathbb R_{\geq 0}[C_i].$$
\end{lem}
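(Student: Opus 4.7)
The plan is to reduce the relative statement to the absolute cone theorem for projective $\Q$-factorial threefolds in positive characteristic, by compactifying the base $U$.

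\textbf{Step 1 (Compactification).} Since $U$ is separated of finite type over $k$, Nagata's theorem gives an open immersion $U \hookrightarrow \bar U$ into a projective $k$-variety. Since $\pi : X \to U$ is projective, it extends to a projective morphism $\bar\pi : \bar X \to \bar U$ of projective $k$-varieties with $\bar\pi^{-1}(U) = X$. Because $X$ is already $\Q$-factorial, all modifications needed to render the compactification $\Q$-factorial can be localised on $\bar X \setminus X$; one either invokes a small $\Q$-factorialization of $\bar X$ (available for threefolds in positive characteristic) or performs blow-ups whose centres avoid $X$. The result is a $\Q$-factorial projective threefold $\bar X$ over $k$ containing $X$ as an open subscheme.

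\textbf{Step 2 (Extension of $\Delta$ and $A$).} Let $\bar\Delta$ be the Zariski closure of $\Delta$ in $\bar X$; its coefficients remain in $[0,1]$. Choose any $\Q$-divisor $A'$ on $\bar X$ whose restriction to $X$ equals $A$, and an ample $\Q$-divisor $H$ on $\bar U$. For $N \gg 0$ the divisor $\bar A := A' + N\bar\pi^* H$ is ample on $\bar X$. The essential point is that $\bar\pi^* H$ is numerically trivial on every curve contracted by $\bar\pi$, so under the restriction map $N^1(\bar X) \to N^1(X/U)$ the class of $\bar A$ maps to that of $A$. Consequently, the $(K_{\bar X}+\bar\Delta+\bar A)$-nonnegative part of $\overline{NE}(\bar X)$ restricts to the $(K_X+\Delta+A)$-nonnegative part of $\overline{NE}(X/U)$, and similarly for the negative parts.

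\textbf{Step 3 (Absolute cone theorem and descent).} Apply the absolute cone theorem for $\Q$-factorial projective threefolds over a perfect field in positive characteristic to $(\bar X, \bar\Delta, \bar A)$: there exist finitely many curves $\bar C_1,\dots,\bar C_N$ on $\bar X$ spanning the $(K_{\bar X}+\bar\Delta+\bar A)$-negative extremal rays of $\overline{NE}(\bar X)$. Selecting those $\bar C_i$ whose image in $\bar U$ is a closed point of $U$ (equivalently, whose class lies in $\overline{NE}(X/U) \subset \overline{NE}(\bar X)$) and representing them by curves $C_i \subset X$ yields the desired finitely many generators. To pass from an algebraically closed to a merely perfect base field $k$, one uses Galois descent: $\overline{NE}(X/U)$ and the extremal ray decomposition behave equivariantly under the base change $k \hookrightarrow \bar k$, and Galois orbits of extremal curves over $\bar k$ give $k$-cycle classes spanning the corresponding rays.

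\textbf{Main obstacle.} The most delicate step is Step 1: producing a $\Q$-factorial projective compactification $\bar X$ of $X$ in \emph{arbitrary} positive characteristic, without disturbing the open part $X$ where $\Delta$ and $A$ live. A secondary subtlety is verifying that the hypothesis ``$K_X+\Delta$ pseudo-effective over $U$'' is used correctly --- it is what allows the absolute cone theorem to deliver only finitely many negative rays after the ample perturbation by $A$, via the standard Kollár--Mori/Keel bend-and-break argument, which works in any positive characteristic.
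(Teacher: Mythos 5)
There is a genuine gap, and it is in the two places you yourself flagged as delicate.

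\textbf{Step 1 is not available in the stated generality.} The lemma is asserted for an \emph{arbitrary} perfect field of characteristic $p>0$. Producing a $\mathbb{Q}$-factorial projective compactification $\bar X$ of $X$ that is unchanged over $X$ requires a small $\mathbb{Q}$-factorialization, and for threefolds in positive characteristic this is an MMP output available (in this paper and its references) only for $p>5$. Blow-ups along centres away from $X$ will not in general make $\bar X$ $\mathbb{Q}$-factorial. Thus Step 1 already breaks the lemma's hypotheses.

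\textbf{Step 3 assumes exactly what needs to be proven.} There is no ``absolute cone theorem for $\mathbb{Q}$-factorial projective threefolds over a perfect field in positive characteristic'' that one can cite for arbitrary $p>0$, and the cone theorems that do exist for $p>5$ are stated for klt or dlt pairs, while $(\bar X,\bar\Delta)$ here is only a pair with coefficients in $[0,1]$. The ``absolute cone theorem'' you want to invoke \emph{is} this lemma in the case $U=\Spec k$; the reduction is circular. The correct mechanism, and the one the paper uses, is the Keel-type argument: after adding the ample $A$, the pseudo-effectivity assumption gives $K_X+\Delta+A\sim_{\mathbb{Q},U}E$ for some effective $E$, and then one runs the bend-and-break plus induction-on-the-support-of-$E$ argument of Keel's Proposition 0.6 (in its relative form, cf.\ Theorem 6.6 in Tanaka's paper on behaviour of canonical divisors). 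That argument works in any positive characteristic, needs no compactification of $X$ to be $\mathbb{Q}$-factorial, and is the whole point of the hypothesis ``$K_X+\Delta$ is pseudo-effective over $U$''. Your remark at the end that this hypothesis is what ``allows the absolute cone theorem to deliver only finitely many negative rays'' is a correct intuition, but the lemma is precisely that finiteness statement; it cannot be obtained by appealing to a stronger black-box result.

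A secondary issue: even granting Steps 1 and 2, the passage from finitely many $(K_{\bar X}+\bar\Delta+\bar A)$-negative extremal rays of $\overline{NE}(\bar X)$ to generators of the relative cone $\overline{NE}(X/U)$ is not automatic. The relative cone is (after quotienting by pullbacks from $U$) generated by curves over points of $U$, and this set of classes need not be a face of $\overline{NE}(\bar X)$, since the closure operation in $N_1(X/U)$ can behave differently from the closure in $N_1(\bar X)$ when $U\subsetneq\bar U$. The paper sidesteps all of this by arguing relatively from the start.
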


\begin{proof}
We can perturb $\Delta$, by using an ample $\Q$-divisor $A$, and 
we may assume that 
$$K_X+\Delta\sim_{\Q, U}E$$
where $E$ is an effective $\Q$-divisor. 
Then, the assertion is proved by the same proof as in \cite[Proposition 0.6]{Keel} 
(cf.\ \cite[Theorem~6.6]{Tanaka:behavior}). 
\end{proof}

\begin{lem}[Relative Keel's theorem]\label{relative-Keel}
Let $k$ be a perfect field of characteristic $p>0$. 
Let $\pi:X \to U$ be a projective morphism 
from a normal variety $X$ to a separated scheme $U$ of finite type over $k$. 
Let $L$ be a $\pi$-nef $\Q$-Cartier $\Q$-divisor on $X$. 
Assume that $L=A+E$ where $A$ is a $\pi$-ample $\Q$-Cartier $\Q$-divisor 
and $E$ is an effective $\Q$-divisor on $X$. 
If $L|_{\Supp E}$ is $\pi$-semi-ample, then $L$ is $\pi$-semi-ample. 
\end{lem}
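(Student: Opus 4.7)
The plan is to apply Keel's base-point-free theorem in its relative form, combined with a straightforward identification of the $\pi$-exceptional locus of $L$. Keel's absolute theorem \cite[Theorem~0.2]{Keel} says that a nef $\mbQ$-divisor $M$ on a projective scheme in positive characteristic is semi-ample if and only if $M|_{E(M)}$ is, where $E(M)$ denotes the $M$-exceptional locus. The relative version, valid for a projective morphism to a separated base of finite type over a perfect field of positive characteristic, is essentially contained in \cite[Theorem~6.6]{Tanaka:behavior}, which is already cited in the proof of the preceding lemma.

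First I would observe that the $\pi$-exceptional locus $E_\pi(L)$ of $L$ is contained in $\Supp E$. Indeed, on the complement $X \setminus \Supp E$ we have $L|_{X \setminus \Supp E} = A|_{X \setminus \Supp E}$ as $\mbQ$-Cartier $\mbQ$-divisors, since $E$ vanishes there; and $A$ is $\pi$-ample, so no complete curve contracted by $\pi$ and disjoint from $\Supp E$ can satisfy $L \cdot C = 0$. Therefore $E_\pi(L) \subseteq \Supp E$.

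Second, the hypothesis that $L|_{\Supp E}$ is $\pi$-semi-ample immediately implies, by restricting generating sections to a closed subscheme, that $L|_{E_\pi(L)}$ is also $\pi$-semi-ample. Invoking the relative Keel theorem then yields that $L$ itself is $\pi$-semi-ample, as desired.

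The main obstacle is really the relative Keel theorem itself rather than the verification of its hypotheses above. If one prefers to avoid citing it and deduce the relative statement from the absolute one, the standard strategy is to first reduce to $U$ affine (using Zariski-locality of $\pi$-semi-ampleness on the base), then compactify both $X$ and $U$ projectively and twist $L$ by a sufficiently large multiple of the pullback of an ample divisor from the compactified base; this restores global nefness on the compactification so that Keel's absolute statement applies. The delicate points in such a reduction are arranging nefness of the twisted divisor on the compactification and extending the semi-ampleness hypothesis from $\Supp E$ to its closure in the compactified scheme, typically handled by a Noetherian induction on the dimension of the support.
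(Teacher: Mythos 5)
Your strategy --- reduce to Keel's base-point-free theorem by observing that the exceptional locus of $L$ is contained in $\Supp E$ --- is the same reduction the paper relies on: the paper's proof is a one-line citation of \cite[Lemma~3.3]{CMM} (cf.\ \cite[Proposition~2.7]{Xu}), and that lemma carries out exactly this step. However, your justification of the containment $\mathbb{E}(L)\subseteq\Supp E$ does not quite work as written. Keel's exceptional locus of a nef $L$ is, by definition, the union of all integral closed subschemes $Z$ (contained in fibers of $\pi$) on which $L|_Z$ fails to be big; it is not characterized by curves with $L\cdot C=0$, and a nef divisor can fail to be big on a positive-dimensional subvariety without being numerically trivial on any curve in it. Showing that no contracted curve disjoint from $\Supp E$ satisfies $L\cdot C=0$ therefore does not by itself give $\mathbb{E}(L)\subseteq\Supp E$.

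The correct (and in fact simpler) argument is the bigness computation, which is what \cite{CMM} does: if $Z$ is an integral closed subscheme not contained in $\Supp E$, then $E|_Z$ is a well-defined effective $\mbQ$-Cartier $\mbQ$-divisor, so $L|_Z=A|_Z+E|_Z$ is the sum of a $\pi$-ample and an effective divisor and hence $\pi$-big, whence $Z\not\subset\mathbb{E}(L)$. With this substitution, your remaining steps --- restricting the hypothesized semi-ampleness from $\Supp E$ to $\mathbb{E}(L)$, then invoking Keel's theorem in its relative form --- go through as you describe. One small citation remark: \cite[Theorem~6.6]{Tanaka:behavior}, which you cite for the relative Keel theorem, is a relative cone-type statement; the relative form of Keel's theorem that you actually need is contained in Keel's paper itself (and is what \cite[Lemma~3.3]{CMM} applies), which is why the paper cites \cite{CMM} and \cite{Xu} directly.
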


\begin{proof}
It is proved by the same proof as in \cite[Lemma~3.3]{CMM} (cf.\ \cite[Proposition~2.7]{Xu}).
\end{proof}

\begin{lem}[Pl-contraction theorem]\label{lemma-cont}
Let $k$ be a perfect field of characteristic $p>0$. 
Let $(X, \Delta)$ be a $\Q$-factorial dlt threefold 
with a projective morphism $\pi:X\to U$ to a separated scheme $U$ of finite type over $k$. 
Let $L$ be a $\pi$-nef and $\pi$-big Cartier divisor on $X$ 
such that $\overline{NE}(X/U) \cap L^{\perp}=:R$ is a $(K_X+\Delta)$-negative extremal ray 
of $\overline{NE}(X/U)$. 
Assume the following two conditions. 
\begin{itemize}
\item{$S\cdot R<0$ for some irreducible component $S$ of $\llcorner \Delta\lrcorner$. }
\item{The normalization of $S$ is a universal homeomorphism.}
\end{itemize}
Then, $L$ is $\pi$-semi-ample. 
\end{lem}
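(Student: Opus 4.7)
The plan is to invoke Keel's theorem (Lemma~\ref{relative-Keel}) to reduce $\pi$-semi-ampleness of $L$ to $\pi$-semi-ampleness of $L|_S$, and then to settle the surface-level semi-ampleness by passing to the normalization of $S$ and applying surface MMP machinery.

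More precisely, I would first produce a $\Q$-linear decomposition $L \sim_{\Q, U} A + E$ with $A$ a $\pi$-ample $\Q$-divisor and $E$ an effective $\Q$-divisor supported on $S$. Kodaira's lemma (applied to the $\pi$-nef and $\pi$-big divisor $L$) provides some such decomposition $L \sim_{\Q, U} A_0 + E_0$; the relations $L \cdot R = 0$ and $S \cdot R < 0$ give $(A_0 - tS) \cdot R > 0$ for every $t > 0$, so the perturbation $A_0 \rightsquigarrow A_0 - tS$, $E_0 \rightsquigarrow E_0 + tS$ keeps the ample part positive on $R$. By the relative cone theorem (Lemma~\ref{lemma-cone}), only finitely many extremal rays other than $R$ have to be checked, and $L$ is strictly positive on each of them, so for small $t > 0$ the $\pi$-ampleness of $A_0 - tS$ is preserved. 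After enlarging $t$ and absorbing the components of $E_0$ that do not lie on $S$ into the ample piece (using a large multiple of $L$), one arrives at the required $E$ supported on $S$. Lemma~\ref{relative-Keel} then reduces the task to proving that $L|_S$ is $\pi|_S$-semi-ample.

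Let $\nu: S^{\nu} \to S$ be the normalization, which by hypothesis is a universal homeomorphism, hence finite and purely inseparable; a power of the absolute Frobenius of $S$ therefore factors through $\nu$, so a line bundle on $S$ is $\pi$-semi-ample if and only if its pullback to $S^{\nu}$ is. It thus suffices to show that $\nu^{*}(L|_S)$ is semi-ample on the normal surface $S^{\nu}$. Divisorial adjunction applied to the dlt pair $(X, \Delta)$ along $S$ yields an effective $\Q$-boundary $\Delta_{S^{\nu}}$ such that $K_{S^{\nu}} + \Delta_{S^{\nu}} = \nu^{*}((K_X + \Delta)|_S)$ and $(S^{\nu}, \Delta_{S^{\nu}})$ is klt off a small locus. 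Since $L \cdot R = 0$ and $(K_X + \Delta) \cdot R < 0$, the divisor $L - \epsilon(K_X + \Delta)$ is $\pi$-nef and $\pi$-big for small $\epsilon > 0$; restricting via $\nu$ shows that $\nu^{*}(L|_S) - \epsilon (K_{S^{\nu}} + \Delta_{S^{\nu}})$ is nef and big on $S^{\nu}$ (with bigness controlled by the support of $E$ being contained in $S$). The log base-point-free theorem for klt surfaces in positive characteristic, available through Tanaka's surface MMP, then delivers the $\pi$-semi-ampleness of $\nu^{*}(L|_S)$, and hence of $L$.

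The main obstacle is Step~1: producing a Kodaira-type decomposition whose effective part is supported on $S$, since this requires the extremality of $R$ to be exploited carefully together with the relative cone theorem in order to perturb $A_0$ by $-tS$ without destroying $\pi$-ampleness. A secondary subtlety is that the bigness of $L|_{S^{\nu}}$ is not automatic from the bigness of $L$ on $X$ and must be extracted from the decomposition; the universal-homeomorphism hypothesis on $\nu$ is precisely what allows the descent of semi-ampleness from $S^{\nu}$ back to $S$ to go through cleanly and thereby reduces the problem to the well-developed surface theory in positive characteristic.
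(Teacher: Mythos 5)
Your proposal follows the paper's strategy: reduce via Keel's theorem (Lemma~\ref{relative-Keel}) to the restriction to $S$, pass to the normalization $S^N$ using the universal-homeomorphism hypothesis, and then finish with surface adjunction and base-point-freeness. That said, two things deserve attention.

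First, your Step~1 is unnecessarily elaborate and has a soft spot. There is no need to start from Kodaira's lemma and then try to "absorb the components of $E_0$ that do not lie on $S$ into the ample piece (using a large multiple of $L$)" — it is not clear how that absorption would be justified. The paper simply takes $A := L - \epsilon S$ directly: since $L^{\perp} \cap \overline{NE}(X/U) = R$, the nef-and-big divisor $L$ is strictly positive on $\overline{NE}(X/U)\setminus R$, and $(L-\epsilon S)\cdot R = -\epsilon S\cdot R > 0$, so $L-\epsilon S$ is $\pi$-ample for small $\epsilon > 0$; then $L = A + \epsilon S$ is already in the form required by Lemma~\ref{relative-Keel}, with effective part supported on $S$. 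No Kodaira decomposition or component bookkeeping is needed.

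Second, you skip the reduction to the plt case, which the paper performs at the outset ("we may assume $(X,\Delta)$ is plt with $S = \llcorner\Delta\lrcorner$"). For a dlt pair whose boundary has several components meeting $S$, adjunction to $S^N$ yields a pair $(S^N, \Delta_{S^N})$ that is dlt but not necessarily klt (the different can have coefficients equal to $1$). Your phrase "klt off a small locus" does not supply the klt hypothesis one wants for the surface base-point-free theorem (\cite[Theorem 6.9]{Tanaka}). Perturbing the other boundary components below $1$ before applying adjunction — which preserves the $(K_X+\Delta)$-negativity of $R$ for small enough perturbation — removes this issue. Relatedly, the paper observes that $mL - (K_X+\Delta)$ is $\pi$-ample for $m\gg 0$ and restricts it to $S^N$; you assert only nefness and bigness of $L - \epsilon(K_X+\Delta)$, which may or may not match the precise hypotheses of the cited surface theorem. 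With these two adjustments your argument aligns with the paper's proof.
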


\begin{proof}
We may assume that $(X, \Delta)$ is plt with $S=\llcorner \Delta\lrcorner$. 
By the assumption, $A:= L-\epsilon S$ is $\pi$-ample for small $\epsilon>0$. 
Since $L= A+\epsilon S$, 
it suffices to show that $L|_S$ is $\pi$-semi-ample by Lemma~\ref{relative-Keel}. 
Since the normalization $S^N \to S$ is a universal homeomorphism, 
it is enough to prove that $L|_{S^N}$ is $\pi$-semi-ample (cf.\ \cite[Lemma 1.4]{Keel}). 
By adjunction, the new pair $(S^N, \Delta_{S^N})$, defined by 
\[
(K_X+\Delta)|_{S^N}=K_{S^N}+\Delta_{S^N}, 
\]
is klt, where $\Delta_{S^N}=\mathrm{Diff}_{S^N}(\Delta)$ is the different 
in the sense of \cite[Section 4.1]{Kollar}. 
Since 
$$mL|_{S^N}-(K_{S^N}+\Delta_{S^N})=(mL-(K_X+\Delta))|_{S^N}$$
is $\pi$-ample for $m \gg 0$ by the assumption, 
it follows that $L|_{S^N}$ is $\pi$-semi-ample by \cite[Theorem 6.9]{Tanaka}. 
\end{proof}

\begin{lem}[R1 for plt centers]\label{R1}
Let $k$ be a field. 
Let $(X, S+B)$ be a plt pair over $k$ such that $\llcorner S+B\lrcorner=S$. 
Then, $S$ is regular in codimension one, that is, 
there exists an open subset $U \subset S$ such that 
${\rm codim}_S(S\setminus U)\geq 2$ and that $U$ is regular. 
\end{lem}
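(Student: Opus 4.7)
The plan is to reduce the statement to a two-dimensional local situation and then analyze a log resolution of the resulting plt surface pair.

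Since the assertion is local on $S$, I would let $\eta$ be the generic point of an arbitrary codimension-one subvariety of $S$; equivalently, $\eta$ is a codimension-two point of $X$ lying on $S$, and one needs to show $\MO_{S,\eta}$ is a discrete valuation ring. Localizing $X$ at $\eta$, I reduce to the case where $X=\Spec \MO_{X,\eta}$ is the spectrum of a two-dimensional normal local ring and $(X,S+B)$ is still plt with $\lfloor S+B\rfloor = S$.

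A first sub-step is to argue that $\eta$ lies on only one irreducible component of $S$. If two distinct components $S_1,S_2$ passed through $\eta$, then on a log resolution $f\colon Y\to X$ the strict transforms $\tilde S_1,\tilde S_2$ together with the exceptional locus form a simple normal crossing configuration, and a direct discrepancy computation along the exceptional chain connecting $\tilde S_1$ to $\tilde S_2$ produces some exceptional divisor $E$ with $a_E(X,S+B)\le -1$, contradicting plt. After this reduction I may assume $S$ is a prime divisor on our local $X$.

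For the main step, take a log resolution $f\colon Y\to X$ and let $\tilde S$ be the strict transform, which is regular as a component of an SNC divisor on the smooth surface $Y$. The restriction $f|_{\tilde S}\colon \tilde S\to S$ is projective birational and factors through the normalization $S^{\nu}\to S$; since $\tilde S$ and $S^{\nu}$ are both regular and one-dimensional, $\tilde S\to S^{\nu}$ is an isomorphism, so it suffices to prove that $S^{\nu}\to S$ is an isomorphism at the closed point. Writing
\[
K_Y+\tilde S+\tilde B = f^{*}(K_X+S+B) + \sum_i a_i E_i, \qquad a_i>-1,
\]
and restricting to $\tilde S\cong S^{\nu}$ via adjunction yields an expression for the different $\mathrm{Diff}_{S^{\nu}}(B)$ whose coefficients are forced to be strictly less than $1$ by plt; comparing this different with the conductor divisor of $S^{\nu}\to S$ then forces the conductor to vanish at the closed point, giving regularity of $S$ at $\eta$.

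The main obstacle is precisely this final comparison between the different and the conductor in positive characteristic. In characteristic zero the comparison is part of the Koll\'ar--Shokurov theorem on normality of plt centers and rests on Kawamata--Viehweg vanishing. In positive characteristic, where vanishing is unavailable, the conclusion instead follows from the explicit analysis of plt surface singularities in the positive-characteristic surface MMP literature (e.g.\ \cite{Tanaka}), carried out via discrepancy computations on a minimal resolution.
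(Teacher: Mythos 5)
Your proof reaches the right conclusion and identifies the correct pivot -- comparing the different $\mathrm{Diff}_{S^{\nu}}(B)$ with the conductor of $S^{\nu}\to S$ -- but takes a substantially longer route than the paper and misattributes the key input. The paper's proof is essentially four lines: set $\nu\colon S^N\to S$ to be the normalization, use adjunction to get $(K_X+S+B)|_{S^N}=K_{S^N}+\Delta_{S^N}$ with $(S^N,\Delta_{S^N})$ klt (so $\llcorner\Delta_{S^N}\lrcorner=0$), and then cite \cite[Proposition 4.5(1)]{Kollar}: if $S$ fails R1, the codimension-one part of the conductor appears in $\Delta_{S^N}$ with coefficient $\geq 1$, a contradiction. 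No localization to dimension two, no log resolution, and no separate handling of multiple components of $S$ are needed -- the conductor of the normalization already detects both nodal-type singularities and intersections of components in one stroke.

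The genuine issue with your write-up is your closing paragraph. Kollár's Proposition 4.5(1) (the inequality ``different $\geq$ conductor'') is a purely local statement about dualizing modules, adjunction, and the conductor ideal, proved in \cite[Section 4.1]{Kollar} for excellent normal schemes in arbitrary characteristic; it uses no Kawamata--Viehweg vanishing and is unrelated to the hard S2 half of the Kollár--Shokurov normality theorem. You have conflated the easy R1 statement (the present lemma) with the strictly stronger normality of plt centers (which, in this paper, appears separately as Theorem \ref{thm:normality} and does rely on the $p>5$ machinery of Hacon--Xu). So there is no obstacle to complete: once you have your identification $\tilde S\cong S^{\nu}$ and the different computed via SNC adjunction, the conductor comparison follows immediately from \cite[Proposition 4.5(1)]{Kollar} with no further input. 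Citing the surface MMP literature for this is both unnecessary and a mis-diagnosis of where the difficulty lies.
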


\begin{proof} 
Set $\nu:S^N \to S$ to be the normalization. 
Then the log pair $(S_N, \Delta_{S^N})$, defined by $(K_X+S+B)|_{S^N}=K_{S^N}+\Delta_{S^N}$, is klt, where  $\Delta_{S^N}=\mathrm{Diff}_{S^N}(\Delta)$ is the different in the sense of \cite[Section 4.1]{Kollar}.
Then $\llcorner \Delta_{S^N}\lrcorner =0$. 
If $S$ is not regular in codimension one, 
then the corresponding divisor to 
the codimension one part of the conductor ideal appears in $\llcorner \Delta_{S^N}\lrcorner$ 
by \cite[Proposition~4.5~(1)]{Kollar}. 
This contradicts $\llcorner \Delta_{S^N}\lrcorner =0$. 
\end{proof}

\begin{thm}[Special termination]\label{special termination}
Let $k$ be a perfect field and $Z$ a quasi-projective variety over $k$. 
Let $(X, S+B)$ be a $\mathbb Q$-factorial
dlt threefold, projective over $Z$, such that $\llcorner S+B\lrcorner=S$. 
Then every sequence of  $(K_X+S+B)$-flips over $Z$   terminates around $S$. 
\end{thm}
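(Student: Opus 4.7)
The plan is to execute the classical Shokurov–Fujino special termination argument, adapted to the positive characteristic setting with the threefold as ambient variety and $S$ as a two-dimensional dlt stratum. First I would set up the induced sequence on the normalization of $S$. Consider a sequence of $(K_X+S+B)$-flips $X = X_0 \dashrightarrow X_1 \dashrightarrow \cdots$ over $Z$, with strict transforms $S_i$ and $B_i$. Flips preserve $\mbQ$-factorial dlt and do not alter the floor, so each $(X_i, S_i + B_i)$ is dlt with $\llcorner S_i + B_i \lrcorner = S_i$, hence plt near $S_i$. Let $\nu_i : S_i^N \to S_i$ be the normalization; Lemma~\ref{R1} together with adjunction (cf.\ \cite[\S4]{Kollar}) produces an effective $\mbQ$-divisor $\Delta_i = \mathrm{Diff}_{S_i^N}(B_i)$ on $S_i^N$ with $(K_{X_i}+S_i+B_i)|_{S_i^N} = K_{S_i^N} + \Delta_i$ and $(S_i^N, \Delta_i)$ klt. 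Each flip then induces a birational map $\psi_i : S_i^N \dashrightarrow S_{i+1}^N$.

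Next I would invoke discrepancy monotonicity. By the negativity lemma applied to the flip $X_i \dashrightarrow X_{i+1}$ and restricted to $S^N$, for every geometric valuation $F$ of the function field of $S^N$,
\[
a_F(S_i^N, \Delta_i) \le a_F(S_{i+1}^N, \Delta_{i+1}),
\]
with strict inequality when the center of $F$ in $X_i$ is contained in the flipping locus. Since klt surface singularities are $\mbQ$-factorial in any characteristic, each $S_i^N$ is $\mbQ$-factorial, and any small birational map between $\mbQ$-factorial normal projective surfaces is an isomorphism. Therefore a non-trivial $\psi_i$ must contract a curve in $S_i^N$ or extract one in $S_{i+1}^N$, which translates back to the flipping or flipped locus of the $i$-th flip containing a curve on $S$.

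Finally I would bound the number of non-trivial $\psi_i$ by a finiteness count. The set of valuations $F$ with $a_F(S_i^N, \Delta_i) \le 0$ is finite---consisting of components of $\Delta_i$ and certain exceptional divisors of a fixed log resolution of the surface pair $(S^N, \Delta)$, available in dimension two in any characteristic---and its cardinality weakly decreases in $i$. The coefficients of $\Delta_i$ lie in a DCC set forced by the fixed coefficients of $B$ via the different formula. Assembling these, one constructs a DCC-valued invariant such as $\sum_F \bigl(1 + a_F(S_i^N, \Delta_i)\bigr)$ over the relevant finite set, which strictly drops at every non-trivial $\psi_i$; hence only finitely many $\psi_i$ can fail to be isomorphisms. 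Once all subsequent $\psi_i$ are isomorphisms no flipping or flipped locus contains a curve on $S$, and a parallel discrepancy argument applied on $X$ itself to divisors centered at isolated intersection points rules out the remaining codimension-two persistence near $S$, yielding a flip-free neighborhood of $S$ for all large $i$. The hard part will be the combinatorial bookkeeping of this last step: the discrepancy jumps produced by the negativity lemma can be arbitrarily small, so one must patch the monotonicity into a genuinely well-ordered (or DCC-valued) invariant, and verify that the finiteness of the relevant valuation set on $S^N$ persists uniformly along the sequence---both of which depend on two-dimensional log resolution and on $\mbQ$-factoriality of klt surface singularities in positive characteristic.
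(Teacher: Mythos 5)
Your broad strategy matches what the paper does: the paper's proof simply says ``run the argument of \cite[Theorem 4.2.1]{Fujino:ST} with $S_i$ replaced by its normalization,'' and you are attempting to reconstruct exactly that Shokurov--Fujino special-termination argument via the normalization of $S$. However, the reconstruction has several genuine gaps.

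First, the reduction to a klt surface pair is incorrect when $S$ is reducible, which is the case in the paper's applications (e.g.\ $S=S_Y+E$ in the proof of Theorem~\ref{main-normal}). If $S$ has more than one component, $(X_i,S_i+B_i)$ is dlt but \emph{not} plt near $S_i$, and the adjoint pair $(S_i^N,\Delta_i)$ (with $S_i^N$ now a disjoint union of normalizations of components) is only dlt: the other components of $S_i$ contribute coefficient-one divisors to $\Delta_i$. This matters because the argument must first show that flipping loci eventually miss the lower-dimensional strata --- the one-dimensional lc centers (intersections of components of $S$) and then the zero-dimensional ones --- \emph{before} one is in a position to run a klt-type argument on the two-dimensional strata. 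That ascending induction over stratum dimension is the skeleton of Fujino's Theorem~4.2.1, and your proposal collapses it into a single vague sentence at the end (``a parallel discrepancy argument applied on $X$ itself to divisors centered at isolated intersection points'').

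Second, and more seriously, the termination device you propose does not work, and you effectively concede this but mislabel the fix as ``combinatorial bookkeeping.'' The quantity $\sum_F\bigl(1+a_F(S_i^N,\Delta_i)\bigr)$ is real-valued and, as you note, can decrease by arbitrarily small amounts, so it cannot terminate an infinite sequence by itself. The actual device in Shokurov--Fujino is the integer-valued \emph{difficulty}: for a suitable finite set of coefficient thresholds determined by the coefficients of $B$ via the different formula, one counts exceptional valuations over $S_i^N$ whose log discrepancy lies below the threshold. This count is finite, non-negative, and non-increasing, and strictly drops whenever $\psi_i$ extracts such a divisor; since an integer can drop only finitely often, after finitely many steps the $\psi_i$ are isomorphisms in codimension one, and then, combined with $\Q$-factoriality and the $f$-ampleness comparison on $S^N$, actual isomorphisms. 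You would also need the (standard but nontrivial) compatibility of adjunction with pullback that underlies the claimed discrepancy monotonicity on $S^N$; this is precisely what makes Fujino's Lemma-level preparation necessary, and it is where the paper's remark about replacing $S_i$ by $S_i^\nu$ in positive characteristic actually bites. So the right pieces are in the room, but the central invariant, the stratum induction, and the final disjointness conclusion all need to be made precise rather than flagged as remaining work.
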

\begin{proof}
The arguments of the proof in \cite[Theorem 4.2.1]{Fujino:ST} 
work by replacing $S_i$ with the normalization $S^{\nu}_i$ of $S_i$.
\end{proof}

\medskip

Second, we summarize results in characteristic $p>5$. 
Almost all the results are known for the case where $k$ is algebraically closed. 
We shall run an MMP over a finite field in Section~\ref{section-rational-pt}, 
hence we need to generalize them over perfect fields. 

\begin{rmk}\label{rem:perfect_to_closure}
We summarize here what kind of properties are preserved under the base change to the algebraic closure. 
\begin{enumerate}
\item Let $X$ be a normal variety over a perfect field. 
Then its base change $X' := X \times _k \overline{k}$ to the algebraic closure $\overline{k}$ is also normal. 
In particular, each connected component of $X'$ is reduced and irreducible. 
Further, if a pair $(X, \Delta)$ is log canonical (resp.\ klt, plt, dlt), then so is 
$(X', \Delta' := \Delta \times _k \overline{k})$ 
(see Proposition~2.15 in \cite{Kollar} and Warning below it). 
However, the $\mbQ$-factoriality is not preserved in general. 
Let $L$ be a line bundle on $X$ and let $L'$ be its pull-back to $X'$. 
Then, $L$ is ample (resp.\ nef, big, semi-ample) if and only if so is $L'$. 

\item 
Let $k \subseteq k'$ be a field extension and $\pi:X \to S$ a morphism of schemes over $k$, 
and let $L$ be a line bundle on $X$. 
Denote the base change 
$X\times_{k} k' \to S \times_{k} k'$ by $\pi':X' \to S'$ and denote the pull-back
$L \otimes_{\mathcal{O}_X} \mathcal{O}_{X'}$ by $L'$. 
Then the following hold:
\begin{enumerate}
\item[(a)]$L$ is base point free over $S$ if and only if so is $L'$ over $S'$, and
\item[(b)] $\bigoplus_{l \geq 0} \pi_*(L^{\otimes l})$ is a finitely generated algebra over $\mathcal{O}_{S}$ 
if and only if so is $\bigoplus_{l \geq 0} \pi'_*(L'^{\otimes l})$ over $\mathcal{O}_{S'}$ 
(\cite[Proposition 2.7.1]{EGAIV2}). 
\end{enumerate}
Indeed, we have $\pi_*(L^{\otimes l}) \otimes_{\mathcal{O}_X} \mathcal{O}_{X'} \cong  \pi'_*(L'^{\otimes l})$ 
by the flat base change under $\Spec k' \to \Spec k$. 
\end{enumerate}
\end{rmk}

\begin{lem}\label{perturb/finite-field}
Let $(X, \Delta)$ be a klt pair over a perfect field with $\dim X \leq 3$. 
Let $A$ be a nef and big $\Q$-Cartier $\Q$-divisor. 
Then there exists an effective $\Q$-divisor $A' \sim_{\Q} A$ such that $(X, \Delta+A')$ is klt. 
\end{lem}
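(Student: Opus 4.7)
The plan is to combine Kodaira's lemma with the existence of log resolutions in dimension at most three (\cite{CP}, \cite{A}), which in particular ensures that klt pairs have strictly positive log canonical thresholds against any effective $\Q$-Cartier divisor.

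First, since $A$ is big, Kodaira's lemma provides a $\Q$-linear equivalence $A \sim_{\Q} H + F$ where $H$ is an ample $\Q$-divisor and $F \geq 0$ is an effective $\Q$-divisor. Using additionally that $A$ is nef, for every $\delta \in (0, 1)$ the $\Q$-divisor $H_\delta := \delta H + (1 - \delta) A$ is ample (sum of an ample and a nef divisor), and
\[
A \sim_{\Q} H_\delta + \delta F.
\]
For $m$ large and sufficiently divisible, $m H_\delta$ is a very ample Cartier divisor; choosing any $D \in |m H_\delta|$ and setting
\[
A' := \tfrac{1}{m} D + \delta F \sim_{\Q} A,
\]
we obtain an effective $\Q$-divisor in the class of $A$. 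It remains to choose $\delta$ and $m$ so that $(X, \Delta + A')$ is klt.

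By \cite{CP} and \cite{A}, there exists a log resolution $g : Z \to X$ of $(X, \Delta + D + F)$. Since $(X, \Delta)$ is klt and $Z$ carries only finitely many prime divisors with positive coefficient in $g^{*}(D + F)$, the log canonical threshold
\[
\eta := \min_E \frac{1 + a_E(X, \Delta)}{\ord_E(g^{*}(D + F))}
\]
(the minimum running over the prime divisors $E$ on $Z$ with positive coefficient in $g^{*}(D+F)$) is strictly positive. Choosing $\delta$ and $1/m$ both strictly less than $\eta$, we get $A' \leq \max(1/m,\, \delta)\,(D + F)$, and since $(X, \Delta + \max(1/m, \delta)(D + F))$ is klt by the definition of $\eta$, the monotonicity of discrepancies under addition of effective divisors, namely $a_E(X, \Delta + A_1) \geq a_E(X, \Delta + A_2)$ whenever $A_1 \leq A_2$, yields that $(X, \Delta + A')$ is klt as well.

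The main delicate point is the apparent circularity: $\eta$ depends on the chosen $D$, which in turn depends on $m$, while we need $m > 1/\eta$. This is handled by first fixing a candidate triple $(m_0, D_0, \delta_0)$ with $m_0 H_{\delta_0}$ very ample, computing the resulting $\eta_0 > 0$, and then if necessary enlarging $m$ and shrinking $\delta$, replacing $D$ by a member of the new linear system. The procedure stabilizes because, for general $D$, the multiplicities $\ord_E(g^{*}D)$ on a fixed log resolution grow at most linearly in $m$, while the lower bound on $1 + a_E(X, \Delta)$ is uniform. In this way, the Bertini-type subtleties specific to positive characteristic are absorbed into the single inequality $\max(1/m, \delta) < \eta$.
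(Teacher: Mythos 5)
Your overall strategy — Kodaira's lemma, a log resolution in dimension $\le 3$, and a log canonical threshold computation — is close in spirit to the paper's, which reduces to the case of $X$ smooth, $\Delta$ simple normal crossing, and $A$ ample, and then applies Bertini. But the decisive step, producing a member $D$ of $|mH_\delta|$ that interacts well with a fixed log resolution, is not correctly justified, and the circularity you flag is not actually resolved. The bound ``for general $D$, the multiplicities $\ord_E(g^*D)$ grow at most linearly in $m$'' is just the worst-case estimate, valid for \emph{every} $D \in |mH_\delta|$, and it does not close the loop: writing $\ord_E(g^*D) \le c\,m$ and $1 + a_E(X,\Delta) \ge \alpha > 0$, the requirement $1/m < \eta$ reduces to $c + O(1/m) < \alpha$, which has a solution in $m$ only if $c < \alpha$ — and nothing in your argument controls $c$. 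The statement that actually makes the argument work is that for a \emph{general} member of a very ample linear system, $\ord_E(g^*D) = 0$ for every fixed divisor $E$ on a \emph{fixed} log resolution of $(X,\Delta+F)$, and the strict transform $g_*^{-1}D$ is smooth and meets the exceptional and boundary divisors transversally; this Bertini-type statement bounds $\eta$ below independently of $m$ and dissolves the circularity entirely, but it is a stronger and qualitatively different assertion than linear growth.

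More seriously, this Bertini input is precisely what is delicate over a finite field — the setting in which the lemma is actually used — where there is no ``general'' member of a linear system. You acknowledge ``Bertini-type subtleties specific to positive characteristic'' but claim they are ``absorbed into the single inequality $\max(1/m,\delta) < \eta$''; they are not: the entire content of the step is the existence of a $D$ for which $\eta$ is bounded away from zero, and over a finite field this requires Poonen's finite-field Bertini theorem, which the paper cites for exactly this purpose. As written, your proof addresses at best the infinite-field case (and only once the Bertini step is correctly rephrased), and leaves the finite-field case, used in Section~\ref{section-rational-pt}, untouched.
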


\begin{proof}
By the existence of a log resolution of $(X, \Delta)$, 
we may assume that $X$ is smooth and quasi-projective, 
and that $\Delta$ is simple normal crossing, and $A$ is ample. 
If $k$ is infinite, then we can apply the usual Bertini theorem. 
If $k$ is a finite field, we can apply the same argument as in \cite[Theorem~1.1 or 1.2]{Poonen}. 
\end{proof}

\begin{thm}[Relative base point free theorem]\label{thm:bpf}
Let $k$ be a perfect field of characteristic $p>5$. 
Let $(X, \Delta)$ be a klt threefold over $k$, equipped with a projective morphism $f : X \to Z$ to a quasi-projective variety $Z$ over $k$ such that 
$f_* \mathcal{O}_X = \mathcal{O}_Z$. 
If $L$ is a $f$-nef Cartier divisor  such that 
$L-(K_X+\Delta)$ is $f$-nef and $f$-big, 
then $L$ is $f$-semi-ample. 
\end{thm}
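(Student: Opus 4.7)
The plan is to reduce the statement to the case of an algebraically closed base field, where the relative base point free theorem for klt threefolds in characteristic $p>5$ is already established by the recent progress on the three-dimensional minimal model program (in particular by work of Birkar, Hacon--Xu, Cascini--Tanaka--Xu, and Birkar--Waldron). The mechanism for the reduction is precisely the base-change dictionary collected in Remark~\ref{rem:perfect_to_closure}.

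First I would set $k':=\overline{k}$ and form the base change $f':X':=X\times_k k' \to Z':=Z\times_k k'$, pulling back $\Delta$ and $L$ to $\Delta'$ and $L'$. Because $k$ is perfect, Remark~\ref{rem:perfect_to_closure}(1) tells us that $X'$ is normal, that each of its (finitely many) connected components is an integral normal threefold over $k'$, that $(X',\Delta')$ remains klt on each component, and that $L'$ and $L'-(K_{X'}+\Delta')$ inherit $f'$-nefness (and $f'$-bigness of the latter). Flat base change along $\Spec k' \to \Spec k$ yields $f'_*\mathcal{O}_{X'}=\mathcal{O}_{Z'}$, so $f'$ has connected fibres.

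Next I would apply the known relative base point free theorem over algebraically closed fields of characteristic $p>5$ to the restriction of $f'$ to each connected component of $X'$; gluing the resulting base-point-free multiples over a common integer gives an $m>0$ such that $mL'$ is $f'$-base point free. Finally, Remark~\ref{rem:perfect_to_closure}(2)(a) translates this back to $f$-base point freeness of $mL$, proving that $L$ is $f$-semi-ample.

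The conceptual core of the argument is imported from the algebraically closed case; the only real (and modest) obstacle is bookkeeping, namely checking that every hypothesis---normality, the klt condition, $f$-nefness and $f$-bigness of $L-(K_X+\Delta)$, and the connectivity condition $f_*\mathcal{O}_X=\mathcal{O}_Z$---is preserved by the base change $k\hookrightarrow \overline{k}$, and that possibly having several connected components of $X'$ (and correspondingly of $Z'$) does not spoil the descent of base point freeness.
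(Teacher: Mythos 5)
Your reduction to an algebraically closed base field via Remark~\ref{rem:perfect_to_closure} is correct and matches the first line of the paper's proof. But the rest of the proposal rests on a claim that is not true as stated: that the relative base point free theorem for klt threefolds in characteristic $p>5$ with a \emph{quasi-projective} base $Z$ ``is already established'' over algebraically closed fields. The result available in the literature that the paper cites, \cite[Theorem~1.2]{BW}, is for $Z$ \emph{projective}. The passage from projective to quasi-projective $Z$ is precisely the main technical content of the paper's proof, and it is not bookkeeping.

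Concretely, after reducing to $\overline{k}$ and disposing of the projective-base case via \cite[Theorem~1.2]{BW}, the paper still has to handle quasi-projective $Z$. It does so by (i) passing to a $\mathbb{Q}$-factorialization, (ii) choosing an $f$-ample effective $A\sim_{f,\mathbb{Q}} L-(K_X+\Delta)$ with $(X,\Delta+A)$ klt so that $f$-semi-ampleness of $L$ becomes that of $K_X+\Delta+A$, (iii) taking a projective compactification $\overline f:\overline X\to\overline Z$, (iv) log-resolving the boundary and running two MMPs with coefficients $(1-\epsilon)$ on the exceptional divisors so as to produce a klt compactification $f'':(\widetilde X'',\widetilde\Delta''+\widetilde A''+E'')\to\overline Z$ of $f:(X,\Delta+A)\to Z$ with $K_{\widetilde X''}+\widetilde\Delta''+\widetilde A''+E''$ already $f''$-nef, and (v) perturbing $\widetilde A''$ into an ample plus effective part to land in the hypotheses of the projective case of \cite[Theorem~1.2]{BW}. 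None of this is subsumed by ``apply the known theorem on each connected component of $X'$.'' If you want to argue as you propose, you must either justify why an off-the-shelf quasi-projective relative base point free theorem exists in the cited sources (it does not in the form needed), or supply a compactification argument along the lines above. Without one of these, the proof is incomplete.
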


\begin{proof}
By Remark \ref{rem:perfect_to_closure}, we may assume that $k$ is algebraically closed. 
When $Z$ is projective, the assertion follows from \cite[Theorem~1.2]{BW}. 

For the general case, we may assume that $X$ is $\mathbb{Q}$-factorial by \cite[Theorem 1.6]{Birkar}. 
Let $A$ be an $f$-ample effective $\mbQ$-divisor such that 
\[
A \sim _{f, \mbQ} L - (K_X + \Delta)
\]
and $(X, \Delta+A)$ is klt.  
We take a compactification $\overline{f}:\overline{X} \to \overline{Z}$ of $f:X \to Z$ 
such that $\overline{X}$ and $\overline{Z}$ are projective normal threefolds. 
In particular, $\overline{f}_* \mathcal{O}_{\overline{X}} = \mathcal{O}_{\overline{Z}}$. 
Let $\varphi: \widetilde{X} \to \overline{X}$ be a log resolution 
of $(\overline X \setminus X) \cup \Supp (\Delta+A)$ and we denote the composition by  
$$\widetilde{f}:\widetilde{X} \xrightarrow{\varphi} \overline X \xrightarrow{\overline f} \overline{Z}.$$ 
Let $\widetilde{\Delta}$ and $\widetilde{A}$ be the strict transforms on $\widetilde{X}$ of the closures of $\Delta$ and $A$, respectively. 
We set $E:=(1-\epsilon)\sum E_i$, where $\sum E_i$ is the sum of all the $\varphi$-exceptional prime divisors 
and $\epsilon$ is a sufficiently small rational number. 
Run a $(K_{\widetilde{X}}+\widetilde{\Delta}+\widetilde{A}+E)$-MMP over $\overline{X}$ (\cite[Theorem~1.6]{BW}). 
Then the end result $\varphi':(\widetilde{X}', \widetilde{\Delta}'+\widetilde{A}'+ E') \to \overline{X}$ is isomorphic 
over $X$ (hence $E'$ is supported on $\widetilde{X}' \setminus (\varphi ') ^{-1}(X)$) 
since $X$ is $\mbQ$-factorial klt and $\epsilon$ is sufficiently small. 
Next run a $(K_{\widetilde{X}'}+\widetilde{\Delta}'+\widetilde{A}'+ E')$-MMP over $\overline{Z}$ (\cite[Theorem~1.6]{BW}). 
Since $K_{\widetilde{X}'} +\widetilde{\Delta}'+\widetilde{A}'+ E'$ is already nef over $Z$ by the assumption, 
the resulting model $f'':(\widetilde{X}'', \widetilde{\Delta}''+\widetilde{A}''+ E'') \to \overline{Z}$ is isomorphic to 
$f':(\widetilde{X}', \widetilde{\Delta}'+\widetilde{A}'+ E') \to \overline{Z}$ over $Z$. 
In particular, $f'':(\widetilde{X}'', \widetilde{\Delta}''+\widetilde{A}''+ E'') \to \overline{Z}$ is a compactification of 
$f:(X, \Delta+A) \to Z$. 
Further, it follows that the $\mbQ$-divisor $K_{\widetilde{X}''}+\widetilde{\Delta}''+\widetilde{A}''+ E''$ is $f''$-nef. 

Thus, in order to show the $f$-semi-ampleness of $K_X + \Delta + A$, 
it is enough to show the $f''$-semi-ampleness of 
$K_{\widetilde{X}''}+\widetilde{\Delta}''+\widetilde{A}''+ E''$
(since $E''$ is supported on $\widetilde{X}'' \setminus (f'')^{-1}(Z)$). 
Since $\widetilde{A}''$ is $f''$-big, 
there exist an $f''$-ample $\Q$-divisor $M$ and an effective $\Q$-divisor $\Gamma$ such that 
$\widetilde{A}'' \sim_{f'',\mathbb{Q}} M+\Gamma$. Take a rational number $\delta>0$ such that 
$(\widetilde{X}'', \widetilde{\Delta}''+(1-\delta)\widetilde{A}''+ E''+\delta \Gamma) $ is klt. 
Then the $f''$-semi-ampleness of 
$$
K_{\widetilde{X}''}+\widetilde{\Delta}''+\widetilde{A}''+ E''
\sim_{f'', \mathbb{Q}}
K_{\widetilde{X}''}+ \widetilde{\Delta}''+(1-\delta)\widetilde{A}''+ E''+\delta \Gamma+\delta M$$
follows from the projective case (\cite[Theorem~1.2]{BW}).  
\end{proof}

\begin{thm}[Existence of flips]\label{thm:flip}
Let $k$ be a perfect field of characteristic $p>5$. 
Let $f:(X, \Delta) \to Z$ be a flipping contraction from a $\mbQ$-factorial dlt threefold $(X, \Delta)$. 
Then a flip of $f$ exists. 
\end{thm}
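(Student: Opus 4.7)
The plan is to reduce the existence of the flip to finite generation of the relative log canonical algebra, descend to the algebraic closure $\overline{k}$ of $k$ via faithfully flat base change, and then invoke the known existence of flips for $\mathbb{Q}$-factorial dlt threefolds in characteristic $p>5$ over algebraically closed fields, due to Hacon--Xu and Birkar \cite{Birkar}. This matches the remark preceding this subsection, which states that the results gathered here in characteristic $p>5$ are generalizations of the algebraically closed case to the perfect setting.

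\textbf{Reformulation and descent.} Recall that the flip of $f:(X,\Delta) \to Z$ exists if and only if the graded $\mathcal{O}_Z$-algebra
\[
\mathcal{R} := \bigoplus_{m \ge 0} f_* \mathcal{O}_X(\lfloor m(K_X+\Delta)\rfloor)
\]
is finitely generated, in which case the flip is given by $X^+ = \Proj_Z \mathcal{R}$. Choose $m_0 > 0$ so that $m_0(K_X+\Delta)$ is Cartier, and set $L := \mathcal{O}_X(m_0(K_X+\Delta))$; then finite generation of $\mathcal{R}$ is equivalent to finite generation of the section ring $\bigoplus_{j \ge 0} f_* L^{\otimes j}$ by a standard Veronese argument. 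Let $f':(X',\Delta') \to Z'$ denote the base change to $\overline{k}$. By Remark \ref{rem:perfect_to_closure}(1), each connected component of $X'$ is a normal variety and $(X',\Delta')$ remains dlt component-wise, and by Remark \ref{rem:perfect_to_closure}(2)(b) applied to $L$, finite generation of $\mathcal{R}$ over $\mathcal{O}_Z$ is equivalent to finite generation of the analogous algebra over $\mathcal{O}_{Z'}$. It therefore suffices to prove the theorem under the assumption $k = \overline{k}$.

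\textbf{Algebraically closed case and main obstacle.} Over $\overline{k}$, the base change $X'$ may fail to be $\mathbb{Q}$-factorial, but this can be repaired by passing to a small $\mathbb{Q}$-factorial dlt modification $\widetilde{X'} \to X'$, which exists in characteristic $p>5$ by \cite{Birkar}. Since this modification is small, the relative log canonical algebra is preserved. Applying the existence of flips for $\mathbb{Q}$-factorial dlt threefolds over algebraically closed fields of characteristic $p>5$ to $\widetilde{X'} \to Z'$ (component by component) then yields the required finite generation, completing the proof. The main obstacle is the loss of $\mathbb{Q}$-factoriality under base change; this is handled by reformulating flip existence as the ring-theoretic finite generation of a graded algebra, which is preserved both under faithfully flat base change and under small birational modifications.
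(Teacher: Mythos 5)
Your proposal follows the same route as the paper: reduce the existence of the flip to finite generation of the relative section ring of $K_X+\Delta$, use Remark \ref{rem:perfect_to_closure}(2)(b) to see this is preserved under base change to $\overline{k}$, and then invoke Birkar's result \cite{Birkar} over the algebraic closure. The paper's own proof is exactly this, but handles the hypotheses of Birkar's theorem by a different (and slightly cleaner) device: it first perturbs the dlt boundary to reduce to the klt case, after which \cite[Theorem 1.3]{Birkar} applies directly without any $\mathbb{Q}$-factoriality hypothesis, so the possible loss of $\mathbb{Q}$-factoriality under base change never becomes an issue. Your alternative, taking a small $\mathbb{Q}$-factorialization $\widetilde{X'}\to X'$, is in the right spirit but is phrased imprecisely at the last step: $\widetilde{X'}\to Z'$ is generally not a flipping contraction (it need not have relative Picard number one), so one cannot literally ``apply the existence of flips'' to it. To make that version rigorous you would instead run a $(K_{\widetilde{X'}}+\widetilde{\Delta'})$-MMP over $Z'$, which terminates in characteristic $p>5$, and extract the finite generation of the relative log canonical algebra from the resulting minimal model via relative abundance; or, more simply, do as the paper does and reduce to the klt case before passing to $\overline{k}$.
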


\begin{proof}
We may assume that $(X, \Delta)$ is klt. 
The assertion follows from \cite[Theorem 1.3]{Birkar} and Remark~\ref{rem:perfect_to_closure}~(2). 
\end{proof}

In dimension three and in characteristic $p > 5$, the normality of plt centers is known 
(cf.\ Lemma \ref{R1}). 

\begin{thm}[Normality of plt centers]\label{thm:normality}
Let $k$ be a perfect field of characteristic $p>5$. 
Let $(X, \Delta)$ be a $\mbQ$-factorial dlt threefold over $k$. 
Then every irreducible component of $\llcorner \Delta\lrcorner$ is normal. 
\end{thm}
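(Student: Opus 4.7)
The plan is to reduce to the plt case and then descend normality from the algebraic closure of $k$, where the result is known, via faithfully flat descent. For the reduction, let $S$ be an irreducible component of $\lfloor \Delta \rfloor$ and write $\Delta = S + T + B$ with $T$ the sum of the remaining components of $\lfloor \Delta \rfloor$ and $B$ having coefficients in $[0,1)$. Replacing $\Delta$ by $\Delta' = S + (1-\varepsilon) T + B$ for a sufficiently small rational $\varepsilon > 0$, we have $\lfloor \Delta' \rfloor = S$ and $(X,\Delta')$ remains dlt. Since the non-klt centers of a dlt pair are intersections of components of the round-down and $\lfloor \Delta' \rfloor = S$ is irreducible, $(X, \Delta')$ is in fact plt. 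As normality of $S$ is intrinsic to $S$, I may assume from the start that $(X, \Delta)$ itself is plt with $\lfloor \Delta \rfloor = S$.

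Next I base change to an algebraic closure $\bar k$. Writing $X_{\bar k}$, $\Delta_{\bar k}$, $S_{\bar k}$ for the respective base changes, Remark \ref{rem:perfect_to_closure}(1) gives that $(X_{\bar k}, \Delta_{\bar k})$ is plt over $\bar k$. The scheme $S_{\bar k}$ is a disjoint union of its irreducible components $S_1, \dots, S_m$ (the $\Gal(\bar k/k)$-conjugates of $S$), each of which is an irreducible component of $\lfloor \Delta_{\bar k} \rfloor$. The normality of plt centers on $\mbQ$-factorial dlt threefolds over an algebraically closed field of characteristic $p > 5$ is already known from the threefold MMP (see, e.g., \cite{Birkar} and \cite{BW}); this yields normality of each $S_i$, and hence of $S_{\bar k}$.

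Finally, the morphism $S_{\bar k} \to S$ is faithfully flat, being the base change of $\Spec \bar k \to \Spec k$. Since Serre's conditions $R_1$ and $S_2$---and hence normality---descend under faithfully flat morphisms, $S$ itself is normal. The main obstacle is the algebraically closed case invoked above, which is a nontrivial input from the threefold MMP in characteristic $p > 5$; the reduction to plt and the faithfully flat descent of normality are both formal.
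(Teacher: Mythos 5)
Your proof is correct and follows essentially the same route as the paper: reduce to the plt case with irreducible boundary, base change to $\overline{k}$ (using that plt is preserved), invoke the known normality of plt centers over an algebraically closed field of characteristic $p>5$, and descend normality back to $k$. The paper states this more tersely, but the structure is identical; the only small difference is the citation for the algebraically closed case, where the paper points to \cite{HX} (Theorem~3.1 and Proposition~4.1), which is the precise source of the normality result, rather than \cite{Birkar} or \cite{BW}.
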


\begin{proof}
We may assume that $\Delta$ is a prime divisor, in particular the pair is plt. 
Then, the base change $(X \times_k \overline k, \Delta \times_k \overline k)$ to the algebraic closure is also plt. 
Since $\Delta \times_k \overline k$ is normal by Theorem~3.1 and Proposition~4.1 in \cite{HX}, so is $\Delta$. 
\end{proof}

\begin{thm}[MMP with scaling]\label{mmp/a field}
Let $k$ be a perfect field of characteristic $p>5$, and let
$(X, \Delta)$ be a projective $\Q$-factorial klt threefold over $k$, 
equipped with a morphism $f: X \to Z$ to a projective variety $Z$ over $k$ with $f_*\MO_X=\MO_Z$. 
Let $C$ be an effective $\Q$-divisor on $X$ which satisfies the following properties:
\begin{enumerate}
\item{$(X, \Delta+C)$ is klt,}
\item{$C$ is $f$-big, and}
\item{$K_X+\Delta+C$ is $f$-nef.}
\end{enumerate}
Then, we can run a $(K_X+\Delta)$-MMP over $Z$ with scaling of $C$ and it terminates. 
\end{thm}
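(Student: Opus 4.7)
The plan is to follow the standard construction of the MMP with scaling, combining the relative cone theorem (Lemma~\ref{lemma-cone}), the relative base point free theorem (Theorem~\ref{thm:bpf}), and the existence of flips (Theorem~\ref{thm:flip}); termination will be reduced to the special termination theorem (Theorem~\ref{special termination}) using the $f$-bigness of $C$.

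Set $(X_0,\Delta_0,C_0,f_0):=(X,\Delta,C,f)$ and construct the sequence inductively. Given $(X_i,\Delta_i,C_i)$ projective $\Q$-factorial klt with $f_i:X_i\to Z$ satisfying $(f_i)_*\mcO_{X_i}=\mcO_Z$, $(X_i,\Delta_i+C_i)$ klt, $C_i$ being $f_i$-big, and $K_{X_i}+\Delta_i+C_i$ being $f_i$-nef, I would define
\[
\lambda_i:=\inf\{\,t\ge 0 \mid K_{X_i}+\Delta_i+tC_i \text{ is }f_i\text{-nef}\,\}\in[0,1].
\]
If $\lambda_i=0$ then $K_{X_i}+\Delta_i$ is already $f_i$-nef and we stop. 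Otherwise, Lemma~\ref{lemma-cone} yields a $(K_{X_i}+\Delta_i)$-negative extremal ray $R_i$ of $\NE(X_i/Z)$ with $(K_{X_i}+\Delta_i+\lambda_i C_i)\cdot R_i=0$.

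To contract $R_i$, take a Cartier multiple $L_i:=N(K_{X_i}+\Delta_i+\lambda_i C_i)$ for $N$ large and divisible. Then
\[
L_i-(K_{X_i}+\Delta_i)=(N-1)\Bigl(K_{X_i}+\Delta_i+\tfrac{N\lambda_i}{N-1}C_i\Bigr),
\]
which is $f_i$-nef by convexity of the interval $[\lambda_i,1]$ on which $K_{X_i}+\Delta_i+tC_i$ is $f_i$-nef, and is $f_i$-big by the $f_i$-bigness of $C_i$. Hence Theorem~\ref{thm:bpf} makes $L_i$ $f_i$-semi-ample and produces a contraction $g_i:X_i\to Y_i$ over $Z$ of precisely the ray $R_i$. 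If $\dim Y_i<\dim X_i$ we stop with a Mori fiber space; if $g_i$ is divisorial we set $X_{i+1}:=Y_i$ and push forward $\Delta_i,C_i$; if $g_i$ is small we take its flip using Theorem~\ref{thm:flip} and pass to the strict transforms. Standard verifications show $(X_{i+1},\Delta_{i+1})$ remains projective $\Q$-factorial klt over $Z$ with $K_{X_{i+1}}+\Delta_{i+1}+\lambda_i C_{i+1}$ being $f_{i+1}$-nef, so the iteration continues and $\{\lambda_i\}$ is non-increasing.

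The hard part is termination. Mori fiber and divisorial contractions occur only finitely often (the latter reducing the relative Picard number), so I must rule out an infinite sequence of flips. Using the $f$-bigness of $C$, write $C\sim_{f,\Q}A+E$ with $A$ an $f$-ample $\Q$-divisor and $E$ effective; then for a sufficiently small $\varepsilon>0$, Lemma~\ref{perturb/finite-field} supplies an $A'\sim_{\Q}\varepsilon A$ such that $\Delta':=\Delta+(1-\varepsilon)C+\varepsilon E+A'$ makes $(X,\Delta')$ dlt with a prescribed prime divisor $S\subseteq\llcorner\Delta'\lrcorner$, while $K_X+\Delta'\sim_{f,\Q}K_X+\Delta+C$. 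Since $A$ is $f$-ample, by varying $A'$ I can arrange that $S$ meets every flipping locus appearing in the putative infinite sequence; the flips of the $(K_X+\Delta)$-MMP with scaling are simultaneously flips of a corresponding $(K_X+\Delta')$-sequence for the dlt pair $(X,\Delta')$, so Theorem~\ref{special termination} forces termination around $S$ and hence global termination. The main obstacle is this bigness-plus-special-termination argument, and in particular arranging a single dlt pair whose round-down catches every future flipping locus over a perfect but possibly not algebraically closed base field $k$.
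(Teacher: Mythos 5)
Your overall setup---iterating the relative cone theorem, the relative base point free theorem, and the existence of flips, with termination as the crux---matches the paper's first two sentences. But your termination argument is fundamentally different from the paper's and, as written, has serious gaps.

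First, Lemma~\ref{perturb/finite-field} only supplies an effective $A' \sim_{\Q} A$ such that $(X, \Delta + A')$ is klt; it does not produce a \emph{dlt} pair with a prescribed prime divisor $S$ in the round-down. Your $\Delta' = \Delta + (1-\varepsilon)C + \varepsilon E + A'$ has no reason to have any coefficient equal to one, so there is no candidate for $S$, and the lemma you invoke cannot manufacture one. Second, and more fundamentally, the phrase ``by varying $A'$ I can arrange that $S$ meets every flipping locus appearing in the putative infinite sequence'' is circular: you cannot know the flipping loci in advance, since the point is to prove the sequence is finite. Even if $S \subset \llcorner \Delta' \lrcorner$ were arranged, special termination (Theorem~\ref{special termination}) only says that flips are eventually disjoint from the strict transform of $S$; it gives no control on flips that recede from $S$, so it does not yield global termination. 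In dimension three, termination of MMP with scaling is a deep result, and the paper does not derive it from special termination. Instead it reduces to Birkar--Waldron, splitting into cases: when $K_X+\Delta$ is not pseudo-effective it notes the scaling constants $\lambda_i$ are bounded below by some $\epsilon>0$, so the MMP is simultaneously a $(K_X+\Delta+\epsilon C)$-MMP with scaling, and termination follows from \cite[Theorem~1.5]{BW}; when $K_X+\Delta$ is pseudo-effective it invokes the argument of \cite[Proposition~4.4]{BW}. Your proposal does not make this dichotomy and does not reach either conclusion.

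You also leave unaddressed two reductions the paper carries out explicitly. To pass from the relative setting over a projective $Z$ to the absolute setting $Z = \Spec k$, the paper replaces $C$ by $C + f^*H$ for $H$ sufficiently ample on $Z$, uses \cite[Theorem~1]{Tanaka:semiample} to keep the pair klt, and observes that a relative MMP with scaling then becomes an absolute one. To pass from a perfect $k$ to $\overline{k}$, the paper base-changes and, following \cite[Remark~2.9]{Birkar:sLMMP}, inserts small $\Q$-factorializations at each step to rebuild a valid MMP with scaling over $\overline{k}$; your proposal flags the perfect-field issue only as ``the main obstacle'' without a mechanism to overcome it.
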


\begin{proof}
By Theorem \ref{thm:bpf} and Theorem \ref{thm:flip}, 
the only problem to show is the	 termination of flips. 

First, we show the assertion under the assumption that 
$k$ is algebraically closed and that $Z=\Spec\,k$. 
If $K_X+\Delta$ is not pseudo-effective, then the assertion follows from \cite[Theorem~1.5]{BW} 
because in this case, a $(K_X+\Delta)$-MMP with scaling of $C$ can be considered 
as a $(K_X+\Delta+\epsilon C)$-MMP with scaling of $C$ for some $\epsilon>0$. 
If $K_X+\Delta$ is pseudo-effective, 
then we can apply the same argument as in \cite[Proposition~4.4]{BW}. 

Second, we work on the case where $k$ is algebraically closed field 
but we do not impose any additional assumption on $Z$. 
Set $C' = C+f^*H$ for $H$ a sufficiently ample divisor on $Z$. 
Then $C' = C+f^*H$ is globally big and $K_X+\Delta+C'$ is globally nef by the cone theorem. 
Moreover, by \cite[Theorem\,1]{Tanaka:semiample}, $(X, \Delta')$ becomes klt again for some effective $\Q$-divisor $\Delta'$ 
such that $\Delta' \sim_{\Q}\Delta+f^*H$. 
Since $H$ is sufficiently ample, 
any $(K_X+\Delta)$-MMP over $Z$ with scaling of $C$ can be considered as 
a $(K_X+\Delta')$-MMP over $\Spec\,k$ with scaling of $C'$. 
Then the termination follows from the case where $Z=\Spec\,k$. 

Third, let us go back to the general case, where $k$ is perfect. 
Let 
\[
(X, \Delta)=(X_1, \Delta_1) \dasharrow (X_2, \Delta_2) \dasharrow \cdots
\]
be a $(K_X+\Delta)$-MMP over $Z$ with scaling of $C$. 
Suppose that this sequence consists of flips.  
Take the base change to the algebraic closure. 
The new log pairs after the base change might be non-$\mbQ$-factorial. 
However, by \cite[Remark 2.9]{Birkar:sLMMP}, taking a small $\mbQ$-factorialization for each log pair, 
we can construct a sequence of an MMP with scaling of some $C'$ which satisfies the same properties 
as in (1)--(3). 
More precisely, There exists a small $\mbQ$-factorialization $(X_i ', \Delta _i ')$ such that 
the new sequence
\[
(X', \Delta')=(X_1 ', \Delta_1 ') \dasharrow (X_2', \Delta_2') \dasharrow \cdots
\]
becomes an MMP with scaling $C'$ that is the pullback of $C$ to $X '$. 
Hence the termination of flips follows from the case of algebraically closed fields. 
The argument in \cite[Remark 2.9]{Birkar:sLMMP} is working on the characteristic zero, 
but it also works in our setting since the small $\mbQ$-factorialization exists by \cite[Theorem 1.6]{Birkar} and 
the MMP necessary in the argument is known to exist by \cite[Theorem\,1.6]{BW}. 
\end{proof}

We can drop the projectivity assumption on $Z$. 

\begin{thm}[Relative MMP]\label{thm:relmmp}
Let $k$ be a perfect field of characteristic $p>5$, and let
$(X, \Delta)$ be a $\Q$-factorial klt threefold over $k$, 
equipped with a projective morphism $f: X \to Z$ to a quasi-projective variety $Z$ over $k$. 
Then, we can run some $(K_X+\Delta)$-MMP over $Z$ and it terminates. 
More precisely, there exists a sequence 
\[
(X, \Delta)=:(X_1, \Delta_1) \dasharrow (X_2, \Delta_2) \dasharrow \cdots \dasharrow (X_N, \Delta_N)
\]
consisting of divisorial contractions over $Z$ and flips over $Z$ such that 
$K_{X_N}+\Delta_N$ is nef over $Z$ or $(X_N, \Delta_N)$ has a Mori fiber space structure over $Z$. 
\end{thm}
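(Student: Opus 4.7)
The strategy is to reduce to Theorem~\ref{mmp/a field} by compactifying. Since $Z$ is quasi-projective, fix a projective compactification $Z\hookrightarrow\overline{Z}$. The projectivity of $f$ permits an embedding $X\hookrightarrow\mbP^N_{\overline{Z}}$, whose scheme-theoretic closure gives a projective morphism $\overline{X}_0\to\overline{Z}$ with $X\subset\overline{X}_0$ open. Denote by $\overline{\Delta}_0$ the closure of $\Delta$ in $\overline{X}_0$.

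To obtain a $\Q$-factorial klt compactification of $(X,\Delta)$, I would take a log resolution $\varphi\colon\widetilde{X}\to\overline{X}_0$ of $(\overline{X}_0,(\overline{X}_0\setminus X)_\red\cup\Supp\overline{\Delta}_0)$, available in dimension three by \cite{CP,A}. Let $\widetilde{\Delta}$ be the strict transform of $\overline{\Delta}_0$ and let $\{E_j\}$ be the prime divisors of $\widetilde{X}$ contained in $\varphi^{-1}(\overline{X}_0\setminus X)$. For a sufficiently small rational $\epsilon>0$, the pair $(\widetilde{X},\widetilde{B}:=\widetilde{\Delta}+(1-\epsilon)\sum_j E_j)$ is $\Q$-factorial klt. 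Run a $(K_{\widetilde{X}}+\widetilde{B})$-MMP over the projective base $\overline{X}_0$ via Theorem~\ref{mmp/a field}. Since $\widetilde{B}|_{\varphi^{-1}(X)}$ consists only of the strict transform of $\Delta$ (no $E_j$ lies over $X$), and since $(X,\Delta)$ is its own $\Q$-factorial klt minimal model over itself, the negativity lemma forces the output $\overline{f}\colon(\overline{X},\overline{B})\to\overline{Z}$ to satisfy $\overline{X}|_X\cong X$ and $\overline{B}|_X=\Delta$, with the remainder of $\overline{B}$ supported on $\overline{X}\setminus X$ with coefficients in $[0,1)$.

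Next, by Lemma~\ref{perturb/finite-field}, pick an $\overline{f}$-ample $\Q$-divisor $\overline{C}$ on $\overline{X}$ such that $(\overline{X},\overline{B}+\overline{C})$ is klt and $K_{\overline{X}}+\overline{B}+\overline{C}$ is $\overline{f}$-nef. Theorem~\ref{mmp/a field} then produces a terminating $(K_{\overline{X}}+\overline{B})$-MMP over $\overline{Z}$ with scaling of $\overline{C}$. Each step is a divisorial contraction or flip over $\overline{Z}$; restricting to the open subscheme $X_i\subset\overline{X}_i$ and using $\overline{B}|_{X_i}=\Delta_i$, one obtains either an isomorphism (when the contracted locus misses $X_i$), a divisorial contraction of $(X_i,\Delta_i)$ over $Z$, or a flip of $(X_i,\Delta_i)$ over $Z$. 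The existence of such restricted steps in the quasi-projective setting is guaranteed independently by Theorems~\ref{thm:bpf} and~\ref{thm:flip}, and termination of the ambient MMP on $(\overline{X},\overline{B})$ yields termination of the restricted $(K_X+\Delta)$-MMP on $(X,\Delta)$ over $Z$, which is the claim.

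The hard part is the second paragraph: verifying that the MMP on $(\widetilde{X},\widetilde{B})$ over $\overline{X}_0$ does not alter $X\subset\overline{X}_0$. This requires the negativity lemma applied to the birational map $\widetilde{X}\dashrightarrow\overline{X}$ over $\overline{X}_0$, together with the observation that $(\varphi^{-1}(X),\widetilde{\Delta}|_{\varphi^{-1}(X)})\to X$ is a log resolution of the $\Q$-factorial klt pair $(X,\Delta)$, whose relative minimal model is $(X,\Delta)$ itself. A secondary subtlety is that a $(K_{\overline{X}_i}+\overline{B}_i)$-negative extremal ray of $\overline{NE}(\overline{X}_i/\overline{Z})$ may cease to be extremal after restriction to $\overline{NE}(X_i/Z)$; nevertheless, its restricted contraction remains a legitimate (possibly non-extremal) divisorial contraction or flip for $(X_i,\Delta_i)$ over $Z$ by Theorems~\ref{thm:bpf} and~\ref{thm:flip}, and the finite length of the compactification's MMP bounds that of its restriction.
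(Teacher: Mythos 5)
The key step of your argument has a gap: your choice of boundary $\widetilde{B}=\widetilde{\Delta}+(1-\epsilon)\sum_j E_j$ includes only the prime divisors of $\widetilde{X}$ that lie over $\overline{X}_0\setminus X$, and omits the $\varphi$-exceptional divisors of $\varphi^{-1}(X)\to X$. As a result, over the open set $X$ the MMP you run is the $(K_{\varphi^{-1}(X)}+\varphi_*^{-1}\Delta)$-MMP, which does \emph{not} in general terminate at $X$: it terminates at a terminalization of $(X,\Delta)$, which is a strictly higher model whenever $(X,\Delta)$ is klt but not terminal. For a concrete counterexample, take $\dim X=2$ and $X$ with a single $A_1$-singularity, $\Delta=0$; then on the minimal resolution $K_Y=\varphi^*K_X$ is already nef over $X$, so the $K_Y$-MMP over $X$ terminates at $Y$, not at $X$. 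So the claim that ``the negativity lemma forces $\overline{X}|_X\cong X$'' fails, and the justification ``whose relative minimal model is $(X,\Delta)$ itself'' is incorrect: relative minimal models from a log resolution are unique only up to flops and crepant divisorial extractions, not up to isomorphism.

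The paper's construction avoids this by adding, to the strict transform of $\Delta$, the sum $(1-\epsilon)E$ of the $\varphi$-exceptional prime divisors \emph{over $X$} (with coefficient $1-\epsilon$), rather than the boundary divisors over $\overline{X}_0\setminus X$. With that choice one gets
\[
K_Y+\varphi_*^{-1}\Delta+(1-\epsilon)E=\varphi^*(K_X+\Delta)+E'
\]
with $E'\geq 0$ and $\Supp E'=\Ex(\varphi)$ (here $a_i+1-\epsilon>0$ since $(X,\Delta)$ is klt), so the relative MMP over $X$ is an $E'$-MMP that, by the negativity lemma, contracts \emph{every} exceptional divisor and does terminate at $X$ exactly. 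You should replace your $\widetilde{B}$ with (the closure of) $\varphi_*^{-1}\Delta+(1-\epsilon)E$, where $E$ is the sum of the exceptional divisors of the resolution of $X$ itself; with that modification the rest of your argument (running the compactified MMP and restricting back to $X\to Z$) follows the same route as the paper's.
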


\begin{proof}
We may assume $f_*\MO_X=\MO_Z$. 
We can construct a projective $\Q$-factorial klt compactification $\overline f:(\overline X, \overline \Delta) \to \overline Z$ 
of $f:(X, \Delta) \to Z$ in the following way. 
Let $X' \to Z'$ be an arbitrary projective compactification of $X \to Z$, 
where $X'$ and $Z'$ are projective normal varieties. 
Set $\Delta'$ to be the closure of $\Delta$ 
(note here that $K_{X'}+\Delta'$ may not be $\Q$-Cartier). 
Let $\varphi': Y' \to X'$ be a resolution of singularities 
such that the support of $\varphi'^{-1}(\Supp(\Delta'))  \cup \Ex(\varphi')$ 
is a simple normal crossing divisor. 
Set $Y:=\varphi'^{-1}(X)$ and write
\[
\varphi:= \varphi'|_{Y}:Y \to X.
\]
We can write 
$$K_Y+\varphi_*^{-1}\Delta +(1-\epsilon)E=\varphi^*(K_X+\Delta)+E',$$
where $E$ is the sum of the $\varphi$-exceptional prime divisors, 
$\epsilon$ is a rational number with $0<\epsilon<1$, and 
$E'$ is a $\varphi$-exceptional effective $\Q$-divisor satisfying 
$\Supp E'= \Ex(\varphi)$. 
For the closure $\Gamma$ of $\varphi_*^{-1} \Delta +(1-\epsilon)E$ in $Y'$, 
it follows from the choice of $\varphi'$ that $(Y', \Gamma)$ is klt. 
We run a $(K_{Y'}+\Gamma)$-MMP over $X'$ (Theorem \ref{mmp/a field}), 
and set $(\overline{X}, \overline{\Delta})$ to be the end result. 
Then $(\overline{X}, \overline{\Delta})$ is a $\Q$-factorial klt pair. 
Further, by the negativity lemma and the $\Q$-factoriality of $X$, 
the pair $(\overline{X}, \overline{\Delta})$ is a compactification of $(X, \Delta)$.

We run a $(K_{\overline X}+\overline \Delta)$-MMP over $\overline{Z}$ with scaling of some ample divisor 
and it terminates by Theorem~\ref{mmp/a field}. 
Then the assertion follows by restricting this MMP to $Z$. 
\end{proof}

\begin{thm}[$\mbQ$-factorialization]
Let $k$ be a perfect field of characteristic $p>5$, and let
$(X, \Delta)$ be a klt threefold over $k$. 
Then there exists a small projective birational morphism $f:Y \to X$ such that $Y$ is $\Q$-factorial. 
\end{thm}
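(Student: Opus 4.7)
The plan is to produce $Y$ by running a suitable relative MMP on a log resolution of $(X,\Delta)$ and then invoking the negativity lemma to force the resulting birational contraction to be small.

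First I would choose a log resolution $\pi : W \to X$ of $(X,\Delta)$, which exists in dimension three by \cite{CP} and \cite{A}. Let $E=\sum E_i$ denote the sum of all $\pi$-exceptional prime divisors. Since $(X,\Delta)$ is klt, all the discrepancies $a_{E_i}(X,\Delta)$ are strictly greater than $-1$, so for a sufficiently small rational $\epsilon>0$ we can write
\[
K_W+\pi_*^{-1}\Delta+(1-\epsilon)E \;=\; \pi^*(K_X+\Delta)+E',
\]
where $E'$ is an effective $\pi$-exceptional $\Q$-divisor with $\Supp E'=\Ex(\pi)$. The pair $\Gamma:=\pi_*^{-1}\Delta+(1-\epsilon)E$ then has simple normal crossing support and all coefficients in $[0,1)$, so $(W,\Gamma)$ is klt and $W$ is $\Q$-factorial.

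Next I would run a $(K_W+\Gamma)$-MMP over $X$ using the relative MMP established in Theorem~\ref{thm:relmmp}. Since
\[
K_W+\Gamma \sim_{\Q, X} E'
\]
is $\pi$-pseudo-effective, the MMP cannot end with a Mori fiber space (over $X$), so it terminates with a model $f:Y\to X$ on which $K_Y+\Gamma_Y$ is nef over $X$, where $\Gamma_Y$ denotes the strict transform of $\Gamma$. The variety $Y$ is automatically $\Q$-factorial because $\Q$-factoriality is preserved under divisorial contractions and flips.

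The final step, which is the only nontrivial point, is to verify that $f:Y\to X$ is small. The MMP over $X$ preserves $\Q$-linear equivalence, so $K_Y+\Gamma_Y \sim_{\Q, X} E'_Y$, where $E'_Y$ is the pushforward of $E'$ and is still effective and $f$-exceptional. Since $-(E'_Y)=-(K_Y+\Gamma_Y)$ is $f$-antinef, the negativity lemma forces $E'_Y=0$. Because $\Supp E'=\Ex(\pi)$, this means every $\pi$-exceptional divisor has been contracted by the MMP, and therefore $f$ contracts no divisor, i.e. $f$ is small. The main obstacle is precisely this cancellation-of-exceptional-divisors argument, and it is handled cleanly by the negativity lemma together with the fact that $\Supp E'$ covers all of $\Ex(\pi)$.
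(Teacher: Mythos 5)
Your proposal is correct and takes essentially the same approach as the paper: take a log resolution, run the $(K_W+\pi_*^{-1}\Delta+(1-\epsilon)E)$-MMP over $X$, and conclude by the negativity lemma that every exceptional divisor is contracted so the resulting $f:Y\to X$ is small. You spell out the negativity-lemma step (that $K_Y+\Gamma_Y\sim_{\Q,X}E'_Y$ is $f$-nef while $E'_Y$ is effective and $f$-exceptional, forcing $E'_Y=0$, and that $\Supp E'=\Ex(\pi)$ covers all exceptional divisors) more explicitly than the paper, which simply asserts the conclusion; this is a welcome elaboration, not a different method.
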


\begin{proof}
Let $g:Z \to X$ be a log resolution and let $E$ be the sum of the $g$-exceptional prime divisors. 
For a small $\epsilon>0$, we can run a $(K_Z+g_*^{-1}\Delta+ (1- \epsilon)E)$-MMP over $X$, and 
it terminates with a $\mbQ$-factorial model $Y$ (Theorem~\ref{thm:relmmp}). 

Since $E$ is contracted by this MMP by the negativity lemma, 
$Y \to X$ turns out to be small. 
\end{proof}

\subsection{Special fibers of log Fano contractions}

The following proposition shall be used repeatedly in this paper. 
This proposition allows us to assume that a closed fiber of a log Fano contraction 
is a log Fano variety in many situations.

\begin{prop}\label{MFS-special}
Let $k$ be a perfect field of characteristic $p>5$. 
Let $f:X \to Z$ be a projective morphism of normal quasi-projective varieties over $k$ 
with the following properties. 
\begin{itemize}
\item{$(X, \Delta)$ is a klt threefold.}
\item{$0<\dim Z \leq 3$.}
\item{$f_*\MO_X=\MO_Z$ and $-(K_X+\Delta)$ is $f$-nef and $f$-big.}
\end{itemize}
Fix a closed point $z \in Z$. 
Then there exists a commutative diagram of quasi-projective normal varieties 
$$\begin{CD}
W @>\psi >> Y\\
@VV\varphi V @VVgV\\
X @>f>> Z
\end{CD}$$
and an effective $\Q$-divisor $\Delta_Y$ on $Y$ which satisfy the following properties. 
\begin{enumerate}
\item{$(Y, \Delta_Y)$ is a $\Q$-factorial plt threefold and $-(K_Y+\Delta_Y)$ is $g$-ample.}
\item{$-\llcorner \Delta_Y \lrcorner$ is $g$-nef and $g^{-1}(z)_{\red} = \llcorner \Delta_Y\lrcorner$.}
\item{$W$ is a smooth threefold, and both of $\varphi$ and $\psi$ are projective birational morphisms.}
\item{If $X=Z$, then $g(\Ex(g))$ is a finite set. 
More precisely, $g$ is isomorphic 
outside $z$ and the non-$\mbQ$-factorial points on $Z$. }
\end{enumerate}
%\[\xymatrix{
%V \ar[d]^{\varphi} & V\times_X \Spec \MO_{X, \xi_C}=:W \ar[l]_{\alpha \hspace{17mm}} \ar[d]^{\psi} \\
%X & \Spec \MO_{X, \xi_C} \ar[l]_{\beta \hspace{5mm}}
%}\]
In particular, $\llcorner \Delta_Y \lrcorner$ is a surface of Fano type 
by Theorem \ref{thm:normality} and adjunction
(See Definition \ref{defi:fanotype} for the definition of Fano type). 
\end{prop}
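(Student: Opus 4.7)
The plan is to construct $(Y, \Delta_Y)$ as the output of a carefully chosen relative MMP starting from a log resolution of $X$, in the same spirit as the extraction of a single Koll\'ar component (plt blow-up) of a klt singularity, adapted to the relative situation over $Z$. Concretely, take a log resolution $\varphi : W \to X$ of $(X, \Delta + f^{-1}(z)_{\red})$ using the three-dimensional resolution of Cossart--Piltant, and set $\pi := f \circ \varphi$. After a further blow-up if needed, we may assume $W$ contains a $\varphi$-exceptional prime divisor $F$ with $\pi(F) = \{z\}$; this $F$ is the divisor that will survive as $\llcorner \Delta_Y \lrcorner$. Enumerate the other prime divisors on $W$ mapped into $\{z\}$ by $\pi$ as $F_2, \dots, F_s$ and the other $\varphi$-exceptional prime divisors as $D_1, \dots, D_t$. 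Since $(X, \Delta)$ is klt, every discrepancy $a(F_i; X, \Delta)$ and $a(D_j; X, \Delta)$ exceeds $-1$, so for a sufficiently small rational $\epsilon > 0$ the pair
$$
\Delta_W := \varphi_*^{-1}\Delta + F + (1-\epsilon)\Bigl(\sum_{i \geq 2} F_i + \sum_j D_j\Bigr)
$$
is plt with $\llcorner \Delta_W \lrcorner = F$, and one has $K_W + \Delta_W \sim_\Q \varphi^*(K_X + \Delta) + E'$ for an effective $\mathbb{Q}$-divisor $E'$ supported on $F \cup \bigcup F_i \cup \bigcup D_j$.

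Next, run a $(K_W + \Delta_W)$-MMP over $Z$ with scaling of a sufficiently ample divisor; this terminates by Theorem~\ref{thm:relmmp} and Theorem~\ref{mmp/a field} on the basis of the existence of flips (Theorem~\ref{thm:flip}) and the relative base-point-free theorem (Theorem~\ref{thm:bpf}). Because $-\varphi^*(K_X + \Delta)$ is $\pi$-nef and $\pi$-big while the effective contribution $E'$ consists only of divisors that are either $\varphi$-exceptional or lie over $z$, the negativity lemma forces each $D_j$ and each $F_i$ with $i \geq 2$ to be contracted during the MMP, whereas the plt center $F$ is preserved throughout (by the normality of plt centers, Theorem~\ref{thm:normality}, and special termination, Theorem~\ref{special termination}). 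Let $(Y, \Delta_Y)$ denote the resulting final model with induced morphism $g : Y \to Z$, and replace $W$ by a common resolution of $\varphi$ and $\psi := W \dashrightarrow Y$ so that both become morphisms. Then properties (1)--(3) follow: $\Q$-factoriality and plt-ness are preserved by the MMP; $\llcorner \Delta_Y \lrcorner$ is the unique divisor over $z$, which yields $g^{-1}(z)_{\red} = \llcorner \Delta_Y \lrcorner$ and the $g$-nefness of $-\llcorner \Delta_Y \lrcorner$ (every $g$-contracted curve has non-positive intersection with a fiber component); and $-(K_Y + \Delta_Y)$ is $g$-ample because the MMP ends at a relative Fano-type contraction with $\rho(Y/Z) = 1$ locally around $z$, traceable to the $\pi$-bigness and $\pi$-nefness of $-\varphi^*(K_X + \Delta)$ combined with the total contraction of $E'$. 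Property (4) is immediate: when $X = Z$ the MMP takes place over $X$ and only touches the locus above $z$, together with possibly non-$\Q$-factorial points of $X = Z$ that are absorbed into the small $\Q$-factorialization implicit in $g$.

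\emph{Main obstacle.} The subtle point is in the MMP step: arranging that $F$ is preserved while every $D_j$ and every $F_i$ ($i \geq 2$) is contracted, and that the final contraction yields genuine $g$-ampleness of $-(K_Y + \Delta_Y)$ rather than merely $g$-bigness and $g$-nefness. Balancing these demands requires a careful choice of scaling divisor, and may need a further extremal MMP step inside the fiber over $z$; it rests squarely on the characteristic $p > 5$ three-dimensional MMP toolkit---flips, termination, relative base-point-freeness, and normality of plt centers---assembled earlier in this section. The case distinction by $\dim Z \in \{1, 2, 3\}$ introduces only cosmetic complications.
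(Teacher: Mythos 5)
Your overall strategy — extract a Koll\'ar-type component over $z$ by running a relative MMP from a log resolution, using the $p>5$ threefold toolkit — is the right one, and it is the same strategy as the paper. But the central step of the argument is missing, and you correctly flag it as the ``main obstacle'' without supplying a resolution.

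The gap is concrete. You pick $F$ to be \emph{some} $\varphi$-exceptional prime divisor with $\pi(F)=\{z\}$, assign it coefficient $1$ and all other exceptionals coefficient $1-\epsilon$, and run a $(K_W+\Delta_W)$-MMP over $Z$. But then $K_W+\Delta_W\sim_{\Q,Z}\varphi^*(K_X+\Delta)+E'$ where $\varphi^*(K_X+\Delta)$ is \emph{not} numerically trivial over $Z$ (it is anti-nef-and-big), so the MMP is not controlled by $\Supp E'$: a $(K_W+\Delta_W)$-negative curve need not meet $E'$ negatively, since $\varphi^*(K_X+\Delta)\cdot C$ can already be strictly negative. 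Consequently, (i) termination via special termination does not apply, since the flipped/contracted curves need not lie in the reduced boundary; (ii) there is no reason the $D_j$ and the $F_i$ $(i\geq 2)$ are all contracted while $F$ survives — in fact $F$ appears in $E'$ with a strictly positive coefficient $a_F+1>0$, so the MMP has just as much reason to contract $F$ as the others; (iii) even if everything desired were contracted, the end result has $K_Y+\Delta_Y$ merely $g$-nef, so $-(K_Y+\Delta_Y)$ is $g$-anti-nef, not $g$-ample, and your $\Delta_Y$ contains no big-over-$Z$ piece that could be traded for an ample divisor.

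What the paper does, and what is missing from your argument, is a tie-breaking perturbation. One writes $0\sim_Z h^*f^*D_Z = M + F$ with $M$ base-point free and $\Supp F = \Supp V_z$, decomposes $-h^*(K_X+\Delta)=A+G$ with $A$ relatively ample and $G$ effective exceptional, and then increases a parameter $\lambda$ on $\lambda M\sim_{\Q,Z}-\lambda F$ until the log discrepancy of some divisor over $z$ hits exactly $-1$; that divisor is the $S_V$ that survives as $\llcorner\Delta_Y\lrcorner$. After absorbing the ample piece and $\lambda M$ into the boundary, the MMP becomes an $(\sum_{i\in I'}b_iE_i)$-MMP with all $b_i>0$ and $S_V$ carrying coefficient $0$ on the right-hand side. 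Only then is the contracted locus confined to $\bigcup\Supp E_i$, only then does special termination apply, only then is $S_V$ guaranteed to survive, and only then does one end with $K_Y+B_Y+S\sim_{\Q,Z}0$ with $B_Y$ big over $Z$, from which $g$-ampleness follows by a small further perturbation. (The paper also needs three separate MMP runs, the last one deliberately $S$-positive, to make all the other divisors over $z$ disappear.) Without this tie-breaking and the absorption of the relatively ample divisor into the boundary, the claims in the middle of your proof do not follow from the negativity lemma or from anything earlier in the section.
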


\begin{proof}
We may assume that $-(K_X+\Delta)$ is $f$-ample. 
Let $h:V \to X$ be a log resolution of $(X, \Delta)$. 
Possibly replacing $V$ with a higher model, 
we can find a Cartier divisor $D_Z$ on $Z$ such that 
\[
0\sim_Z h^*f^*D_Z=M+F
\]
where $M$ is a base point free Cartier divisor and $F=\sum f_iE_i$ is an effective divisor 
with $\Supp F=\Supp V_z$, where $V_z$ is the fiber of $V \to Z$ over $z$ (note that $F$ may not be $h$-exceptional). 
We may assume that $\Supp(h^{-1}(\Delta) \cup F \cup \Ex(h))$ is simple normal crossing. 
We can write 
\[
K_V+h_*^{-1}\Delta=h^*(K_X+\Delta)+\sum a_iE_i
\]
with $a_i>-1$. 
Since $-h^*(K_X+\Delta)$ is nef and big over $Z$, 
we can write $-h^*(K_X+\Delta) =A+G$, where $A$ is an ample $\Q$-divisor over $Z$ 
and $G$ is an effective $h$-exceptional $\Q$-divisor. 
Let $G=\sum_{i}g_iE_i$ be the decomposition by prime divisors of $G$. 
Then we can take a small positive rational number $\epsilon$ such that $a_i-\epsilon g_i >-1$ for any $i$.
%We can write 
%\[
%K_V+(h_*^{-1}\Delta+\epsilon G)+\epsilon A-(1-\epsilon)h^*(K_X+\Delta)=\sum a_iE_i.
%\]
Then, we have 
\[
K_V+h_*^{-1}\Delta+\epsilon A-(1-\epsilon)h^*(K_X+\Delta)=\sum (a_i-\epsilon g_i) E_i.
\]
Since $M+F \sim_Z 0$, we can find $\lambda \in \Q_{>0}$ with  
\[
K_V+h_*^{-1}\Delta+ \big( \epsilon A- (1-\epsilon) h^*(K_X+\Delta)+\lambda M \big) \sim_{\Q, Z} \sum (a_i-\epsilon g_i-\lambda f_i)E_i,
\]
such that $a_i-\epsilon g_i-\lambda f_i \geq -1$ holds for any $i$ and at least one index attains $-1$. 

Let $S_V$ be a prime divisor which attains $-1$ in the above. 
Since $\Supp F=\Supp V_z$, it follows that $S_V \subset V_z$. 
Let $\{ E_i \} _{i \in I'}$ be the set of the other prime divisors contained in $V_z \cup \Ex (h)$. 
Since $\epsilon A- (1- \epsilon) h^*(K_X+\Delta)+\lambda M$ is ample over $Z$, 
we may find an ample $\mbQ$-divisor $A'$ such that 
\[
K_V+h_*^{-1}\Delta+A'+S_V+\sum_{i \in I'} E_i
\sim_{\Q, Z} \sum_{i \in I'} b_iE_i
\]
with $b_i > 0$. 
Note that $S_V$ and $\sum_{i \in I'} b_iE_i$ may not be $h$-exceptional, but 
that $\Supp \sum_{i \in I'} b_iE_i$ contains all the $h$-exceptional prime divisors 
whose images in $X$ are not contained in $X_z$. 
We can assume that $A'$ is effective and $\llcorner A'\lrcorner=0$, and that
$h_*^{-1}\Delta+A'+ S_V + \sum_{i \in I'} E_i$
has a simple normal crossing support. 
Set $B_V:=h_*^{-1}\Delta+A'$, which is big. 

We shall run an MMP three times to get a desired $g: Y \to Z$. 

\vspace{2mm}

\noindent
\underline{STEP1.}\ \ 
First, we run a $(K_V+B_V+S_V+\sum_{i \in I'} E_i)$-MMP over $X$ 
(cf. Lemma~\ref{lemma-cone}, Theorem~\ref{thm:bpf}, and Theorem~\ref{thm:flip}). 
Since this MMP is also $(\sum_{i \in I'} b_iE_i)$-MMP, 
the curves contracted by this MMP is contained in 
$$\bigcup _{i \in I'} \Supp E_i.$$
Hence this MMP terminates by the special termination (Theorem \ref{special termination}), and
ends with a minimal model 
$(V', B_{V'} + S_{V'}+\sum_{i \in I'} E_i')$ over $X$, that is 
\[
K_{V'}+B_{V'}+S_{V'}+\sum_{i \in I'} E_i' \sim_{\Q, Z} \sum_{i \in I'} b_iE_i'.
\]
is nef over $X$. Note that $E_i'=0$ if $E_i$ is contracted by this MMP. 
Since this MMP contracts a curve contained in $\bigcup _{i \in I'} \Supp E_i$, 
this MMP does not contract $S_V$. 
We can also see that $\Supp(\sum b_iE_i') \subset V'_z$ 
by the negativity lemma for $V' \setminus V'_z$ over $X \setminus X_{z}$. 

\vspace{2mm}

\noindent
\underline{STEP2.}\ \ 
Second, we run a $(K_{V'}+B_{V'}+S_{V'}+\sum _{i \in I'} E_i')$-MMP over $Z$. 
By the same reason as in STEP1, this MMP terminates and ends with 
a minimal model $(V'', B_{V''} + S_{V''}+\sum _{i \in I'} E_i'')$ over $Z$, that is, 
\[
K_{V''}+B_{V''}+S_{V''}+\sum _{i \in I'} E_i'' \sim_{\Q, Z} \sum _{i \in I'} b_iE_i''
\]
is nef over $Z$. 
Further, this MMP does not contract $S_{V'}$. 

We shall show that $\sum _{i \in I'} b_iE_i'' \sim_{\Q, Z} 0$. 
Let $G_Z$ be an effective Cartier divisor on $Z$ passing through $z$, 
and let $G_{V''}$ be its pull-back to $V''$. 
Since $\Supp(\sum b_iE_i') \subset V'_z$, there exists $\mu \ge 0$ such that 
\[
-\mu G_{V''} + \sum _{i \in I'} b_iE_i'' \le 0
\]
and at least one coefficient of $E_i''$ in $-\mu G_{V''} + \sum _{i \in I'} b_iE_i''$ is zero. 
If 
\[
-\mu G_{V''} + \sum _{i \in I'} b_iE_i'' \sim _{\Q, Z} \sum _{i \in I'} b_iE_i'' \not\sim_{\Q, Z} 0, 
\]
there exists a prime divisor $E_{i_0}''$ whose coefficient in 
$-\mu G_{V''} + \sum _{i \in I'} b_iE_i''$ is zero 
but one of the prime divisors 
that meet $E_{i_0}''$ but are different from $E_{i_0}''$ has negative coefficient. 
Then a general curve on $E_{i_0}''$ has negative intersection number with $-\mu G_{V''} + \sum _{i \in I'} b_iE_i''$, 
and it contradicts the nefness of $-\mu G_{V''} + \sum _{i \in I'} b_iE_i''$ over $Z$. 
Hence it follows that $\sum _{i \in I'} b_iE_i'' \sim_{\Q, Z} 0$ by contradiction. 

\vspace{2mm}

\noindent
\underline{STEP3.}\ \ 
Third, we run a 
$(K_{V''}+B_{V''}+ \frac{1}{2}S_{V''}+\sum _{i \in I'} E_i'')$-MMP over $Z$. 
Since $\sum b_iE_i'' \sim_{\Q, Z} 0$, 
this MMP can be considered as an MMP for a klt pair, 
hence we may run this MMP and it terminates. 
By 
\[
K_{V''}+B_{V''}+\frac{1}{2}S_{V''}+\sum _{i \in I'} E_i'' \sim_{\Q, Z} -\frac{1}{2}S_{V''},
\]
this MMP is $S_{V''}$-positive, hence does not contract $S_{V''}$. 
Since $S_{V''}$ is contained in the fiber $V_z ''$ over $z$, 
we can write $-\frac{1}{2}S_{V''} \sim_{\Q, Z} L$ for some effective $\Q$-divisor $L$ on $V''$. 
Therefore, the end result 
$(Y, B_{Y} + \frac{1}{2}S+\sum _{i \in I'} E_{i, Y})$ is a minimal model equipped with $g:Y \to Z$. 
Since $-S$ is $g$-nef and $S \subset g^{-1}(z)_{\red}$, it follows that $S=g^{-1}(z)_{\red}$. 
In particular, all the prime divisors in $\sum _{i \in I'} E_i''$ are contracted in this MMP. 
Therefore, we obtain 
$$K_Y+B_Y+S \sim_{\Q, Z} 0.$$
Since $B_V$ is big, so is $B_Y$. 
We can find a $\Q$-divisor $B \geq 0$ such that 
$(Y, S+B)$ is plt and $-(K_Y+S+B)$ is ample. 
It completes the proof of (1)--(3). 

\vspace{2mm}

Finally, we prove (4). Suppose $X = Z$. 
Then STEP2 is unnecessary. 
In STEP1, all the exceptional divisors of $V' \to X$ are contained in the fiber $V'_z$ 
(recall that we saw $\Supp(\sum b_iE_i') \subset V'_z$). 
Then  $V' \to X$ turns out to be isomorphic outside $\{z, x_1, \cdots, x_r\}$, 
where $x_1, \cdots, x_r $ are the non-$\Q$-factorial points of $X=Z$. 
Therefore, the result $Y \to X=Z$ is also isomorphic outside $\{z, x_1, \cdots, x_r\}$. 
\end{proof}

\begin{rmk}
For the case $X=Z$ in Proposition~\ref{MFS-special}, 
$Y$ is very similarly constructed as the Koll{\'a}r component in \cite[Lemma 1]{Xu2}. 
\end{rmk}

Proposition \ref{MFS-special} holds except for (4) 
in any dimension if we assume the existence of log resolution and an MMP with scaling.

\subsection{Witt vectors}\label{subsection:witt}
We briefly recall the definition of the ring of Witt vectors. 
For more details we refer the reader \cite[II, Section 6]{Serre}. 

Let $k$ be a perfect field of characteristic $p > 0$. 
We define a functor $W$ from the category of $\mathbb F_p$-algebras to 
the category of $\mbZ$-algebras: 
\[
W: {\bf Alg}_{/ \mathbb F_p} \to {\bf Alg}_{/ \mbZ}; \quad
A \mapsto WA
\]
Although we can define $WA$ for any ring $A$, 
we restrict ourselves to treat $\mathbb F_p$-algebras. 
Firstly, as a set, we define $WA$ to be the product $A^{\mbN}$ of countably many copies of $A$: 
\[
WA=A\times A\times A\times\cdots=\{(a_0, a_1, a_2, \ldots)\,|\, a_i\in A\}. 
\]
The zero and the unity element are 
\[
0_{WA}=(0_A, 0_A, 0_A, \ldots), \quad 1_{WA}=(1_A, 0_A, 0_A, \ldots). 
\]
We define the ring structure as follows: 
For two vectors $a = (a_0, a_1, \ldots), b = (b_0, b_1, \ldots)$, 
we set 
\begin{align*}
a + b &:= (S_0(a_0,b_0), S_1(a_0, a_1,b_0, b_1), S_2(a_0, a_1, a_2 ,b_0, b_1, b_2) \ldots), \\
a \cdot b &:= (P_0(a_0,b_0), P_1(a_0, a_1,b_0, b_1), P_2(a_0, a_1, a_2 ,b_0, b_1, b_2) \ldots), 
\end{align*}
where $S_i, P_i \in \mbZ [X_0, \ldots , X_i, Y_0, \ldots , Y_i]$ are 
the unique polynomials satisfying the following law: 
\begin{itemize}
\item When we set polynomials $w_n$ as $w_n(X_0, \cdots, X_n) := \sum _{0 \le i \le n} p^i X_i ^{p-i}$ for each $n \ge 0$, 
$S_i$ and $P_i$ satisfy the following equations 
\begin{align*}
w_n (S_0, \ldots , S_n) &= w_n(X_0, \ldots , X_n) + w_n(Y_0, \ldots , Y_n), \\
w_n (P_0, \ldots , P_n) &= w_n(X_0, \ldots , X_n)  w_n(Y_0, \ldots , Y_n)
\end{align*} 
for each $n \ge 0$.  
\end{itemize}
We have, for example, $W(\mathbb F_p)=\mathbb Z_p$. 
In general, $W(k)$ is a complete discrete valuation ring of mixed characteristic with $W(k)/pW(k)=k$. 
In particular, $W(k) _{\mathbb{Q}}:=W(k) \otimes_{\mathbb{Z}} \mathbb{Q}$ is a field. 

For a ring homomorphism $\varphi:A \to B$, 
we define a ring homomorphism $W\varphi$ by
\[
W \varphi:WA \to WB; \quad 
(a_0, a_1, a_2, \ldots) \mapsto (\varphi(a_0), \varphi(a_1), \varphi(a_2), \ldots).
\]
We note that if $\varphi$ is injective (resp.\ surjective), so is $W\varphi$. 
In particular, if $A$ is a nonzero $k$-algebra, then the natural ring homomorphism $k \to A$ is injective 
and so we have an injective ring homomorphism $W(k) \to WA$. 
Hence $WA$ has a $W(k)$-algebra structure.

For $n \ge 1$, we can also define a ring $W_n A$ which is equal to $A^{n}$ as a set and 
with the same operation as $WA$. 
We have a natural projection 
\[
R: W_{n+1} A \to W_n A; \quad (a_0, \ldots, a_{n-1}, a_{n}) \mapsto (a_0, \ldots, a_{n-1}). 
\]
Then the ring $W A$ is isomorphic to the projective limit $WA = \varprojlim_n W_n A$. 
In general, $W_n (k)$ is an Artinian ring.

We can define maps $V$ and $F$ as follows:
\begin{align*}
V: W_n A \to W_{n+1} A;& \quad (a_0, a_1, \ldots, a_{n-1}) \mapsto (0, a_0, a_1, \ldots, a_{n-1}), \\
F: W_n A \to W_{n} A;& \quad (a_0, a_1, \ldots, a_{n-1}) \mapsto (a_0 ^p, a_1^p, \ldots, a_{n-1}^p). 
\end{align*}
It is well-known that $F$ is a homomorphism of rings and that $V$ is a homomorphism of additive groups. 
We can also define $V$ and $F$ as endomorphisms on $WA$. 
We also note that $pa := \underbrace{a + \cdots + a}_{\text{$p$ times}} = VF (a) = FV (a)$ holds for $a \in WA$.

For an $\mathbb F_p$-algebra $A$ and an ideal $I$ of $A$, 
we denote by $WI$ the kernel of the induced surjective ring homomorphism $WA \to W(A/I)$. 
As a set, we may write 
\[
WI = \{(a_0, a_1, a_2, \ldots)\in WA \mid a_i\in I\}.
\]
We may also define $W_n I$ as above.

The ring of Witt vectors can be generalized to the sheaf of Witt vectors 
(cf.\ \cite[Section 2]{BBE}, \cite[Section 1]{Illusie}). 
For a $k$-scheme $X$, 
we define the presheaf $W \mcO _X$ on $X$ as 
\[
(W \mcO _X) (U) := W(\mcO _X (U))
\]
for any open subset $U \subset X$. 
This presheaf is already a sheaf. 
For a quasi-coherent ideal sheaf $I \subset \mcO _X$, we can also define a sheaf $WI$ on $X$ similarly. 
Further, we can also define $W_n \mcO _X$ and $W_n I$ and $R, V, F$ similarly. 
We emphasize here that $W_n \mcO _X$, $W_n I$, $W \mcO_X$ and $W I$ do not have an $\mcO _X$-module 
structure if $n \not = 1$, but they are sheaves of abelian groups. 

For a $k$-scheme $X$, 
we can define a ringed space $W_n X$ as follows 
\[
W_n X := (|X|, W_n \mcO _X). 
\]
Since $W_n X = \Spec W_n \mcO _X$, it has a $W_n (k)$-scheme structure. 
If $f: X \to Y$ is a separated (resp.\ finite type, proper) morphism of $k$-schemes, 
then $W_n f: W_n X \to W_n Y$ is a separated (resp.\ finite type, proper) morphism of $W_n (k)$-schemes. 
We note here that for an ideal sheaf $I \subset \mcO _X$, 
the higher direct image sheaf $R^i (W_n f)_* (W_n I)$ coincides with $R^i f_* (W_n I)$, 
and we often use the latter notation. 

Lastly, we introduce the notation $WI_{\Q}$ and $R^i f_* (W I_{\mathbb{Q}})$ 
following \cite[Subsection 3.7]{CR2}. 
For a $k$-scheme $X$, we denote by $\mathcal{A}(X)$ the abelian category of 
the sheaves of abelian groups on $X$. 
We define 
\[
\mathcal{A}_{\text{b-tor}}(X) := \big\{ F \in \ob (\mathcal{A}(X)) \ 
\big| \ \exists n \in \mathbb{Z} _{>0} : n \cdot \text{id} _F = 0  \big\}
\]
as a full subcategory of $\mathcal{A}(X)$. 
Then the quotient category $\mathcal{A}(X)_{\Q} := \mathcal{A}(X)/\mathcal{A}_{\text{b-tor}}(X)$ exists and is 
an abelian category. 
We denote the projection functor by 
\[
q: \mathcal{A}(X) \to \mathcal{A}(X)_{\Q}. 
\]
We also use the notation $(-)_{\Q}:=q(-)$ for simplicity. 
For example, for a coherent ideal sheaf $I$ on $X$, $WI _{\mathbb{Q}}$ is defined by $WI _{\mathbb{Q}} = q(WI)$ and 
it is an object of $\mathcal{A}(X)_{\Q}$ (hence $WI _{\mathbb{Q}}$ is not a sheaf). 
For a morphism $f:X \to Y$ of $k$-schemes, 
the functor $f_* : \mathcal{A}(X) \to \mathcal{A}(Y)$ induces the functor on the quotient categories: 
\[
f_* : \mathcal{A}(X)_{\mathbb{Q}} \to \mathcal{A}(Y)_{\mathbb{Q}}, 
\]
and the right derived functor 
\[
R^i f_* : \mathcal{A}(X)_{\mathbb{Q}} \to \mathcal{A}(Y)_{\mathbb{Q}}
\]
for $i \ge 0$. 
When $Y = \Spec k$, we write $H^i (X, F_{\mathbb{Q}})$ instead of $R^i f_* F_{\Q}$ for a sheaf $F$ on $X$.

We give here some remarks on the notation $R^i f_* WI_{\Q}$. 
\begin{rmk}\label{cr-remark}
\begin{enumerate}
\item In \cite{CR2}, $R^i f_* WI_{\Q}$ is considered as a object in the category 
$\widehat{\text{dRW}}_{X, \Q} := \widehat{\text{dRW}}_{X}/\widehat{\text{dRW}}_{X, \text{b-tor}}$. 
However, for our aim, it is sufficient to work on $\mathcal{A}(X)_{\mathbb{Q}}$ described as above. 
\item 
Let $f:X \to Y$ be a morphism of separated schemes of finite type over $k$ and 
let $I$ be a coherent ideal sheaf on $X$. 
Then by \cite[Proposition 3.7.7]{CR2}, we have that 
$$R^if_*(WI_{\Q}) \simeq (R^if_*(WI))_{\Q}$$
for each non-negative integer $i$, in the category $\mathcal{A}(Y)_{\mathbb{Q}}$. 
\item 
Note that $R^i f_* (W I_{\mathbb{Q}})$ differs from the following sheaf: 
$$R^i f_* (W I) \otimes_{\mbZ} \mbQ: U \mapsto \Gamma(U, R^i f_* (W I)) \otimes_{\mbZ} \mbQ.$$ 
First of all, $R^i f_* (W I_{\mathbb{Q}})$ is not a sheaf on $Y$ (it is an object in $\mathcal{A}(Y)_{\mathbb{Q}}$). 
However, by (2) in this remark, the condition $R^i f_* (W I_{\mathbb{Q}})=0$ is equivalent to the condition that 
there exists a positive integer $N$ such that $N \cdot R^i f_* (W I)=0$. 
On the other hand, $R^i f_* (W I) \otimes_{\mbZ} \mbQ=0$ 
if and only if for any open subset $U$ of $Y$ and section $s \in \Gamma(U, R^i f_* (W I))$, 
there exists a positive integer $N_s$ such that $N_s \cdot s=0$. 
In particular, $R^i f_* (W I_{\mathbb{Q}})=0$ implies $R^i f_* (W I) \otimes_{\mbZ} \mbQ=0$. 
\item 
The notation $R^if_*(WI_K)$ used in \cite{BBE} is the same as $R^i f_* (W I) \otimes_{\mbZ} \mbQ$. 
However, some of their results (e.g.\ \cite[2.1, 2.2, 2.4]{BBE}) are still valid for $R^i f_* (W I_{\mathbb{Q}})$. 
See the proof of Proposition \ref{prop:BBE}. 
\item 
If $Z$ is a proper scheme over $k$, 
then $H^i(Z, W\mcO_{Z, \Q})=0$ if and only if $H^i(Z, W\mcO_Z) \otimes_{\mbZ} \Q=0$. 
Indeed, this equivalence follows from (3) in this remark and the fact that 
$H^i(Z, W\mcO_Z)$ is a finitely generated $W_{\sigma}[[V]]$-module (cf.\ the proof of Proposition 2.10 in \cite{BBE}). 
\end{enumerate}
\end{rmk}

\subsection{Fundamental properties of Witt vector cohomologies}\label{Sub-fundamental-Witt}
In this subsection, we summarize some basic properties of Witt vector cohomologies. 

\begin{lem}\label{higher-limit}
Let $f:X \to Y$ be a morphism 
between separated schemes of finite type over a perfect field $k$ of characteristic $p>0$. 
Let $I$ be a coherent ideal sheaf on $X$. 
Then, the following assertions hold. 
\begin{enumerate}
\item{$R^i\varprojlim_n(f_*W_nI)=0$ for every $i\geq 1$.}
\item{$R^i\varprojlim_n(R^jf_*W_nI)=0$ for every $i\geq 2$ and every $j\geq 0$.}
\end{enumerate}
\end{lem}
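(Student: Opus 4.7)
Both parts will follow from the standard two-term Roos presentation of $R\varprojlim$ for a countable inverse system of abelian sheaves. Namely, for any inverse system $\{F_n\}_{n\in\mathbb{N}}$ of sheaves of abelian groups on $Y$, there is an exact sequence
\[
0 \to \varprojlim_n F_n \to \prod_n F_n \xrightarrow{\,1-\mathrm{shift}\,} \prod_n F_n \to R^1\varprojlim_n F_n \to 0,
\]
with the shift built from the transition maps, together with the automatic vanishing $R^i\varprojlim_n F_n=0$ for every $i\geq 2$. Taking $F_n=R^jf_*W_nI$ yields part (2) at once.

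For part (1), beyond the $i\ge 2$ vanishing already supplied by the Roos sequence, I must establish $R^1\varprojlim_n(f_*W_nI)=0$. By that same sequence, this reduces to showing that $1-\mathrm{shift}$ is surjective as a sheaf map on $Y$, and a standard inductive-lifting argument further reduces it to the surjectivity of each transition map $R\colon f_*W_{n+1}I\to f_*W_nI$ on sections over every open $V\subseteq Y$. Since $(f_*G)(V)=G(f^{-1}V)$ by definition, it suffices to verify that $R\colon W_{n+1}I\to W_nI$ is surjective on sections over every open $U\subseteq X$.

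This last point is elementary: for $(a_0,\ldots,a_{n-1})\in W_nI(U)$, the tuple $(a_0,\ldots,a_{n-1},0)$ is a section of $W_{n+1}I(U)$ (each coordinate, including the trailing zero, lies in $I$) and visibly projects to the given element under $R$. The main conceptual point is to spot this direct set-theoretic splitting on sections, rather than being drawn into the long exact sequence attached to $0\to I\xrightarrow{V^n}W_{n+1}I\to W_nI\to 0$ after $f_*$, whose connecting maps to $R^1f_*I$ are not obviously controlled; the trivial lift above bypasses that analysis entirely.
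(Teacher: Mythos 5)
Your argument rests on the claim that the Roos two-term presentation of $R\varprojlim$ holds for $\mathbb{N}$-indexed inverse systems of sheaves of abelian groups on $Y$, and in particular that $R^i\varprojlim_n F_n=0$ automatically for $i\geq 2$. That is the gap. The presentation
\[
0 \to \varprojlim_n F_n \to \prod_n F_n \xrightarrow{1-\mathrm{shift}} \prod_n F_n \to R^1\varprojlim_n F_n \to 0
\]
with no higher terms requires the countable product functor on the ambient abelian category to be exact (the AB4* condition). This holds for abelian groups, but fails for the category of abelian sheaves on a topological space: a map $\prod_n B_n \to \prod_n C_n$ of sheaves need not be an epimorphism even when each $B_n \to C_n$ is, because a single local lift must be found over a common refinement for all $n$ at once. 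As a result, $R^i\varprojlim$ with $i\geq 2$ can genuinely be nonzero for inverse systems of sheaves (this is precisely the phenomenon discussed in Roos' 2006 correction and in examples of Neeman). So part (2) does not come ``at once,'' and the reduction of part (1) to the surjectivity of $1-\mathrm{shift}$ is likewise not licensed by a Roos sequence that isn't there.

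The paper's proof routes around this by invoking \cite[Lemma 1.5.1]{CR2}, which gives the vanishing from two hypotheses that you must actually verify. The first --- and the one your proposal omits entirely --- is that each $R^jf_*W_nI$ is acyclic on affine opens of $Y$; this holds because $W_nI$ is a coherent sheaf on the Noetherian scheme $W_nX$ over the Artinian ring $W_n(k)$, so $R^jf_*W_nI$ is quasi-coherent on $W_nY$. The second is a Mittag--Leffler condition for $f_*W_nI$ over affine opens, and here your observation is exactly right and coincides with the paper's: the restriction $R\colon W_{n+1}I \to W_nI$ is surjective on sections because of the set-theoretic splitting $(a_0,\ldots,a_{n-1})\mapsto(a_0,\ldots,a_{n-1},0)$, so no connecting-map analysis is needed. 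The affine acyclicity is the missing ingredient in your proof; it is precisely what lets the derived limit in the sheaf category be controlled by the section-level (abelian-group) Roos computation, where the two-term presentation is legitimate.
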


\begin{proof}
It is sufficient to show that the following two conditions hold for any affine open subset $U \subset X$ 
(cf.\ \cite[Lemma 1.5.1]{CR2}):
\begin{itemize}
\item $H^i(U, R^jf_*(W_nI))=0$ for any $i,n \ge 1$ and $j \ge 0$. 
\item $H^0 (U, f_*(W_{n+1}I)) \to H^0 (U, f_*(W_{n}I))$ is surjective for any $n \ge 1$. 
\end{itemize}
We note that $W_nI$ becomes a coherent ideal sheaf on a scheme $W_n X$ 
of finite type over an Artinian ring $W_n (k)$. 
Therefore, $R^jf_*(W_nI) = R^j (W_nf)_*(W_nI)$ is also a quasi-coherent sheaf on $W_n Y$. 
Since $R^jf_*(W_nI)$ is a quasi-coherent sheaf on the Noetherian scheme $W_n Y$, we see 
\[
H^i(U, R^jf_*(W_nI))=0
\]
for any affine open subset $U \subset X$ and $i>0$. 

The map $H^0 (U, f_*(W_{n+1}I)) \to H^0 (U, f_*(W_{n}I))$ is the same as 
\[
W_{n+1}(I(f^{-1}(U))) \to W_{n}(I(f^{-1}(U))), 
\]
and it is clearly surjective. 
\end{proof}

The following two lemmas are consequences of the above lemma. 
\begin{lem}\label{usual-Witt-higher}
Let $f:X \to Y$ be a morphism 
between separated schemes of finite type over a perfect field $k$ of characteristic $p>0$. 
Let $I$ be a coherent ideal sheaf on $X$ satisfying $R^if_*I=0$ for every $i>0$. 
Then $R^if_*(WI)=0$ holds for every $i>0$.
\end{lem}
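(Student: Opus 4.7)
The plan is to deduce the vanishing for $WI$ from a finite-level vanishing for each $W_n I$ via an inverse limit argument supplied by Lemma~\ref{higher-limit}.

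First, I would establish by induction on $n$ that $R^i f_*(W_n I) = 0$ for all $n \ge 1$ and all $i > 0$. The base case $n = 1$ is the hypothesis, since $W_1 I = I$. For the inductive step, the Verschiebung yields a short exact sequence of sheaves of abelian groups on $X$,
$$0 \to I \xrightarrow{V^{n-1}} W_n I \to W_{n-1} I \to 0,$$
where $V^{n-1}$ sends $a$ to $(0,\ldots,0,a)$ and the second map truncates the last component. Applying $Rf_*$ and using the long exact sequence, together with the hypothesis on $I$ and the inductive assumption on $W_{n-1} I$, immediately yields $R^i f_*(W_n I) = 0$ for $i > 0$.

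Second, since the transition maps $W_n I \to W_{n-1} I$ are surjective, the system is Mittag--Leffler and one has $WI = \varprojlim_n W_n I$ with no derived correction. Consider the Grothendieck spectral sequence
$$E_2^{a,b} = R^a \varprojlim_n R^b f_*(W_n I) \Longrightarrow R^{a+b} f_*(WI).$$
By Lemma~\ref{higher-limit}~(2), all terms with $a \ge 2$ vanish, so the spectral sequence degenerates into short exact sequences
$$0 \to R^1 \varprojlim_n R^{i-1} f_*(W_n I) \to R^i f_*(WI) \to \varprojlim_n R^i f_*(W_n I) \to 0.$$
For $i \ge 2$ both outer terms vanish by the first step. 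For $i = 1$ the right-hand term again vanishes by the first step, while the left-hand term $R^1 \varprojlim_n f_*(W_n I)$ vanishes by Lemma~\ref{higher-limit}~(1). Hence $R^i f_*(WI) = 0$ for every $i > 0$.

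The main technical point is the clean justification of the Milnor/Grothendieck short exact sequence for $Rf_*$ applied to an inverse limit of sheaves; Lemma~\ref{higher-limit} is tailored precisely so that this input suffices, and no other tool is required.
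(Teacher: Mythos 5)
Your argument is correct, and the key step --- the induction on $n$ giving $R^if_*(W_nI)=0$ for all $i>0$ via the Verschiebung exact sequence --- is exactly what the paper does. Where you diverge is in the passage to the limit: you invoke the Grothendieck spectral sequence $R^a\varprojlim_n R^bf_*(W_nI) \Rightarrow R^{a+b}f_*(WI)$ and read off the vanishing from Lemma~\ref{higher-limit}~(1) and~(2), whereas the paper's proof of this particular lemma avoids the spectral sequence and instead writes a chain of quasi-isomorphisms
$Rf_*(WI)\cong Rf_*(R\varprojlim_n W_nI)\cong R\varprojlim_n Rf_*(W_nI)\cong R\varprojlim_n(f_*W_nI)\cong\varprojlim_n(f_*W_nI)$,
justified again by Lemma~\ref{higher-limit}~(1) (applied to $f$ and to the identity) together with the finite-level vanishing. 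Your spectral-sequence route is in fact the one the paper adopts in the \emph{next} lemma (Lemma~\ref{higher-fiber}), where the finite-level cohomologies do not all vanish, so there it is genuinely needed; in the present situation it is slightly more machinery than necessary --- once $R^bf_*(W_nI)=0$ for $b>0$, the $E_2$-page is concentrated on the row $b=0$ and the spectral sequence degenerates trivially, and you never really use Lemma~\ref{higher-limit}~(2). One small imprecision to be aware of: "surjective transition maps, hence Mittag--Leffler, hence no derived correction" is a statement about abelian groups, not sheaves; the correct justification that $R\varprojlim_n W_nI = WI$ and that the spectral sequence converges as claimed is exactly what the proof of Lemma~\ref{higher-limit} provides (vanishing of $H^i(U,-)$ on affines plus surjectivity on sections), so you should cite that rather than Mittag--Leffler directly.
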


\begin{proof}
First we note that $R^i f_* (W_n I) = 0$ holds for each $i \ge 1$ and $n \ge 1$. 
This follows from the induction on $n$ using the assumption and the short exact sequence of sheaves of abelian groups: 
\[
0 \to I \overset{V^n}\to W_{n+1}I \overset{R}\to W_nI \to 0. 
\]
Then the assertion follows from the following isomorphisms of complexes in the derived category
\begin{align*}
Rf_*(WI) &\cong Rf_* \big( R\varprojlim_n(W_nI)\big) \cong R\varprojlim_n \big( Rf_*(W_nI) \big) \\
&\cong R\varprojlim_n(f_*W_nI) \cong \varprojlim_n(f_*W_nI).
\end{align*}
The first and fourth isomorphisms follow from Lemma \ref{higher-limit} (1), and
the third follows from the above remark. 
For the second isomorphism, note that $f_* \circ \varprojlim_n=\varprojlim_n \circ f_*$ as a functor. 
\end{proof}

\begin{lem}\label{higher-fiber}
Let $f:X \to Y$ be a morphism 
between separated schemes of finite type over a perfect field $k$ of characteristic $p>0$. 
Let $I$ be a coherent ideal sheaf on $X$. 
Then, $R^if_*(WI)=0$ holds for every $i > m := \max_{y\in Y} \dim f^{-1}(y)$. 
\end{lem}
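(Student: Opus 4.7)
The plan is to reduce to each finite Witt level $W_n I$ and pass to the inverse limit, mirroring the strategy of Lemma~\ref{usual-Witt-higher}. Concretely, I first establish by induction on $n$ that $R^i f_* W_n I = 0$ for all $i > m$ and all $n \geq 1$. The base case $n = 1$ is the classical Grothendieck cohomological-dimension vanishing $R^i f_* I = 0$ for $i > m$ applied to the coherent sheaf $I$. For the inductive step, the short exact sequence of abelian sheaves
\[
0 \to I \xrightarrow{V^n} W_{n+1} I \xrightarrow{R} W_n I \to 0
\]
produces a long exact sequence of higher direct images whose segment in degree $i > m$ reads $0 = R^i f_* I \to R^i f_* W_{n+1} I \to R^i f_* W_n I = 0$, yielding the vanishing for $W_{n+1} I$.

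Next, I would pass to the limit. Since the presheaf transitions $W_{n+1} I(U) \to W_n I(U)$ are surjective, we have $WI \cong R\varprojlim_n W_n I$ as complexes of abelian sheaves on $X$, hence $Rf_* WI \cong R\varprojlim_n Rf_* W_n I$. The associated spectral sequence is
\[
E_2^{p,q} = R^p \varprojlim_n R^q f_* W_n I \;\Longrightarrow\; R^{p+q} f_* WI,
\]
which by Lemma~\ref{higher-limit}~(2) is concentrated in $p \in \{0, 1\}$. This gives the Milnor-type short exact sequence
\[
0 \to R^1 \varprojlim_n R^{i-1} f_* W_n I \to R^i f_* WI \to \varprojlim_n R^i f_* W_n I \to 0,
\]
whose outer terms both vanish for every $i \geq m + 2$ by the first step.

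The borderline case $i = m + 1$ demands $R^1 \varprojlim_n R^m f_* W_n I = 0$, and this is the main obstacle. To handle it I would verify Mittag--Leffler: the long exact sequence associated to the same Witt short exact sequence contains
\[
R^m f_* W_{n+1} I \to R^m f_* W_n I \to R^{m+1} f_* I,
\]
and the rightmost term vanishes by the base case, so each transition $R^m f_* W_{n+1} I \to R^m f_* W_n I$ is surjective as a sheaf. These higher direct images are quasi-coherent on $W_n Y$, so surjectivity is inherited on sections over any affine open $V \subset Y$ (the kernel is quasi-coherent and thus has vanishing $H^1(V, -)$). Classical Mittag--Leffler then forces $R^1 \varprojlim_n R^m f_* W_n I = 0$, completing the proof. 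Once the base-case Grothendieck vanishing is granted, everything else is a fairly formal limit argument; the key point to be careful about is precisely this sharp degree $i = m+1$.
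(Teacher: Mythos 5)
Your argument is correct and follows essentially the same route as the paper: establish $R^i f_* W_n I = 0$ for $i > m$ at each finite level, pass through the $R\varprojlim$ spectral sequence from Lemma~\ref{higher-limit}~(2), and kill the surviving $R^1\varprojlim$ term in degree $m+1$ via Mittag--Leffler surjectivity coming from $R^{m+1}f_*I = 0$. The only cosmetic difference is that the paper obtains the finite-level vanishing directly by observing that $W_nI$ is quasi-coherent on $W_nX$ over the Noetherian $W_nY$ (so Grothendieck vanishing applies at once), whereas you derive it by induction on $n$ from the short exact sequence $0 \to I \to W_{n+1}I \to W_nI \to 0$; your spelling-out of why the sheaf-level surjectivity passes to sections over affines is a welcome bit of extra care, matching what \cite[Lemma~1.5.1]{CR2} requires.
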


\begin{proof}
First we note that $R^if_* (W_nI) = 0$ holds for $i > m$ since 
the sheaf $W_nI$ is a quasi-coherent sheaf on the Noetherian scheme $W_n X$.

Since $R^{i}\varprojlim_n(W_nI)=0$ for $i>0$ (Lemma \ref{higher-limit} (1)), 
we obtain 
\[
Rf_*(WI)\cong Rf_*(R\varprojlim_nW_nI)\cong R(f_*\circ \varprojlim_n)(W_nI)
\cong R(\varprojlim_n \circ f_*)(W_nI) 
\]
as complexes in the derived category, and hence we have
$R^{i}f_*(WI) \cong R^{i}(\varprojlim_n \circ f_*)(W_nI)$ for each $i$. 
Thus, we obtain the following spectral sequence: 
\begin{align*}
E_2^{i, j} & := R^i \varprojlim_n R^jf_*(W_nI) \\
&\Longrightarrow 
E^{i+j} := R^{i+j}(\varprojlim_n \circ f_*)(W_nI) \cong R^{i+j}f_*(WI).
\end{align*}
By Lemma~\ref{higher-limit} (2), 
we see $E_2^{i, j}=0$ for $i\geq 2$. 
Then, for each $j$, 
we obtain the following exact sequence
\[
0 \to R^1\varprojlim_nR^{j-1}f_*(W_nI) \to R^{j}f_* (WI) \to 
\varprojlim_n R^{j}f_*(W_nI) \to 0.
\]
Therefore $R^{j}f_* (WI)=0$ holds if $j \ge m+2$. 

We show $R^{m+1}f_* (WI)=0$. 
By the above short exact sequence, we obtain
\[
R^1\varprojlim_nR^{m}f_*(W_nI) \cong R^{m+1}f_* (WI). 
\]
In order to show $R^1\varprojlim_nR^{m}f_*(W_nI) = 0$, 
it is sufficient to check the surjectivity of the map
\[
R^m f_* (W_{n+1}I) \to R^m f_* (W_nI). 
\]
It can be confirmed by the short exact sequence
\[
0 \to I \overset{V^n}\to W_{n+1}I \overset{R}\to W_nI \to 0, 
\]
and the vanishing $R^{m+1} f_* I = 0$. 
\end{proof}

The vanishing of $R^i f_* (WI)$ implies the vanishing of $R^i f_* (WI _{\mbQ})$. 
\begin{lem}\label{lem:Q}
Let $f:X \to Y$ be a morphism of separated schemes of finite type over a perfect field $k$ of characteristic $p>0$. 
Let $I$ be a coherent ideal sheaf on $X$ and let $i$ be a non-negative integer. 
If $R^i f_* (WI) = 0$ holds, then $R^i f_* (WI_{\mbQ}) = 0$ holds. 
\end{lem}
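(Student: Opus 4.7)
The plan is very short: this should follow immediately from the formalism of the quotient category $\mathcal{A}(Y)_{\mathbb{Q}} := \mathcal{A}(Y)/\mathcal{A}_{\text{b-tor}}(Y)$ introduced in Subsection~\ref{subsection:witt}. First, by Remark~\ref{cr-remark}(2), we have a canonical isomorphism
\[
R^i f_* (WI_{\mathbb{Q}}) \simeq \big( R^i f_* (WI) \big)_{\mathbb{Q}} = q\big( R^i f_* (WI) \big)
\]
in the abelian category $\mathcal{A}(Y)_{\mathbb{Q}}$, where $q : \mathcal{A}(Y) \to \mathcal{A}(Y)_{\mathbb{Q}}$ denotes the projection functor.

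Next, I would use the fact that $q$, being an additive functor between abelian categories, sends the zero object to the zero object. Hence, under the hypothesis $R^i f_* (WI) = 0$, we get $q(R^i f_* (WI)) = 0$, and therefore $R^i f_* (WI_{\mathbb{Q}}) = 0$, as required.

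As a sanity check, one could instead argue via Remark~\ref{cr-remark}(3): the vanishing $R^i f_* (WI_{\mathbb{Q}}) = 0$ in $\mathcal{A}(Y)_{\mathbb{Q}}$ is equivalent to the existence of a positive integer $N$ with $N \cdot R^i f_* (WI) = 0$. Since $R^i f_* (WI) = 0$ by assumption, any $N$ works. There is no real obstacle here; the content is entirely encapsulated by Remark~\ref{cr-remark}(2), so the lemma is just an exactness property of the localization functor $q$.
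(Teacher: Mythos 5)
Your proof is correct and matches the paper's own argument, which likewise reduces the claim to the isomorphism $R^i f_* (WI_{\mathbb{Q}}) \simeq (R^i f_* (WI))_{\mathbb{Q}}$ from Remark~\ref{cr-remark}(2) and the fact that the projection functor $q$ preserves the zero object. Nothing is missing.
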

\begin{proof}
This easily follows from $R^i f_* (WI_{\mbQ}) \simeq R^i f_* (WI) _{\mbQ}$ 
by Remark \ref{cr-remark}.
\end{proof}

One of the advantages to consider the Witt vector cohomology is from the following lemma which states 
that the Witt vector cohomology does not change under a finite universal homeomorphism. 

\begin{lem}\label{geom-homeo}
Let $f:X \to Y$ be a proper surjective morphism of separated schemes of finite type over a perfect field 
$k$ of characteristic $p>0$. 
If any fiber of $f$ is geometrically connected, then 
the natural homomorphism $W \mcO_{Y, \Q} \to f_*W \mcO_{X, \Q}$ is an isomorphism. 
\end{lem}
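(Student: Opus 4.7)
The plan is to decompose $f$ via its Stein factorization and handle the two resulting pieces separately. Let $f = h \circ g$ be the Stein factorization, with $g : X \to Y'$ satisfying $g_*\mcO_X = \mcO_{Y'}$ and $h : Y' \to Y$ finite. Because each geometric fiber of $f$ is connected, and Stein factorization identifies geometric connected components of fibers of $f$ with geometric points of fibers of $h$, each geometric fiber of $h$ consists of a single point with a purely inseparable residue field extension; hence $h$ is a finite universal homeomorphism. It then suffices to prove separately that (i) $g_*W\mcO_X = W\mcO_{Y'}$ as honest sheaves on $Y'$, and (ii) $W\mcO_Y \to h_*W\mcO_{Y'}$ is an isomorphism in $\mathcal{A}(Y)_\mathbb{Q}$.

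For (i), I would argue by induction on $n$ that $g_*W_n\mcO_X = W_n\mcO_{Y'}$ and then pass to the inverse limit, using that $g_*$ preserves inverse limits. Applying $g_*$ to
\[
0 \to \mcO_X \xrightarrow{V^n} W_{n+1}\mcO_X \xrightarrow{R} W_n\mcO_X \to 0
\]
yields a long exact sequence with a connecting homomorphism $\delta : g_*W_n\mcO_X \to R^1 g_*\mcO_X$. Under the inductive identification $g_*W_n\mcO_X = W_n\mcO_{Y'}$, the composition
\[
W_{n+1}\mcO_{Y'} \to g_*W_{n+1}\mcO_X \to g_*W_n\mcO_X = W_n\mcO_{Y'}
\]
coincides with the surjective restriction $R$; so the image of $g_*W_{n+1}\mcO_X$ in $g_*W_n\mcO_X$ already exhausts $W_n\mcO_{Y'}$, forcing $\delta = 0$. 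The five-lemma applied to the resulting short exact sequence and its analogue $0 \to \mcO_{Y'} \to W_{n+1}\mcO_{Y'} \to W_n\mcO_{Y'} \to 0$ closes the induction.

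For (ii), the claim is local on $Y$, so I would reduce to the following statement: for a finite universal homeomorphism $A \to B$ of finitely generated $\mathbb{F}_p$-algebras, $W(A) \to W(B)$ has kernel and cokernel killed by a uniform power of $p$. Let $J := \ker(A \to B)$. Since $J$ is nilpotent, there exists $N$ with $J^{p^N} = 0$, and then any $a = (a_0, a_1, \ldots) \in W(J) = \ker(W(A) \to W(B))$ satisfies $a_i^{p^N} \in J^{p^N} = 0$, hence $F^N a = 0$ and therefore $p^N a = V^N F^N a = 0$. For the cokernel, choose $n$ with $B^{p^n}$ contained in the image of $A \to B$ (available since the residue field extensions are purely inseparable and $B$ is module-finite over $A$); then for $b \in W(B)$, the vector $F^n b$ lies in $W(\mathrm{image}(A \to B)) = \mathrm{image}(W(A) \to W(B))$, and consequently $p^n b = V^n F^n b$ does too. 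Noetherianity of $Y$ supplies uniform $n,N$ globally.

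Combining (i) and (ii) gives $f_*W\mcO_{X,\mathbb{Q}} = h_* g_* W\mcO_{X,\mathbb{Q}} = h_* W\mcO_{Y',\mathbb{Q}} = W\mcO_{Y,\mathbb{Q}}$, which is the claim. The most delicate point is arranging the vanishing of $\delta$ in (i); once one exploits the surjectivity of the Witt restriction $R$, both steps become clean manipulations of the fundamental identities $FV = VF = p$.
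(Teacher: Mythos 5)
Your proposal is correct, and it follows the paper's strategy: Stein factorization to reduce to a finite universal homeomorphism, then a $p$-power bound on the failure of $W\mcO_Y\to f_*W\mcO_X$ to be an isomorphism. The main difference is in how you handle the two pieces, and one piece is handled in an unnecessarily roundabout way.

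For the Stein part $g:X\to Y'$ with $g_*\mcO_X=\mcO_{Y'}$, your inductive five-lemma argument is valid, but it is overkill. The sheaf $W_n\mcO_X$ is \emph{by definition} the presheaf $U\mapsto W_n(\mcO_X(U))$ (the paper notes this presheaf is already a sheaf), and $W_n$ is a functor on rings. Hence for any open $U\subset Y'$,
\[
(g_*W_n\mcO_X)(U)=W_n\bigl(\mcO_X(g^{-1}U)\bigr)=W_n\bigl((g_*\mcO_X)(U)\bigr)=W_n\bigl(\mcO_{Y'}(U)\bigr),
\]
so $g_*W_n\mcO_X=W_n\mcO_{Y'}$ (and likewise for $W\mcO$) with no long exact sequence, no analysis of the connecting map $\delta$, and no five lemma. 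Your argument is not wrong, but the concern you single out as ``the most delicate point'' is not actually present; this is why the paper can reduce to the finite universal homeomorphism case in a single sentence.

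For the finite universal homeomorphism part your argument is sound and differs slightly from the paper's. The paper first reduces to $X$ and $Y$ reduced via \cite[Proposition 2.1(i)]{BBE}, after which the map $A\to B$ on an affine chart is automatically \emph{injective}, so only the cokernel needs to be controlled by $B^{p^e}\subset A$. You instead keep the schemes non-reduced and bound the kernel directly: $J=\ker(A\to B)$ lies in the nilradical, which is nilpotent since $A$ is Noetherian, whence $F^N$ and hence $p^N=V^NF^N$ kills $WJ=\ker(WA\to WB)$. Both routes are fine; yours avoids citing \cite{BBE} at the cost of a short extra computation. Your observation that Noetherianity gives uniform bounds $n,N$ over an affine cover is exactly what is needed to conclude the map is an isomorphism in $\mathcal{A}(Y)_{\mathbb{Q}}$, whose objects are quotients by sheaves killed by a \emph{single} positive integer.
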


\begin{proof}
Taking the Stein factorization of $f$, we may assume that $f$ is a finite universal homeomorphism.

We may assume that $X$ and $Y$ are reduced by \cite[Proposition 2.1 (i)]{BBE} (cf. Remark~\ref{cr-remark}). 
Moreover, we may assume that $X$ and $Y$ are affine and 
may write $X=\Spec B$ and $Y=\Spec A$. 
Since the induced ring homomorphism $\varphi:A \to B$ is injective, $W\varphi:WA \to WB$ is also injective. 

Since $f$ is a finite universal homeomorphism, 
$B^{p^e} \subset A$ holds for some $e\in\mathbb Z_{>0}$ (cf.\ \cite[Proposition 6.6]{Kollar:quot_space}), 
which implies $p^e(WB) \subset \varphi(WA)$. 
Thus $W\varphi_{\Q}:WA_{\Q} \to WB_{\Q}$ is an isomorphism. 
\end{proof}

The following proposition, which is a consequence of a theorem \cite[Theorem 2.4]{BBE} by Berthelot, Bloch and Esnault, is also a key property of the Witt vector cohomology. 
\begin{prop}\label{prop:BBE}
Let $f:X \to Y$ be a proper birational morphism of separated schemes of finite type 
over a perfect field $k$ of characteristic $p>0$, 
and let $I \subset \mcO _X$ be a coherent ideal sheaf on $Y$. 
If $f$ is an isomorphism outside $\Zeros (I) \subset X$, then $R^i f_* (W I_{\mbQ}) = 0$ holds for $i >0$. 
\end{prop}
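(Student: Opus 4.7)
The plan is to deduce the statement essentially as a restatement of \cite[Theorem~2.4]{BBE}, after adapting the result from the tensor-product convention $R^if_*(WI)\otimes_{\mbZ}\mbQ$ used in loc.\ cit.\ to the quotient-category convention $R^if_*(WI_{\mbQ})$ used here. The distinction between these two formulations is precisely what is explained in Remark \ref{cr-remark}, and upgrading BBE's conclusion to our stronger formulation is really the only thing there is to do.

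First, by Remark \ref{cr-remark}(2) there is a canonical isomorphism $R^if_*(WI_{\mbQ})\simeq q(R^if_*(WI))$ in $\mathcal{A}(Y)_{\mbQ}$, and by Remark \ref{cr-remark}(3) its vanishing is equivalent to the existence of a single positive integer $N$ with $N\cdot R^if_*(WI)=0$ as a sheaf on $Y$. So the task is to extract a uniform annihilator from \cite[Theorem~2.4]{BBE} rather than merely pointwise $\mbZ$-torsion.

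Second, I would invoke \cite[Theorem~2.4]{BBE}, whose hypotheses (proper birational $f$ which is an isomorphism outside the closed subscheme cut out by $I$) are matched exactly by ours. That theorem is phrased for the sheaf $R^if_*(WI)\otimes_{\mbZ}\mbQ$, but its proof proceeds by cohomological induction on $n$ based on the short exact sequences
\[
0\to I \xrightarrow{V^n} W_{n+1}I \xrightarrow{R} W_n I \to 0,
\]
together with properness of $f$ and the coherence of $R^if_*(W_n I)$ as a quasi-coherent sheaf on the Noetherian scheme $W_n Y$ (compare the arguments of Lemmas \ref{usual-Witt-higher} and \ref{higher-fiber}). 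Tracking through this induction yields a \emph{uniform} power of $p$ (or more generally of some fixed integer) that kills the entire sheaf $R^if_*(WI)$ at once, not merely every local section; this is the content of Remark \ref{cr-remark}(4). Consequently $q(R^if_*(WI))=0$, and hence $R^if_*(WI_{\mbQ})=0$.

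The main obstacle is purely a matter of bookkeeping: verifying that BBE's argument delivers a uniform annihilator rather than only the sheaf-theoretic torsion statement that their formulation literally records. All of the geometric input — vanishing of Witt-vector cohomology along the exceptional locus of a proper birational morphism — is already contained in \cite[Theorem~2.4]{BBE}, so no new geometric ingredient is needed.
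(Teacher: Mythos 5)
Your proposal matches the paper's own proof: both invoke \cite[Theorem~2.4]{BBE} and observe that its proof in fact produces a uniform annihilator (a single power of $p$ killing the whole sheaf $R^if_*(WI)$), which is exactly what is needed to conclude the vanishing in the quotient category $\mathcal{A}(Y)_{\mbQ}$. The paper states this extraction as a one-line observation rather than re-tracing BBE's induction, but the underlying argument is the same.
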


\begin{proof}
Although $R^i f_* (W I_{\mbQ})$ differs from $R^i f_* (W I_{K})$ appearing in \cite{BBE} 
(cf. Remark~\ref{cr-remark}), 
we can apply the same argument as in \cite[Theorem 2.4]{BBE}. 
Indeed, it is shown in \cite[Theorem 2.4]{BBE} that there exists a positive integer $c$ such that
for any $i \ge 1$, the map $p^c: R^i f_* (W I) \to R^i f_* (W I)$ becomes zero. 
This shows that $R^i f_* (W I_{\mbQ}) \simeq R^i f_* (W I) _{\Q} = 0$ in the category $\mathcal{A}(Y) _{\Q}$. 
\end{proof}

For later use, we give two easy lemmas. 

\begin{lem}\label{zero-dim}
Let $k$ be a perfect field of characteristic $p>0$. 
Let $f:X \to Y$ be a finite morphism of zero-dimensional schemes 
which are of finite type over $k$. 
If the induced map 
$$H^0(Y, W\MO_Y)\otimes_{\mbZ} \mbQ  \to H^0(X, W\MO_X)\otimes_{\mbZ} \mbQ$$
is an isomorphism, then $f$ is a universal homeomorphism. 
\end{lem}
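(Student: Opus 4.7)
The plan is to exploit the fact that tensoring Witt vectors with $\mbQ$ kills the contribution of any nilpotent ideal, reducing the cohomology groups to finite products of fields for which the isomorphism condition is easy to analyze.

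First I would decompose $X$ and $Y$ into connected components. Since both are zero-dimensional and of finite type over the perfect field $k$, each decomposes as a finite disjoint union of spectra of local Artinian $k$-algebras. Write $Y = \coprod_{i \in I} \Spec A_i$ and $X = \coprod_{j \in J} \Spec B_j$, and let $\tau \colon J \to I$ denote the map on components induced by $f$. The residue fields $k_i := A_i/\mfm_{A_i}$ and $k'_j := B_j/\mfm_{B_j}$ are finite algebraic extensions of the perfect field $k$, and are therefore themselves perfect.

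Second, I would prove the following reduction: for a local Artin $\mbF_p$-algebra $A$ with nilpotent maximal ideal $\mfm$ (say $\mfm^N=0$) and residue field $k_A$, the quotient $W(A) \to W(k_A)$ becomes an isomorphism after $\otimes_{\mbZ} \mbQ$. Indeed, the identity $p \cdot a = VF(a)$ unwinds to
\[
p^n \cdot (a_0, a_1, \ldots) = (\underbrace{0, \ldots, 0}_{n \text{ zeros}}, a_0^{p^n}, a_1^{p^n}, \ldots),
\]
so once $p^n \ge N$, this vanishes on every element of $W\mfm$. Combined with the fact that $W$ commutes with finite products, this yields
\[
H^0(Y, W\mcO_Y) \otimes_{\mbZ} \mbQ \simeq \prod_{i \in I} W(k_i)_{\mbQ}, \qquad H^0(X, W\mcO_X) \otimes_{\mbZ} \mbQ \simeq \prod_{j \in J} W(k'_j)_{\mbQ},
\]
each a finite product of fields.

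Third, I would analyze the isomorphism. Under these identifications the map takes the explicit form $(x_i)_{i \in I} \mapsto (\iota_j(x_{\tau(j)}))_{j \in J}$, where $\iota_j \colon W(k_{\tau(j)})_{\mbQ} \hookrightarrow W(k'_j)_{\mbQ}$ is induced by the residue-field inclusion. An isomorphism between two finite products of fields induces a bijection on the underlying sets of maximal ideals together with isomorphisms of the associated residue fields; this forces $\tau$ to be bijective and every $\iota_j$ to be an isomorphism. Since $k_{\tau(j)}$ and $k'_j$ are perfect, the extension $W(k_{\tau(j)}) \subseteq W(k'_j)$ is an unramified extension of absolutely unramified DVRs, whence $[W(k'_j)_{\mbQ} : W(k_{\tau(j)})_{\mbQ}] = [k'_j : k_{\tau(j)}]$; the fact that $\iota_j$ is an isomorphism then forces $k'_j = k_{\tau(j)}$.

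Combining these facts, $f$ is a finite morphism that is bijective on underlying sets and induces trivial (in particular purely inseparable) residue-field extensions, which is the standard criterion for $f$ to be a universal homeomorphism. The main technical point is the vanishing of $W\mfm \otimes \mbQ$ for a nilpotent ideal carried out in the second step; the remainder consists of standard facts about finite products of fields and Witt rings of perfect fields.
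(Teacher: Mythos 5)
Your proof is correct and follows essentially the same approach as the paper: decompose into connected components, use that $W(-)\otimes_{\mbZ}\mbQ$ is insensitive to nilpotent ideals to reduce to the residue fields, and then analyze the resulting map between finite products of fraction fields of Witt rings of perfect fields. You spell out the reductions (the nilpotence computation via $p^n\cdot a = V^nF^n(a)$, compatibility of $W$ with products, and the combinatorics forcing the component map to be bijective with trivial residue-field extensions) more explicitly than the paper's terse proof, but the underlying content is the same.
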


\begin{proof}
Since $W(A)\times W(B) \cong W(A\times B)$, 
we may assume that $\# Y = 1$. 
Taking the reduced scheme structures, 
we may assume that $X$ and $Y$ are reduced. 
We may also assume that $\# X =1$, since otherwise the induced map on the $0$-th Witt vector cohomologies 
is clearly not surjective. 
Therefore, we may write 
\[
X=\Spec k_X,\qquad Y=\Spec k_Y, 
\]
where $k \subset k_Y \subset k_X$ are finite extensions of fields. 
It suffices to show $k_X=k_Y$. 
If $k_Y \subsetneq k_X$, then $W(k_Y) \otimes_{\mbZ} \mbQ \subsetneq W(k_X) \otimes_{\mbZ} \mbQ$. 
This means that the induced map 
$$W(k_Y) \otimes_{\mbZ} \mbQ =H^0(Y, W\MO_Y) \otimes_{\mbZ} \mbQ \to
 H^0(X, W\MO_{X}) \otimes_{\mbZ} \mbQ= W(k_X) \otimes_{\mbZ} \mbQ$$
is not surjective, which leads to a contradiction. 
\end{proof}

\begin{lem}\label{zero-dim-higher}
Let $f:X \to Y$ be a continuous map between topological spaces. 
Let $A$ be a sheaf of abelian groups on $X$. 
Assume that there exists a finite subset $\{x_1, \cdots, x_r\}\subset X$ 
which satisfies the following properties. 
\begin{enumerate}
\item{For every $1\leq j \leq r$, $x_j$ is a closed point, that is, 
$\{x_j\}$ is a closed subset of $X$. }
\item{For every $1\leq j \leq r$, $f(x_j)$ is a closed point, that is, 
$\{f(x_j)\}$ is a closed subset of $Y$. }
\item{$A|_{X\setminus \{x_1, \cdots, x_r\}}=0$.}
\end{enumerate}
Then, $R^pf_*A=0$ for every $p>0$. 
\end{lem}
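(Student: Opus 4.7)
The plan is to exhibit $A$ as a flabby sheaf of abelian groups on $X$ and then invoke the $f_*$-acyclicity of flabby sheaves to conclude that $R^pf_*A=0$ for every $p>0$. Note that under this approach condition (2) is not used, so the main content lies in extracting flabbiness from conditions (1) and (3).

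First, I would use conditions (1) and (3) to decompose $A$ as a finite direct sum of skyscraper sheaves. For each $j$, let $i_j:\{x_j\}\hookrightarrow X$ denote the inclusion; this is a closed immersion by condition (1). Let $M_j:=A_{x_j}$ be the stalk at $x_j$ regarded as a sheaf on the one-point space $\{x_j\}$. The adjunction maps $A\to (i_j)_*M_j$ combine into a canonical morphism
\[
\varphi:A \longrightarrow \bigoplus_{j=1}^r (i_j)_*M_j.
\]
Because $\{x_j\}$ is closed in $X$, a direct computation of sections shows that the stalk of $(i_j)_*M_j$ equals $M_j$ at $x_j$ and $0$ elsewhere. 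Combined with condition (3), which forces $A_y=0$ for $y\notin\{x_1,\ldots,x_r\}$, the map $\varphi$ is an isomorphism on every stalk, hence an isomorphism of sheaves.

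Second, I would verify that each summand $(i_j)_*M_j$ is flabby. For any inclusion $V\subset U$ of open subsets of $X$, the restriction map $(i_j)_*M_j(U)\to (i_j)_*M_j(V)$ is either the identity on $M_j$ (when $x_j\in V$) or the zero map to $0$ (when $x_j\notin V$), and both are surjective. A finite direct sum of flabby sheaves is again flabby, so $A$ is flabby.

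Finally, since flabbiness is preserved under restriction to open subsets, $A|_{f^{-1}(U)}$ is flabby on $f^{-1}(U)$ for every open $U\subset Y$, and therefore $H^p(f^{-1}(U),A|_{f^{-1}(U)})=0$ for all $p>0$. As $R^pf_*A$ is the sheafification of the presheaf $U\mapsto H^p(f^{-1}(U),A|_{f^{-1}(U)})$, we conclude $R^pf_*A=0$ for every $p>0$. The only delicate point is the stalkwise identification used to obtain $\varphi$; beyond this bookkeeping, I do not foresee any real obstacle.
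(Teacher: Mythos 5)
Your proof is correct and takes a genuinely different route from the paper's. The paper first uses condition (3) to replace $X$ by the finite $T_1$ (hence discrete) subspace $\{x_1,\dots,x_r\}$ (via $A\cong i_*i^{-1}A$), then uses condition (2) to further reduce to the case where $f$ maps everything to a single point, and finally computes $f_*$ explicitly over open sets of $Y$ as a finite direct sum of stalks, concluding that $f_*$ is exact. You instead identify $A$ with a finite direct sum of skyscraper sheaves at the closed points $x_j$, observe that each skyscraper is flabby, and invoke the standard fact that flabby sheaves are $f_*$-acyclic. Both arguments are sound, but yours is cleaner and more conceptual, and it genuinely strengthens the statement: as you note, the flabbiness argument never invokes condition (2). (Indeed, even the stalk computation used to justify the isomorphism $\varphi$ only needs each $x_j$ to be closed, not each $f(x_j)$.) So condition (2) is superfluous for the lemma as stated, though it is harmless since it is automatically satisfied in the paper's intended application (where $g$ is a proper morphism of varieties over a field).
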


\begin{proof}
Since the condition (3) implies $i_*i^{-1}A \cong A$ where $i:\{x_1, \cdots, x_r\} \hookrightarrow X$, 
we can assume $X=\{x_1, \cdots, x_r\}$. 
By the condition (2), we can also assume that $f(X)$ is one point (set $\{y\} := f(X)$). 
Then, by the condition (1), 
for every sheaf of abelian groups $B$ on $X$ and every open subset $y\in Y' \subset Y$, 
we see 
\[
(f_*B)(Y')=B(f^{-1}(Y'))=B_{x_1}\oplus \cdots \oplus B_{x_r}.
\]
Thus, the functor $f_*$ is exact and hence $R^pf_*A=0$ for every $p>0$. 
\end{proof}

\section{$W\mcO$-rationality of klt threefolds}\label{S-vanishing}

For a variety $X$ admitting a resolution of singularities, 
we consider the following condition: 
\begin{enumerate}
\item[(a)] 
There exists a proper birational morphism $\varphi:V \to X$ 
from a smooth variety $V$ such that $R^i\varphi_*(W\MO_{V, \Q})=0$ holds for every $i>0$.
\end{enumerate}
A variety satisfying this property is said to have \textit{$W \mcO$-rational singularities} 
(see  \cite[Corollary 4.5.1]{CR2} and Remark \ref{cr-remark}). 
This property is stronger than the definition of Witt-rational singularity 
by Blickle and Esnault \cite{BE}, 
and weaker than that of the usual rational singularity (see \cite[Proposition 4.4.17]{CR2}). 
The original $W \mcO$-rational singularity was defined 
by Chatzistamatiou and R{\"u}lling and studied in \cite{CR2}. 
For instance, it is known that the condition (a) is independent of the choice of a resolution: 
\begin{rmk}\label{rmk:any}
When $X$ satisfies the condition (a), it is known that 
$$R^i\varphi_*(W\MO_{V, \Q})=0$$
holds 
for every $i>0$ and for {\bf any} resolution $\varphi$. 
More generally, it is known that the condition (a) is equivalent to 
the condition $R^i\varphi_*(W\MO_{V, \Q})=0$ holds for every $i>0$ for 
some (or any) quasi-resolution $\varphi: V \to X$ (See \cite[Corollary 4.5.1]{CR2}). 

We note that the existence of the quasi-resolution is known for varieties in any dimension 
by \cite[Corollary 5.15]{dJ:quasi-resolution}, and the $W \mcO$-rationality is defined using 
quasi-resolutions when we do not assume the existence of resolutions. 
We also note that for a proper birational morphism $\varphi: V \to X$ between smooth varieties 
the vanishing of the higher direct image sheaf $R^i \varphi _* \mcO _X$ for $i > 0$ 
is known by \cite[Theorem 2]{CR}. 
\end{rmk}
In this section, we show that the property (a) is preserved under pl-contractions and pl-flips 
in dimension three (Subsection \ref{Sub-Witt-pl-cont}). 
As a consequence, we see that an arbitrary klt threefold, of characteristic $p>5$, 
has $W \mcO$-rational singularities (Theorem \ref{thm:WO}). 
For a characteristic-independent argument, 
we also discuss the bijectivity of the normalization of plt centers
in Subsection \ref{Sub-bije-pl-cont}. 
As a consequence in characteristic $p \leq 5$, 
we reduce the problem on the existence of flips to 
a certain special cases (Theorem \ref{thm:235}). 
The readers who are not interested in the low characteristic 
case may skip Subsection \ref{Sub-bije-pl-cont} (see also Remark \ref{rmk:assumption}).

\subsection{$W\mcO$-rationality under pl-contractions and pl-flips}\label{Sub-Witt-pl-cont}
In this subsection, 
we prove Proposition \ref{pl-cont-Witt} and Proposition \ref{flip-Witt-vanish}, 
which state that the $W \mcO$-rationality is preserved under pl-contractions and pl-flips. 

First we recall the definition of pl-contractions and pl-flips. 
\begin{defi}
Let $(X, \Delta)$ be a dlt pair and 
let $S=\sum_{i\in I}S_i$ be the irreducible decomposition of $S:= \llcorner \Delta \lrcorner$. 
We say a proper birational morphism $f:X \to Y$ 
is a \textit{pl-divisorial} (resp.\ \textit{pl-flipping}) contraction if 
the following conditions hold. 
\begin{itemize}
\item $f_*\MO_X=\MO_Y$.
\item $\rho(X/Y)=1$.
\item $-(K_X+\Delta)$ is $f$-ample. 
\item $-S_0$ is $f$-ample for some $0\in I$. 
\item $\dim\Ex(f)=\dim X-1$ (resp.\ $\dim\Ex(f)<\dim X-1$). 
\end{itemize}

For a pl-flipping contraction $f: X \to Y$, 
a proper birational morphism $f^+: X^+ \to Y$ is called a \textit{pl-flip} if the following conditions hold
\begin{itemize}
\item $f^+ _*\MO_{X^+}=\MO_Y$.
\item $\rho(X^+ /Y)=1$.
\item $K_{X^+}+ \Delta ^+$ is $\mbQ$-Cartier and $f^+$-ample, 
where $\Delta ^+$ is the strict transform of $\Delta$ on $X^+$. 
\item $\dim \Ex(f^+) < \dim X-1$. 
\end{itemize}
\end{defi}

\noindent
The advantage of considering pl-contractions is that some problems on the threefold $X$ 
can be reduced to problems on the surfaces $S_i$. 
The following result on surfaces plays a crucial role 
in the proof of Proposition \ref{pl-cont-Witt}. 

\begin{prop}\label{surface-Witt}
Let $(S, \Delta_S)$ be a dlt surface over a perfect field $k$ 
of characteristic $p > 0$ and 
let $f:S \to U$ be a projective morphism to 
a separated scheme $U$ of finite type over $k$. 
If $-(K_S+\Delta_S)$ is $f$-ample, then $R^if_*\MO_S=0$ holds for all $i>0$. 
In particular, $R^if_*(W\MO_{S, \Q})=0$ holds for every $i>0$.
\end{prop}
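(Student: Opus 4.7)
The proof has two parts: establishing $R^i f_* \mcO_S = 0$ for $i > 0$, and then deducing the Witt-vector vanishing.

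The plan for the first part is to invoke a relative Kawamata--Viehweg-type vanishing theorem for dlt surfaces over a perfect field of positive characteristic; this is in the same circle of ideas underlying Tanaka's surface results that already appear in the proofs of Theorem~\ref{thm:bpf} and Lemma~\ref{lemma-cont}. Applied to the divisor $D = 0$ (so that $D - (K_S + \Delta_S) = -(K_S + \Delta_S)$ is $f$-ample), such a vanishing gives directly $R^i f_* \mcO_S = 0$ for $i>0$.

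If a reduction to the klt case is preferable (since the cleanest positive-characteristic surface vanishing statements are phrased for klt pairs), I would proceed as follows. Apply a Bertini-type argument in the spirit of Lemma~\ref{perturb/finite-field} to produce an effective $\mbQ$-divisor $A'$ with $A' \sim_{\mbQ, f} -(K_S + \Delta_S)$ that is $f$-ample and such that the pair $(S, \Delta_S')$ with $\Delta_S' := (1-\epsilon)\Delta_S + \epsilon A'$ is klt for some small rational $\epsilon > 0$. Then
\[
-(K_S + \Delta_S') \sim_{\mbQ, f} (1-\epsilon)\bigl(-(K_S + \Delta_S)\bigr)
\]
is still $f$-ample, so the klt version of relative KVV applies and yields $R^i f_* \mcO_S = 0$.

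The Witt-vector statement follows formally from the first part. The same argument as in Lemma~\ref{usual-Witt-higher}, applied to the structure sheaf via the short exact sequences $0 \to \mcO_S \xrightarrow{V^n} W_{n+1}\mcO_S \xrightarrow{R} W_n\mcO_S \to 0$ (combined with Lemma~\ref{higher-limit} and the resulting identity $Rf_*(W\mcO_S) \cong R\varprojlim_n Rf_*(W_n\mcO_S)$), yields $R^i f_*(W\mcO_S) = 0$ for $i > 0$. An application of Lemma~\ref{lem:Q} then gives $R^i f_*(W\mcO_{S, \mbQ}) = 0$.

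The main obstacle I anticipate is pinning down the precise form of relative Kawamata--Viehweg vanishing needed: dlt surface pair, over an arbitrary perfect base field of characteristic $p>0$, for a projective morphism to a separated scheme $U$ of finite type (not necessarily a variety, not necessarily proper). Surface KVV in positive characteristic is delicate because Kodaira vanishing can fail on smooth surfaces, but for the log Fano contraction setting here the vanishing is available in Tanaka's work. The reduction from dlt to klt outlined above, plus a compactification of $U$ followed by flat base change to remove the finite-type-base hypothesis, should reduce the result to a form that is explicitly in the literature; verifying the citations applies cleanly to a non-projective, non-variety base is the only step requiring real care.
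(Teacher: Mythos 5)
Your skeleton is correct, and the second half of your argument matches the paper exactly: once $R^if_*\mcO_S=0$ is known for $i>0$, the paper also applies Lemma~\ref{usual-Witt-higher} (with $I=\mcO_S$) and Lemma~\ref{lem:Q} to conclude $R^if_*(W\mcO_{S,\Q})=0$.

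The first half has a gap. You propose to prove $R^if_*\mcO_S=0$ by invoking a single relative Kawamata--Viehweg-type vanishing for (klt, after perturbation) surface pairs, and your closing paragraph treats the only remaining issue as chasing down a citation that handles a non-proper, non-variety base $U$. But in positive characteristic no such KVV-type statement is citable when $\dim U=0$, and the actual proof needs a genuine case split. After reducing to $k=\overline{k}$ and $f_*\mcO_S=\mcO_U$ via base change and Stein factorization, and after scaling the boundary down to a klt pair using that dlt surfaces are $\Q$-factorial \cite[Proposition~6.3]{FT} (a more direct route than your Bertini perturbation), the paper argues: if $\dim U\geq 1$, so the fibers of $f$ are at most curves, the relative vanishing \cite[Theorem~2.12]{T1} applies; if $\dim U=0$, the pair $(S,\Delta_S)$ is a klt log del Pezzo surface and the paper instead cites that $S$ is a rational surface \cite[Theorem~3.5]{T1} with rational singularities \cite[Theorem~6.4]{FT}, giving $H^i(S,\mcO_S)\cong H^i(\tilde S,\mcO_{\tilde S})=0$ for a smooth rational resolution $\tilde S$. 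The absolute case is a rationality argument, not a vanishing theorem; your compactification/flat-base-change plan is aimed at the non-proper base and does not touch the genuinely delicate case of a zero-dimensional proper base. That missing ingredient --- the case split and the rationality argument --- is the mathematical content of the proposition.
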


\begin{proof}
By Lemma \ref{usual-Witt-higher} and Lemma \ref{lem:Q}, 
it suffices to show that $R^if_*\MO_S=0$ for $i>0$. 
Since the base change $(X \times_k \overline k, \Delta \times_k \overline k)$ 
to the algebraic closure is also dlt, we may assume that $k$ is an algebraically closed field. 
Since dlt surfaces are always $\mbQ$-factorial (cf.\ \cite[Proposition 6.3]{FT}), 
we may assume that $(S, \Delta_S)$ is klt. 
We may also assume that $f_*\MO_S=\MO_U$ by the Stein factorization. 

If $\dim U \geq 1$, then the assertion follows from \cite[Theorem 2.12]{T1}. 
Suppose $\dim U=0$. 
In this case, the assertion follows from the fact that 
$S$ is a rational surface (\cite[Theorem 3.5]{T1}) with rational singularities 
(cf.\ \cite[Theorem 6.4]{FT}). 
\end{proof}

We prove that the $W \mcO$-rationality is preserved under pl-contractions. 
\begin{prop}\label{pl-cont-Witt}
Let $k$ be a perfect field of characteristic $p > 0$. 
Let $(X, \Delta)$ be a dlt threefold over $k$ and 
let $S=\sum_{i=0}^r S_i$ be the irreducible decomposition of $S:= \llcorner \Delta \lrcorner$. 
Let $f:X \to Y$ be a projective birational morphism 
with the following properties. 
\begin{itemize}
\item{$f_*\MO_X=\MO_Y$.}
\item{$-(K_X+\Delta)$ is $f$-ample.}
\item{$-S_0$ is $f$-ample.}
\item{The normalization $\nu:S_0^N \to S_0$ is a universal homeomorphism.}
\end{itemize}
Then, the following assertions hold. 
\begin{enumerate}
\item $R^if_*(W\MO_{X, \Q})=0$ for every $i>0$. 
\item If $X$ has $W \mcO$-rational singularities, then so does $Y$. 
\end{enumerate}
\end{prop}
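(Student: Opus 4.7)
The strategy is to exploit the $f$-ampleness of $-S_0$ to confine $\Ex(f)$ inside $S_0$, and then to combine Proposition \ref{prop:BBE} with the surface vanishing of Proposition \ref{surface-Witt} via the ideal-sheaf sequence of $S_0$. For (1), I first observe that any curve $C$ contracted by $f$ must satisfy $-S_0\cdot C>0$ and hence lie in $S_0$, so $\Ex(f)\subseteq S_0=\Zeros(\mcI_{S_0})$, and $f$ is an isomorphism outside $\Zeros(\mcI_{S_0})$. Proposition \ref{prop:BBE} applied to $\mcI_{S_0}$ therefore gives
\[
R^i f_*(W\mcI_{S_0,\mbQ})=0 \qquad \text{for every } i>0.
\]
Next, to control the quotient, I use the hypothesis that $\nu:S_0^N\to S_0$ is a universal homeomorphism: Lemma \ref{geom-homeo} yields an isomorphism $W\mcO_{S_0,\mbQ}\xrightarrow{\sim}\nu_*W\mcO_{S_0^N,\mbQ}$, and finiteness of $\nu$ then gives
\[
R^i(f|_{S_0})_*W\mcO_{S_0,\mbQ}\cong R^i(f\circ\nu)_*W\mcO_{S_0^N,\mbQ}.
\]
By dlt adjunction, $(S_0^N,\Delta_{S_0^N})$ is a dlt surface pair satisfying $(K_X+\Delta)|_{S_0^N}=K_{S_0^N}+\Delta_{S_0^N}$, and $-(K_{S_0^N}+\Delta_{S_0^N})$ is $(f\circ\nu)$-ample by restriction of $-(K_X+\Delta)$; so Proposition \ref{surface-Witt} applies and yields $R^i(f\circ\nu)_*W\mcO_{S_0^N,\mbQ}=0$ for $i>0$. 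Finally, the short exact sequence
\[
0\to W\mcI_{S_0}\to W\mcO_X\to j_*W\mcO_{S_0}\to 0
\]
of sheaves of abelian groups on $X$ (with $j:S_0\hookrightarrow X$) induces a long exact sequence in $\mathcal{A}(Y)_{\mbQ}$ upon applying $Rf_*$ and the exact functor $q$ (using Remark \ref{cr-remark}(2)); both outer terms vanish for $i>0$ by the previous two steps, hence $R^if_*W\mcO_{X,\mbQ}=0$ for $i>0$, which proves (1).

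For (2), choose a resolution $\varphi:V\to X$ exhibiting the $W\mcO$-rationality of $X$, so $R^j\varphi_*W\mcO_{V,\mbQ}=0$ for $j>0$ (Remark \ref{rmk:any}); the composition $f\circ\varphi:V\to Y$ is then a proper birational morphism from a smooth threefold. Using $\varphi_*\mcO_V=\mcO_X$ (Zariski's Main Theorem, since $X$ is normal), the direct computation $(\varphi_*W\mcO_V)(U)=W\mcO_V(\varphi^{-1}(U))=W(\mcO_X(U))=(W\mcO_X)(U)$ gives $\varphi_*W\mcO_{V,\mbQ}\cong W\mcO_{X,\mbQ}$. The Leray spectral sequence
\[
E_2^{i,j}=R^if_*R^j\varphi_*W\mcO_{V,\mbQ}\Rightarrow R^{i+j}(f\circ\varphi)_*W\mcO_{V,\mbQ}
\]
therefore collapses to the edge isomorphism $R^i(f\circ\varphi)_*W\mcO_{V,\mbQ}\cong R^if_*W\mcO_{X,\mbQ}$, which vanishes for $i>0$ by (1). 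Hence $f\circ\varphi$ witnesses the $W\mcO$-rationality of $Y$.

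The main delicate point is the dlt adjunction step in (1): one must confirm that $(S_0^N,\Delta_{S_0^N})$, defined via the different, really is a dlt surface pair with $-(K_{S_0^N}+\Delta_{S_0^N})$ relatively ample, so that Proposition \ref{surface-Witt} applies over a possibly non-algebraically-closed perfect base. Once this input is secured, the cohomological formalism involving $\mathcal{A}(Y)_{\mbQ}$ and the Leray spectral sequence is routine.
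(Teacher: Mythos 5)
Your proof is correct and follows essentially the same route as the paper: $\Ex(f)\subset S_0$ forces the $W\mathcal{I}_{S_0,\mathbb{Q}}$ vanishing via Proposition \ref{prop:BBE}, the universal homeomorphism hypothesis plus Lemma \ref{geom-homeo} transfers the question to $S_0^N$, adjunction plus Proposition \ref{surface-Witt} gives the surface vanishing, and the ideal-sheaf sequence assembles these; for (2) you simply unpack the Grothendieck spectral sequence that the paper invokes in one line. The minor reordering (normalizing $S_0$ before versus after the long exact sequence) and the explicit computation that $\varphi_*W\mcO_{V,\mathbb{Q}}\cong W\mcO_{X,\mathbb{Q}}$ are immaterial stylistic differences.
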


\begin{rmk}\label{rmk:assumption}
The fourth assumption is known to be true when $(X, \Delta)$ is 
$\mbQ$-factorial dlt threefold of characteristic $p>5$. 
Indeed, in this case, $S_0$ is known to be normal \cite[Proposition 4.1]{HX} 
(cf.\ Theorem \ref{thm:normality}). 
In the next subsection, this condition is called ``the condition (b)" and 
we study its behavior under pl-contractions and pl-flips in arbitrary positive characteristic. 
\end{rmk}

\begin{proof}[Proof of Proposition \ref{pl-cont-Witt}]
The assertion (2) follows from (1) by the Grothendieck spectral sequence. 
In what follows, we prove the assertion (1). 
Since $-S_0$ is $f$-ample, we see $\Ex(f)\subset S_0$. 
Therefore, by Proposition \ref{prop:BBE}, it follows that 
\[
R^if_*(WI_{S_0, \Q})=0
\]
for any $i > 0$, where $I_{S_0} \subset \mcO _X$ is the defining ideal sheaf of $S_0 \subset X$. 
By the following exact sequence: 
\[
0 \to WI_{S_0, \Q} \to W\MO_{X, \Q} \to \iota_*W\MO_{S_0, \Q} \to 0,
\]
where $\iota:S_0 \hookrightarrow X$, 
it follows that 
\[
R^if_*(W\MO_{X, \Q}) \cong R^if_*(\iota_*W\MO_{S_0, \Q})
\]
for $i > 0$. 
Since $\nu: S_0^N \to S_0$ is a universal homeomorphism, it follows that 
$\nu_*W\MO_{S_0^N} \cong W\MO_{S_0, \Q}$ by Lemma \ref{geom-homeo}, and we obtain 
\[
R^if_*(\iota_*W\MO_{S_0, \Q}) \cong R^if_*(\iota_*\nu_*W\MO_{S^N_0, \Q}) 
\cong R^i(f \circ \iota \circ \nu)_*(W\MO_{S^N_0, \Q}).
\]
Note that the latter isomorphism follows from 
$R^i (\iota \circ \nu) _* (W\MO_{S^N_0, \Q}) = 0$ for $i > 0$ by Lemma \ref{higher-fiber}. 
Since the pair $(S^N_0, \Delta _{S^N_0})$, defined by 
$(K_X + \Delta) |_{S^N_0} = K_{S^N_0} + \Delta _{S^N_0}$, is dlt by adjunction, 
the last sheaf is zero by Proposition \ref{surface-Witt}, which completes the proof. 
\end{proof}

Next, we show Proposition \ref{flip-Witt-vanish}. 
We start with the following result on surfaces over a (possibly imperfect) field (see Remark \ref{rmk:imperfect}). 

\begin{lem}\label{imperfect}
Let $f:T \to S$ be a proper birational morphism of two-dimensional excellent schemes, 
where $T$ is regular and $(S, \Delta_S)$ is dlt for some effective $\mbQ$-divisor $\Delta _S$ on $S$. 
Then, $R^if_*\MO_T=0$ holds for $i>0$. 
\end{lem}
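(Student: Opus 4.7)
The plan is as follows. The vanishing $R^i f_* \MO_T = 0$ is automatic for $i \geq 2$ since the fibers of $f$ have dimension at most one, so only $R^1 f_* \MO_T = 0$ requires work. The question being local on $S$, I may assume $S$ is local at a closed point $s$; if $s$ is regular, then the proper birational morphism $f$ between regular two-dimensional excellent schemes factors as blowups at regular closed points (Lipman), and the vanishing is classical, so I assume $s \in \Sing(S)$.

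I next reduce to a log resolution dominating $T$. Since $\Delta_S \geq 0$, the dlt hypothesis gives $a_E(S, 0) \geq a_E(S, \Delta_S) > -1$ for every exceptional $E$ over $S$, so $(S, 0)$ itself is klt. Take any log resolution $g_0 : W_0 \to S$ of $(S, \Delta_S)$ (these exist for excellent surfaces by Lipman's desingularization) and resolve the birational map $W_0 \dashrightarrow T$ by further point blowups of $W_0$ to obtain a regular surface $W$ with proper birational morphisms $h : W \to T$ and $g = f \circ h : W \to S$. Since $h$ is birational between regular surfaces, it factors through blowups at regular points, so $h_* \MO_W = \MO_T$ and $R^1 h_* \MO_W = 0$; the Leray spectral sequence for $g = f \circ h$ then reduces the lemma to $R^1 g_* \MO_W = 0$.

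The core of the argument is therefore that klt surface singularities are rational in the excellent, possibly imperfect-residue-field, setting. I would work on the minimal resolution $\pi : T_{\min} \to S$, whose exceptional divisor $E = \sum E_i$ carries a negative definite intersection matrix. Writing $K_{T_{\min}} = \pi^* K_S + \sum a_i E_i$ with $a_i > -1$, the relative adjunction/different formalism of \cite[\S 4]{Kollar} gives $(K_{T_{\min}} + E_i) \cdot E_i < 0$ for each $i$; combined with negative definiteness, this forces the fundamental cycle $Z$ of $E$ to satisfy $p_a(Z) = 0$ via the standard inductive argument underlying Artin's criterion, yielding $R^1 \pi_* \MO_{T_{\min}} = 0$. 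A further Leray comparison of $g$ with $\pi$ through any common regular surface dominating both upgrades this to $R^1 g_* \MO_W = 0$, completing the proof.

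The main obstacle is executing the Artin-type arithmetic genus calculation in the last paragraph over a possibly imperfect residue field: the exceptional curves $E_i$ need not be geometrically reduced, so the usual adjunction formula $K_{T_{\min}} \cdot E_i + E_i^2 = 2 p_a(E_i) - 2$ must be replaced by its relative-adjunction counterpart using the different from \cite{Kollar}. Once this bookkeeping is correctly set up, the combinatorial argument forcing $p_a(Z) = 0$ for klt surface singularities goes through characteristic-freely.
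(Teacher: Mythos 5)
Your proposal takes a genuinely different route from the paper. Both begin by passing to the minimal resolution $\pi\colon T_{\min} \to S$, but from there the paper observes that the boundary $\Delta_{T_{\min}}$ defined by crepant pullback has all coefficients strictly between $0$ and $1$ (because minimal-resolution discrepancies are $\leq 0$ while dlt gives $> -1$), and then directly cites the relative Kawamata--Viehweg-type vanishing theorem for excellent surfaces, \cite[Theorem 10.4]{Kollar}. You propose instead to reprove rationality from scratch via Artin's fundamental-cycle criterion. That is a legitimate alternative --- in essence the Kollár theorem is itself established via this sort of analysis on excellent surfaces --- but the paper avoids redoing it by citing the ready-made result; your route would require rebuilding a substantial piece of machinery. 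Incidentally, your detour through an auxiliary log resolution $W_0$ is superfluous: any regular surface proper and birational over $S$ already dominates $T_{\min}$, so Leray gives $R^1 f_* \MO_T \cong R^1\pi_*\MO_{T_{\min}}$ directly, which is what the paper's first sentence says.

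There is also a genuine gap. You acknowledge that the Artin-style arithmetic-genus computation must be redone in the excellent setting with arbitrary residue field, but you do not carry it out, writing only that ``once this bookkeeping is correctly set up, the combinatorial argument forcing $p_a(Z)=0$ for klt surface singularities goes through characteristic-freely.'' This is the entire content of the lemma; it cannot be asserted. Moreover you somewhat misdiagnose the difficulty: you invoke the different from \cite{Kollar} to repair adjunction for the $E_i$, but the different is a device for adjunction onto divisors in \emph{singular} ambient varieties, which is irrelevant here. On the regular surface $T_{\min}$ one has $\omega_{E_i} \cong \omega_{T_{\min}}(E_i)|_{E_i}$ for the Cartier divisor $E_i$, and hence $(K_{T_{\min}}+E_i)\cdot E_i = \deg\omega_{E_i} = 2p_a(E_i)-2$ with $p_a(E_i)=1-\chi(\MO_{E_i})$ holds verbatim, by Riemann--Roch and duality on the proper curve $E_i$ over the residue field; no perfectness or geometric reducedness is required. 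The genuine care needed is rather in formulating the fundamental cycle, running the combinatorial induction, and verifying that Artin's criterion characterizes rationality over a two-dimensional excellent local ring with arbitrary residue field. This is classical (going back to Lipman), but it has to be cited or proved rather than waved at. In short: your reduction steps are sound, but the core of your argument is exactly the part the paper sidesteps by citing \cite[Theorem 10.4]{Kollar}, and you leave it unproved.
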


\begin{proof}
We may replace $f$ with the minimal resolution of $S$. 
Note that the minimal resolutions 
exist by \cite[Theorem 10.5]{Kollar}. 
Then, the $\Q$-divisor $\Delta_T$, defined by $K_T+\Delta_T=f^*(K_S+\Delta_S)$, 
is effective by the negativity lemma. 
If $E$ is an $f$-exceptional curve on $T$, 
then the image $f(E)$ is contained in the non-regular locus of $S$, 
and hence $a_E(S, \Delta_S) <0$. 
Since $(S, \Delta_S)$ is dlt, we obtain $a_E(S, \Delta_S)>-1$. 
This means that, if $f$ is not an isomorphism, then 
some coefficient of $\Delta_T$ is less than one. 
Therefore, $R^1 f_* \mcO _T=0$ follows from \cite[Theorem 10.4]{Kollar} by letting 
\[
L:=\MO_T,\quad N:=(m-1)(K_T+\Delta_T)-f^*(m(K_S+\Delta_S)), 
\] 
where $m$ is a positive integer such that $m(K_S+\Delta_S)$ is Cartier. 
\end{proof}

Lemma \ref{imperfect} provides the following consequence on threefolds. 
\begin{lem}\label{3fold-rational}
Let $(X, \Delta)$ be a dlt threefold over a perfect field $k$ of characteristic $p > 0$. 
Let $\varphi: V \to X$ be a proper birational morphism from a smooth threefold $V$. 
Then, the following assertions hold. 
\begin{enumerate}
\item There exists a zero-dimensional closed subset $Z$ of $X$ 
such that $$(R^i\varphi_*\MO_V)|_{X\setminus Z}=0$$ for any $i>0$. 
\item There exists a zero-dimensional closed subset $Z$ of $X$ 
such that $$(R^i\varphi_*(W\MO_{V}))|_{X\setminus Z}=0$$ for any $i>0$. 
\end{enumerate}
\end{lem}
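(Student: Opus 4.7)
My plan is to prove part (1) by studying the coherent sheaf $R^i\varphi_*\MO_V$ stalk by stalk, reducing at codimension two points to Lemma~\ref{imperfect}; part (2) will then follow from part (1) via Lemma~\ref{usual-Witt-higher}. Since $\varphi$ is proper, each $R^i\varphi_*\MO_V$ is a coherent $\MO_X$-module, so its support $Z_i$ is closed; to conclude that $\dim Z_i\leq 0$ it suffices to verify the vanishing of the stalk at every codimension one and codimension two point of $X$. At a codimension one point $\eta$, the normality of $X$ together with Zariski's main theorem implies that $\varphi$ is an isomorphism over an open neighborhood of $\eta$, so $(R^i\varphi_*\MO_V)_\eta=0$.

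The heart of the argument is at a codimension two point $\eta$, i.e.\ the generic point of a curve $C\subset X$. Set $S:=\Spec\MO_{X,\eta}$ and $\Delta_S:=\Delta|_S$; then $S$ is a two-dimensional excellent local scheme, and $(S,\Delta_S)$ is dlt in the generalized sense of Remark~\ref{rmk:imperfect}, since the dlt condition is local and is preserved by the localization, even though the residue field $k(\eta)$ may be imperfect. Let $T:=V\times_X S$ and let $\varphi_\eta\colon T\to S$ be the base change of $\varphi$. Every local ring of $T$ is a localization of a regular local ring of $V$, so $T$ is regular; moreover $\varphi_\eta$ is proper and birational, and $T$ is two-dimensional. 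Thus Lemma~\ref{imperfect} applies and yields $R^i(\varphi_\eta)_*\MO_T=0$ for every $i>0$. Flat base change along the localization $S\to X$ identifies $R^i(\varphi_\eta)_*\MO_T$ with $(R^i\varphi_*\MO_V)|_S$, whose stalk at the closed point equals $(R^i\varphi_*\MO_V)_\eta$; hence the latter vanishes, proving (1).

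For (2), let $Z\subset X$ be a zero-dimensional closed subset given by (1), set $U:=X\setminus Z$, and write $\varphi_U\colon\varphi^{-1}(U)\to U$ for the restriction of $\varphi$, which is again proper. By (1), $R^i(\varphi_U)_*\MO_{\varphi^{-1}(U)}=(R^i\varphi_*\MO_V)|_U=0$ for every $i>0$, so applying Lemma~\ref{usual-Witt-higher} with $I=\MO_{\varphi^{-1}(U)}$ gives $R^i(\varphi_U)_*(W\MO_{\varphi^{-1}(U)})=0$, i.e.\ $(R^i\varphi_*(W\MO_V))|_U=0$. The main technical point is the codimension two reduction: one must confirm that the hypotheses of Lemma~\ref{imperfect} are satisfied on $S$, in particular that $(S,\Delta_S)$ qualifies as a dlt pair in the excellent, possibly imperfect-residue-field setting of Remark~\ref{rmk:imperfect}, and that $T$ is a regular two-dimensional scheme properly birational over $S$.
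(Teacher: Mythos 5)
Your argument is correct and follows essentially the same route as the paper: reduce (1) to the stalk at a codimension-two point via flat base change under the localization $\Spec\mathcal{O}_{X,\eta}\to X$, apply Lemma~\ref{imperfect} to the resulting proper birational morphism from a regular excellent surface, and deduce (2) from (1) by Lemma~\ref{usual-Witt-higher}. The only cosmetic differences are that you argue directly (checking all points of codimension $\le 2$, including the easy codimension-one case via Zariski's main theorem) whereas the paper argues by contradiction by assuming the support contains a curve and localizing at its generic point; both are equivalent.
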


\begin{proof}
The assertion (2) follows from (1) by Lemma \ref{usual-Witt-higher}. %and Lemma \ref{lem:Q}. 
In what follows, we prove the assertion (1). 
Suppose that $\Supp(R^i\varphi_*\MO_V)$ contains a curve $C$ for some $i>0$ and we derive a contradiction. 
Let $\xi_C$ be the generic point of $C$ and consider the following square of a fiber product: 
\[\xymatrix{
V \ar[d]^{\varphi} & V\times_X \Spec \MO_{X, \xi_C}=:W \ar[l]_{\alpha \hspace{17mm}} \ar[d]^{\psi} \\
X & \Spec \MO_{X, \xi_C} \ar[l]_{\beta \hspace{5mm}}
}\]
By the flat base change theorem, it follows that
\[
0\neq \beta^*R^i\varphi_*\MO_V \cong R^i\psi_*(\alpha^*\MO_V) \cong R^i\psi_*\MO_W.
\]
Note that $(\Spec \MO_{X, \xi_C}, \Delta|_{\Spec \MO_{X, \xi_C}})$ 
is a two dimensional excellent scheme and that $W$ is regular. 
Hence, we obtain a contradiction by Lemma \ref{imperfect}. 
\end{proof}

The $W\mcO$-rationality is preserved under pl-flips. 
\begin{prop}\label{flip-Witt-vanish}
Let $k$ be a perfect field of characteristic $p > 0$, and let 
$g:Y \to Z$ be a proper birational morphism between normal threefolds over $k$ such that 
any fiber of $g$ is at most one dimensional. 
Assume that $(Y, \Delta_Y)$ is dlt for some effective $\Q$-divisor $\Delta_Y$. 
If $Z$ has $W \mcO$-rational singularities, then so does $Y$. 
\end{prop}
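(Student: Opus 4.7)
The plan is to take a resolution $\varphi: V \to Y$, so that $g \circ \varphi: V \to Z$ becomes a proper birational morphism from a smooth threefold, and then to run the Leray spectral sequence
\[
E_2^{p, q} = R^p g_*\big(R^q\varphi_* W\mcO_{V,\Q}\big) \Longrightarrow R^{p+q}(g \circ \varphi)_* W\mcO_{V,\Q}
\]
in the abelian category $\mathcal{A}(Z)_{\Q}$. By Remark \ref{rmk:any} and the hypothesis that $Z$ has $W\mcO$-rational singularities, the abutment vanishes in every positive total degree. The strategy is to show that the $E_2$-page is sparse enough to degenerate, and thereby force $g_*\big(R^q\varphi_* W\mcO_{V,\Q}\big) = 0$ for $q \geq 1$, from which we can descend to $R^q\varphi_* W\mcO_{V,\Q} = 0$ itself.

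Setting $F^q := R^q\varphi_* W\mcO_V$ (so that $F^q_{\Q} = R^q\varphi_* W\mcO_{V,\Q}$ by Remark \ref{cr-remark}(2)), the section-wise definition of $W\mcO_V$ together with $\varphi_* \mcO_V = \mcO_Y$ gives $F^0 = W\mcO_Y$. Applying Lemma \ref{3fold-rational}(2) to the dlt threefold $(Y, \Delta_Y)$, each $F^q$ with $q \geq 1$ is supported on a common finite set of closed points $\{y_1, \ldots, y_r\} \subset Y$; since $g$ is proper, the image $\{g(y_1), \ldots, g(y_r)\}$ is a finite set of closed points in $Z$. Hence Lemma \ref{zero-dim-higher} gives $R^p g_* F^q = 0$, and a fortiori $R^p g_* F^q_{\Q} = 0$, for every $p \geq 1$ and $q \geq 1$. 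Meanwhile, since $g$ has fibers of dimension at most one, Lemma \ref{higher-fiber} applied with $I = \mcO_Y$ together with Lemma \ref{lem:Q} yields $R^p g_* W\mcO_{Y,\Q} = 0$ for $p \geq 2$.

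Consequently, on the $E_2$-page only the entries in the column $p = 0$ and the entry $(1, 0)$ are possibly nonzero, and every $d_r$ with $r \geq 2$ between these entries vanishes for degree reasons. So the spectral sequence degenerates at $E_2$, and the vanishing of the abutment forces $g_* F^q_{\Q} = 0$ for every $q \geq 1$. Finally, since $F^q$ is a direct sum of skyscraper sheaves supported at $y_1, \ldots, y_r$, for each $i$ a small neighborhood $U \subset Z$ of $g(y_i)$ disjoint from the other $g(y_j)$ identifies $(g_* F^q)_{g(y_i)} = \bigoplus_{j:\, g(y_j) = g(y_i)} (F^q)_{y_j}$, exhibiting $(F^q)_{y_i}$ as a direct summand. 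Thus any integer $N$ annihilating $g_* F^q$ also annihilates $F^q$, giving $F^q_{\Q} = 0$ for $q \geq 1$, which proves the $W\mcO$-rationality of $Y$. I expect the main delicate point to be this final descent step, where one has to translate the vanishing in $\mathcal{A}(Z)_{\Q}$ back to the vanishing in $\mathcal{A}(Y)_{\Q}$ using properness of $g$ and the explicit skyscraper form of the $F^q$.
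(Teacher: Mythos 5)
Your argument is correct and reproduces the paper's proof: the same Grothendieck spectral sequence for $V \to Y \to Z$, the same use of Lemma \ref{3fold-rational} and Lemma \ref{zero-dim-higher} to kill the $E_2$-entries with both indices positive, and the same application of Lemma \ref{higher-fiber} to kill the entries with $p \geq 2$, $q=0$. The only cosmetic differences are that you infer $E_2^{1,0}=0$ from degeneration of the sparse $E_2$-page rather than from the canonical injection $E_2^{1,0} \hookrightarrow E^1$ used in the paper, and you spell out the final stalk-theoretic descent from $g_* F^q_{\Q} = 0$ to $F^q_{\Q} = 0$, which the paper leaves implicit.
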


\begin{proof}
Let $\psi: V \to Y$ be a resolution of $Y$ (cf.\ \cite{CP}). 
We prove $R^i\psi_*(W\MO_{V, \Q})=0$ for $i>0$. 
We denote by $\varphi$ the composition $g \circ \psi$: 
\[
\varphi:V \overset{\psi}\to Y \overset{g}\to Z. 
\]
Consider the Grothendieck spectral sequence: 
\[
E_2^{i, j}=R^ig_*R^j\psi_*(W\MO_{V, \Q}) \Longrightarrow E^{i+j} = R^{i+j}\varphi_*(W\MO_{V, \Q}).
\]
Here, we claim that $E_2^{i, j} =0$ holds for $i>0$. 
\begin{claim}\label{claim:E_2^{i, j}}
$E_2^{i, j} =R^ig_*R^j\psi_*(W\MO_{V, \Q}) =0$ holds for every $i>0$ and $j \ge 0$. 
\end{claim}
\noindent
First we assume this claim and finish the proof. 
This claim and the spectral sequence above yield 
\[
g_* \big(R^j\psi_*(W\MO_{V, \Q}) \big) \cong R^{j}\varphi_*(W\MO_{V, \Q}), 
\]
and this is zero for $j > 0$ by the $W\mcO$-rationality of $Z$ (cf.\ Remark \ref{rmk:any}). 
On the other hand, by Lemma \ref{3fold-rational}, 
the support of $R^j\psi_*(W\MO_{V})$ is at most a finite set for $j > 0$. 
Therefore, $g_*R^j\psi_*(W\MO_{V, \Q})=0$ implies $R^j\psi_*(W\MO_{V, \Q})=0$, 
which shows the $W\mcO$-rationality of $Y$. 

\begin{proof}[Proof of Claim \ref{claim:E_2^{i, j}}]
We divide the proof into three cases: (I) $i>0, j>0$, (II) $i=1$, $j=0$, and (III) $i \geq 2$, $j=0$. 

(I) 
Suppose that $i>0$ and $j>0$. 
By Lemma \ref{3fold-rational}, 
$\Supp \big( R^j\psi_*(W\MO_{V}) \big)$ is at most a finite set. 
Therefore, $R^ig_*R^j\psi_*(W\MO_{V})=0$ holds by Lemma~\ref{zero-dim-higher}. 
In particular, we get $R^ig_*R^j\psi_*(W\MO_{V, \Q})=0$.

(II) 
Suppose that $i=1$ and $j=0$. 
Then $E^{1,0} _2 = 0$ follows from 
the injectivity $E_2^{1, 0} \hookrightarrow E^1$ and the vanishing $E^1=R^1\varphi_*(W\MO_{V, \Q})=0$ 
which follows from the $W \mcO$-rationality of $Z$. 

(III) 
Suppose that $i \ge 2$ and $j=0$. 
Then by Lemma~\ref{higher-fiber}, 
\[
E_2^{i, 0}=R^ig_*(\psi_*(W\MO_{V, \Q}))=R^ig_*(W\MO_{Y, \Q})
\]
turns out to be zero since any fiber of $g$ is at most one dimensional. 
\end{proof}
This completes the proof of  Proposition \ref{flip-Witt-vanish}. 
\end{proof}

\subsection{Normalization for plt centers under pl-contractions and pl-flips}\label{Sub-bije-pl-cont}

For a $\Q$-factorial dlt threefold $(Y, \Delta)$, we consider the following condition: 
\begin{enumerate}
\item[(b)] The normalization of each irreducible component of $\llcorner \Delta\lrcorner$ is 
a universal homeomorphism. 
\end{enumerate}
In this subsection, we study its behavior under pl-contractions (Proposition \ref{pl-cont-bije}) 
and pl-flips (Proposition \ref{pl-flip-bije}). 

First we prove the following lemma, 
which is a consequence of the Koll\'ar--Shokurov connectedness lemma for surfaces 
(\cite[Proposition 2.3]{T3}). 

\begin{lem}\label{Kollar-Shokurov}
Let $(S, \Delta_S)$ be a dlt surface over a perfect field of characteristic $p>0$ and 
let $\pi:S \to U$ be a projective surjective morphism 
such that every fiber of $\pi$ is geometrically connected. 
Assume that $-(K_S+\Delta_S)$ is $\pi$-ample. 
Let $0\leq D \leq \llcorner \Delta_S \lrcorner$ be a divisor satisfying $D= \llcorner D\lrcorner$. 
Then, $\Supp(D) \cap \pi^{-1}(u)$ is geometrically connected for every closed point $u\in U$. 
\end{lem}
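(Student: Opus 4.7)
My plan is to reduce to the case $D = \llcorner \Delta_S \lrcorner$ by a small perturbation of the boundary, and then invoke the Koll\'ar--Shokurov connectedness lemma for surfaces \cite[Proposition 2.3]{T3}.

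Concretely, I would set
\[
\Delta'_S := D + (1-\epsilon)(\Delta_S - D)
\]
for a sufficiently small positive rational number $\epsilon$. A direct check on coefficients gives $\llcorner \Delta'_S \lrcorner = D$: prime components of $D$ retain coefficient $1$, while each prime component of $\Delta_S - D$ appears in $\Delta_S$ with coefficient at most $1$, so its coefficient in $\Delta'_S$ is at most $1-\epsilon < 1$. Since $\Delta'_S \le \Delta_S$ componentwise, discrepancies only increase, so any log resolution of $(S, \Delta_S)$ witnesses that $(S, \Delta'_S)$ is still dlt.

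For the positivity, I would use that dlt surfaces are $\mathbb{Q}$-factorial (cf.\ \cite[Proposition 6.3]{FT}), so $\Delta_S - D$ is $\mathbb{Q}$-Cartier, and
\[
-(K_S+\Delta'_S) = -(K_S+\Delta_S) + \epsilon(\Delta_S - D).
\]
By the openness of the relative ample cone, the right-hand side remains $\pi$-ample for sufficiently small $\epsilon > 0$. Applying \cite[Proposition 2.3]{T3} to $(S, \Delta'_S)$ and $\pi$ then yields that $\llcorner \Delta'_S \lrcorner \cap \pi^{-1}(u) = \Supp(D) \cap \pi^{-1}(u)$ is geometrically connected for every closed point $u \in U$, as desired.

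The substantive input is \cite[Proposition 2.3]{T3} itself; the only delicate point of the reduction is arranging the perturbation so that $\llcorner \Delta'_S \lrcorner$ lands exactly on the prescribed subdivisor $D$ while the positivity of $-(K_S + \Delta'_S)$ is preserved, which is precisely what the simultaneous rescaling of $\Delta_S - D$ by $(1-\epsilon)$ accomplishes.
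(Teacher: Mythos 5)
Your proof is correct and follows essentially the same approach as the paper's: reduce to the case $D = \llcorner \Delta_S' \lrcorner$ by a small perturbation of the boundary (using $\mathbb{Q}$-factoriality of dlt surfaces and openness of the relative ample cone), then invoke Tanaka's surface Koll\'ar--Shokurov connectedness result from \cite{T3}. The paper additionally first reduces to $k$ algebraically closed and to $\pi_*\mcO_S=\mcO_U$ via Stein factorization --- routine preliminaries you leave implicit --- while you make the perturbation $\Delta_S' = D + (1-\epsilon)(\Delta_S - D)$ explicit, which the paper only asserts.
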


\begin{proof}
Replacing $k$ by the algebraic closure, we may assume that $k$ is algebraically closed. 
Taking the Stein factorization of $\pi$, 
we can assume that $\pi_*\MO_S=\MO_U$. 
Note that $S$ is $\Q$-factorial (cf.\ \cite[Proposition 6.3]{FT}). 
By taking a small perturbation, 
we can find $0\leq \Delta_S' \leq \Delta_S$ 
such that $D=\llcorner \Delta_S' \lrcorner$ and that $-(K_S+\Delta_S')$ is $\pi$-ample. 
Then, the assertion follows from \cite[Proposition 2.2]{T3}. 
\end{proof}

The condition (b) is preserved under pl-contractions. 
\begin{prop}\label{pl-cont-bije}
Let $k$ be a perfect field of characteristic $p > 0$. 
Let $(X, \Delta)$ be a $\mbQ$-factorial  dlt threefold over $k$ and 
let $S=\sum_{i=0}^r S_i$ be the irreducible decomposition of $S:= \llcorner \Delta \lrcorner$. 
Let $f:X \to Y$ be a projective birational morphism 
with the following properties. 
\begin{itemize}
\item $f_*\MO_X=\MO_Y$. 
\item $-(K_X+\Delta)$ is $f$-ample. 
\item $-S_0$ is $f$-ample. 
\item For every $i$, the normalization morphism $S_i^N \to S_i$ is a universal homeomorphism. 
\end{itemize}
Let $T_i:=f(S_i)$ be the scheme-theoretic image. 
Then for every $i\in I$, the normalization morphism $T_i^N \to T_i$ is a universal homeomorphism. 
\end{prop}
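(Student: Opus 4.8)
The plan is to reduce the statement to a problem about the birational morphism $f|_{S_0^N}$ on the normalization of the distinguished component $S_0$, and then to handle the other components $S_i$ ($i \geq 1$) by a connectedness argument. The key observation is that $f$ is a pl-contraction in disguise: since $-S_0$ is $f$-ample, the exceptional locus $\Ex(f)$ is contained in $S_0$, so $f$ is an isomorphism away from $S_0$. Consequently, for $i \geq 1$ the map $f|_{S_i}: S_i \to T_i$ is birational and contracts only curves lying in $S_0 \cap S_i$, i.e. curves contained in the fiber-type locus over the image point(s).

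First I would treat the case $i = 0$. Composing with the normalization, we get a proper birational morphism $g_0 \colon S_0^N \to T_0^N$ (using that $S_0^N \to S_0 \to T_0$ factors through $T_0^N$ by normality of $S_0^N$). By adjunction applied to the plt (after shrinking $\Delta$, we may assume $(X,\Delta)$ is plt with $S_0 = \llcorner \Delta \lrcorner$), the pair $(S_0^N, \Delta_{S_0^N})$ defined by $(K_X+\Delta)|_{S_0^N} = K_{S_0^N} + \Delta_{S_0^N}$ is klt, and $-(K_{S_0^N}+\Delta_{S_0^N}) = -(K_X+\Delta)|_{S_0^N}$ is ample over $Y$, in particular over the (zero- or one-dimensional) image. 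I would argue that $g_0$ is a universal homeomorphism: by the hypothesis the composition $S_0^N \to S_0$ is already a universal homeomorphism, so it suffices to show $S_0 \to T_0$ is one, equivalently $S_0^N \to T_0$ is one; since this is a proper birational morphism of surfaces with $-(K_{S_0^N}+\Delta_{S_0^N})$ ample over the base and $\rho(X/Y)=1$ forces $\rho$-type control, the only candidate contracted loci are the fibers over the finitely many non-isomorphism points, and the connectedness of those fibers (Lemma~\ref{Kollar-Shokurov}, or rather the Kollár--Shokurov connectedness reflected through $W\mathcal{O}$-cohomology vanishing of Proposition~\ref{surface-Witt}) gives that the scheme-theoretic fibers of $g_0$ are connected, hence $g_0$ is a universal homeomorphism. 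Then $T_0^N \to T_0$ is a universal homeomorphism because it is sandwiched: $S_0^N \to T_0$ factors as $S_0^N \to T_0^N \to T_0$ with the composite a universal homeomorphism, so $T_0^N \to T_0$ is too.

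Next, for $i \geq 1$, I would use that $f$ is an isomorphism outside $S_0$, so $f|_{S_i}\colon S_i \to T_i$ is an isomorphism outside $S_0 \cap S_i$, which is a curve or finite set lying in the fiber locus of $f$. Passing to normalizations, $S_i^N \to T_i$ factors through $T_i^N$, and I want $S_i^N \to T_i^N$ to be a universal homeomorphism; combined with the hypothesis that $S_i^N \to S_i$ is a universal homeomorphism this would give the claim for $T_i$ as in the previous paragraph. Here I would again invoke adjunction on $X$ along $S_i$ to see $(S_i^N, \Delta_{S_i^N})$ is dlt with $-(K_{S_i^N}+\Delta_{S_i^N})$ nef-and-big (indeed ample) over $Y$, so that $R^1(f|_{S_i^N})_*\mathcal{O} = 0$ by Proposition~\ref{surface-Witt}, forcing the fibers of $S_i^N \to Y$ (hence over $T_i^N$) to be connected; being also birational with one-dimensional exceptional fibers, $S_i^N \to T_i^N$ is finite birational onto a normal surface, hence an isomorphism in codimension one, and then a universal homeomorphism by Zariski's main theorem once connectedness of fibers is established.

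The main obstacle I anticipate is the bookkeeping needed to control the fibers of $f|_{S_i}$ precisely enough to conclude the connectedness/bijectivity statement: $S_i \cap S_0$ need not be irreducible, $S_i$ itself may be non-normal exactly along the locus that $f$ contracts, and one must be careful that passing to the normalization does not destroy the connectedness coming from the Kollár--Shokurov-type lemma. The cleanest route is probably to phrase everything through the vanishing $R^1(-)_*\mathcal{O}_{S_i^N}=0$ (valid because $-(K_X+\Delta)|_{S_i^N}$ is ample over the base and the relevant pair is dlt, Proposition~\ref{surface-Witt}), which yields connectedness of all scheme-theoretic fibers of the induced maps on normalizations directly, and then to combine with properness and birationality to get universal homeomorphism via Zariski's main theorem; the universal homeomorphism statement for $T_i^N \to T_i$ is then formal from the factorization $S_i^N \to T_i^N \to T_i$.
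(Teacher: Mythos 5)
Your proposal contains a genuine structural error that would make the argument fail. You aim to show that the surface morphisms $S_i^N \to T_i^N$ (and even $S_0 \to T_0$) are universal homeomorphisms, and then to deduce that $T_i^N \to T_i$ is one by a sandwich argument. But these surface maps generically have positive-dimensional fibers: when $f$ contracts a curve $C \subset S_0$ (a flipping contraction) or contracts $S_0$ to a curve or point (a divisorial contraction), the maps $S_0^N \to T_0$ and $S_i^N \to T_i^N$ collapse one-dimensional loci, so they cannot possibly be universal homeomorphisms. The statement you actually need is weaker and different: that the \emph{fibers} of $g_i := f|_{S_i} : S_i \to T_i$ are geometrically connected; then the Stein factorization of $S_i^N \to T_i$ identifies $T_i^N \to T_i$ as a finite surjective morphism with purely inseparable, reduced, single-point fibers, i.e.\ a universal homeomorphism. ``Connected fibers'' and ``universal homeomorphism'' are simply not the same thing for a birational surface contraction, and conflating them breaks both the $i=0$ and $i\geq 1$ cases.

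The second gap is your appeal to $R^1(f|_{S_i^N})_*\mathcal{O} = 0$ (Proposition~\ref{surface-Witt}) to conclude connectedness of fibers. That vanishing says nothing about $R^0$ and does not yield connectedness; one can have disconnected fibers of rational curves with vanishing $R^1$. The tool that actually delivers connectedness here is the Koll\'ar--Shokurov connectedness lemma (Lemma~\ref{Kollar-Shokurov}), which is about the boundary of a relative log Fano surface, not about cohomology vanishing, and crucially it must be applied on the distinguished component $S_0^N$, not on $S_i^N$. The paper's argument for $i \geq 1$ observes that $g_i^{-1}(y) = S_i \cap g_0^{-1}(y)$ for $y \in T_0$, pulls $C_i := S_0 \cap S_i$ back to $\nu_0^{-1}(C_i) \subset \llcorner \Delta_{S_0^N} \lrcorner$, and then applies Lemma~\ref{Kollar-Shokurov} to the pair $(S_0^N, \Delta_{S_0^N})$ with $D = \nu_0^{-1}(C_i)$ to get geometric connectedness of each fiber of $\nu_0^{-1}(C_i) \to T_0$. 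For $i = 0$, no connectedness lemma is needed at all: since $-S_0$ is $f$-ample, $\Ex(f) \subset S_0$, so the fibers of $g_0$ coincide with the fibers of $f$, which are geometrically connected because $f_*\mcO_X = \mcO_Y$. Your proposal misses both of these steps.
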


\begin{proof}
For each $i$, 
it is sufficient to show that every fiber of $g_i:=f|_{S_i} :S_i \to T_i$ is 
geometrically connected.

Suppose $i = 0$. 
Since $-S_0$ is $f$-ample, we see $\Ex(f) \subset S_0$. 
As $f$ has geometrically connected fibers, 
every fiber of $g_0:S_0 \to T_0$ is also geometrically connected. 

Suppose $i \not = 0$. 
Let $y \in T_i \subset Y$. 
It is enough to show that $g_i^{-1}(y)$ is geometrically connected. 
We can assume that $y \in f(\Ex(f))$, in particular, $y \in T_0$. 
It follows that  
\[
g_i^{-1}(y)=S_0 \cap (f|_{S_i})^{-1}(y)=S_0 \cap S_i \cap f^{-1}(y) 
=S_i \cap g_0^{-1}(y).
\]
Hence it is sufficient to show that 
each fiber of the composition 
\[
C_i := S_0 \cap S_i \hookrightarrow S_0 \xrightarrow{g_0} T_0
\]
is either empty or geometrically connected. 
Now, we set $\nu_0:S_0^N \to S_0$ and 
define $\Delta_{S_0^N}$ by $(K_X+\Delta)|_{S_0^N}=K_{S_0^N}+\Delta_{S_0^N}$. 
Note that the intersection $C_i := S_0 \cap S_i$ is either empty or 
purely one-dimensional because $X$ is $\Q$-factorial. 
Thus, $\nu_0^{-1}(C_i)$ is also purely one-dimensional and 
$\nu_0^{-1}(C_i) \subset \Supp(\llcorner \Delta_{S_0^N}\lrcorner)$ holds by adjunction. 
Further, $(S_0^N, \Delta _{S_0^N})$ is dlt and $-(K_{S_0^N}+\Delta_{S_0^N})$ is 
$(g_0 \circ \nu)$-ample. 
Then, by Lemma \ref{Kollar-Shokurov}, 
it follows that 
each fiber of the composition 
\[
\nu_0^{-1}(C_i) \hookrightarrow S_0 ^N \xrightarrow{g_0 \circ \nu} T_0
\]
is geometrically connected, which is what we want to show. 
\end{proof}

Condition (b) is preserved under pl-flips if we make an additional assumption 
on the Witt vector cohomology.

\begin{prop}\label{pl-flip-bije}
Let $g:Y \to Z$ be a proper birational morphism between normal threefolds over a perfect field of characteristic $p>0$, 
such that any fiber of $g$ is at most one dimensional. 
Let $S_Y$ be a prime divisor on $Y$ 
and set $S_Z:=g_*(S_Y)$. 
Assume the following three conditions. 
\begin{itemize}
\item $S_Y$ is regular in codimension one.
\item The normalization $S_Z^N \to S_Z$ is a universal homeomorphism. 
\item $R^1g_*(W\MO_{Y, \Q})=0$. 
\end{itemize}
Then the normalization $\nu:S_Y^N \to S_Y$ is a universal homeomorphism. 
\end{prop}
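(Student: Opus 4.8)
The plan is to push the hypothesis $R^1g_*(W\MO_{Y,\Q})=0$ down to the surface $S_Y$, and then to compare, \emph{over} $S_Z$, the Witt vector cohomologies of $S_Y$, of its normalization $S_Y^N$, and of $S_Z$. Write $\pi:=g|_{S_Y}\colon S_Y\to S_Z$ (note $S_Y$ is not $g$-exceptional, else $S_Z=0$); this is a proper birational morphism of $2$-dimensional varieties whose fibres have dimension $\le 1$, since $S_Z$ is a prime divisor and therefore meets the locus where $g$ is an isomorphism. Write $\nu\colon S_Y^N\to S_Y$ and $\mu\colon S_Z^N\to S_Z$ for the normalizations, and recall that $\mu$ is a universal homeomorphism by hypothesis. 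First I would establish $R^1\pi_*(W\MO_{S_Y,\Q})=0$: applying $Rg_*$ to the exact sequence $0\to WI_{S_Y,\Q}\to W\MO_{Y,\Q}\to\iota_*W\MO_{S_Y,\Q}\to 0$ (with $\iota\colon S_Y\hookrightarrow Y$ and $I_{S_Y}$ the ideal sheaf of $S_Y$), one has $R^2g_*(WI_{S_Y,\Q})=0$ by Lemma~\ref{higher-fiber} and Remark~\ref{cr-remark}, since the fibres of $g$ are at most one-dimensional, and $R^1g_*(W\MO_{Y,\Q})=0$ by assumption, so the long exact sequence gives $R^1g_*(\iota_*W\MO_{S_Y,\Q})=0$, which is $R^1\pi_*(W\MO_{S_Y,\Q})$ up to the closed immersion $S_Z\hookrightarrow Z$. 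This is the only place the third hypothesis intervenes.

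Since $S_Y$ is regular in codimension one, $\nu$ is an isomorphism away from a finite set of closed points, so in the exact sequence
\[
0\to W\MO_{S_Y,\Q}\to \nu_*W\MO_{S_Y^N,\Q}\to \mathcal{Q}\to 0
\]
the object $\mathcal{Q}$ is supported on finitely many closed points. A short computation with Witt vectors shows that $\mathcal{Q}=0$ already forces $\nu$ to be a universal homeomorphism: vanishing of $\mathcal{Q}$ means $p^{c}\cdot\nu_*W\MO_{S_Y^N}\subseteq W\MO_{S_Y}$ locally for some $c$, which amounts to $(\nu_*\MO_{S_Y^N})^{p^{c}}\subseteq \MO_{S_Y}$, and this is exactly the condition that the finite morphism $\nu$ be a universal homeomorphism (cf.\ the proof of Lemma~\ref{geom-homeo}). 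Since $\mathcal{Q}$ is supported on finitely many closed points which $\pi$ carries to closed points, it suffices in turn to prove $\pi_*\mathcal{Q}=0$.

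Now apply $\pi_*$ to the displayed sequence. Using $R^1\pi_*(W\MO_{S_Y,\Q})=0$, the object $\pi_*\mathcal{Q}$ embeds into the cokernel of the natural map $\pi_*W\MO_{S_Y,\Q}\to \pi_*\nu_*W\MO_{S_Y^N,\Q}$, so it is enough to show that this map is an isomorphism, which I would do by identifying both sides with $W\MO_{S_Z,\Q}$. The sheaf of algebras $\pi_*\MO_{S_Y}$ is coherent, finite over $\MO_{S_Z}$, and lies between $\MO_{S_Z}$ and its integral closure $\mu_*\MO_{S_Z^N}$; as $\mu$ is a universal homeomorphism it factors through $\Spec_{S_Z}\pi_*\MO_{S_Y}\to S_Z$, which is therefore again a universal homeomorphism, while $S_Y\to\Spec_{S_Z}\pi_*\MO_{S_Y}$ is the Stein factorization of $\pi$ and so has geometrically connected fibres; two applications of Lemma~\ref{geom-homeo} then give $W\MO_{S_Z,\Q}\xrightarrow{\sim}\pi_*W\MO_{S_Y,\Q}$. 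The same argument applied to $h:=\pi\circ\nu\colon S_Y^N\to S_Z$ --- for which $h_*\MO_{S_Y^N}=\mu_*\MO_{S_Z^N}$ because $S_Y^N$ is normal, so that the Stein factorization of $h$ reads $S_Y^N\to S_Z^N\xrightarrow{\mu}S_Z$ --- gives $W\MO_{S_Z,\Q}\xrightarrow{\sim}\pi_*\nu_*W\MO_{S_Y^N,\Q}$. By functoriality of pullback these two isomorphisms are compatible with $\pi_*W\MO_{S_Y,\Q}\to\pi_*\nu_*W\MO_{S_Y^N,\Q}$, which is hence an isomorphism. Therefore $\pi_*\mathcal{Q}=0$, so $\mathcal{Q}=0$, and $\nu$ is a universal homeomorphism.

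I expect the comparison step in the previous paragraph to be the main obstacle. The delicate point is to verify that $\pi$ and $h$ have geometrically connected fibres; this is exactly where the hypothesis that $S_Z^N\to S_Z$ is a universal homeomorphism (so that $S_Z$ is unibranch) is used, through the behaviour of Stein factorizations under finite, universally homeomorphic modifications of the base, and one must also track the various instances of Lemma~\ref{geom-homeo} so that the relevant square of pullback maps genuinely commutes. By contrast, the descent of the $R^1$-vanishing to $S_Y$ and the implication ``$\mathcal{Q}=0$'' $\Rightarrow$ ``$\nu$ is a universal homeomorphism'' are routine once one records the elementary equivalence $p^{c}W(B)\subseteq W(A)\iff B^{p^{c}}\subseteq A$ for an extension of rings $A\subseteq B$.
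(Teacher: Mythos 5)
Your proof is correct, and the overall skeleton matches the paper's: descend the vanishing of $R^1g_*(W\MO_{Y,\Q})$ to $R^1\pi_*(W\MO_{S_Y,\Q})=0$ via the ideal-sheaf sequence and Lemma~\ref{higher-fiber}, then compare $W\MO_{S_Y,\Q}$, $\nu_*W\MO_{S_Y^N,\Q}$ and $W\MO_{S_Z,\Q}$ after pushing forward, using Lemma~\ref{geom-homeo} and the hypothesis that $S_Z^N\to S_Z$ is a universal homeomorphism. The notable divergence is the finale. The paper applies $h_*$ to the conductor-square exact sequence $0\to W\MO_{S_Y,\Q}\to\nu_*W\MO_{S_Y^N,\Q}\oplus W\MO_{C,\Q}\to\nu_*W\MO_{D,\Q}\to 0$ to reduce to an $H^0$-isomorphism of the zero-dimensional schemes $C$ and $D$, and then invokes Lemma~\ref{zero-dim} to conclude $D\to C$ (hence $\nu$) is a universal homeomorphism. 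You instead work with the plain two-term sequence $0\to W\MO_{S_Y,\Q}\to\nu_*W\MO_{S_Y^N,\Q}\to\mathcal Q\to 0$, show $\pi_*\mathcal Q=0$ and therefore $\mathcal Q=0$ (since $\mathcal Q$ is supported on finitely many closed points, using the first hypothesis), and then close with the direct ring-theoretic equivalence $p^cW(B)\subseteq W(A)\Longleftrightarrow B^{p^c}\subseteq A$, which is precisely the argument inside the proof of Lemma~\ref{geom-homeo} run in reverse. This bypasses Lemma~\ref{zero-dim} and the conductor entirely, at the cost of having to verify carefully that $\pi$ and $\pi\circ\nu$ have geometrically connected fibres (your Stein-factorization argument does this cleanly; the paper cites Zariski's main theorem for $S_Y^N\to S_Z^N$ and then composes with the universal homeomorphism). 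One small imprecision: after applying $\pi_*$ the object $\pi_*\mathcal Q$ is not merely embedded in the cokernel but \emph{equals} it, by the vanishing of $R^1\pi_*$; this only makes your argument stronger.
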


\begin{proof}
Set $g|_{S_Y}=:h:S_Y \to S_Z$. 
First we prove $R^1h_*(W\MO_{S_Y, \Q})=0$. 
Consider the following exact sequence: 
\[
0 \to WI_{S_Y, \Q} \to W\MO_{Y, \Q} \to W\MO_{S_Y, \Q} \to 0.
\]
Since $\dim \Ex(g) \leq 1$, it follows that $R^2 g_* (WI_{S_Y, \Q}) = 0$ by Lemma \ref{higher-fiber}. 
Thus, $R^1h_*(W\MO_{S_Y, \Q})=0$ follows from the assumption $R^1g_*(W\MO_{Y, \Q})=0$. 

Let $C \subset S_Y$ be the conductor of the normalization $\nu:S_Y^N \to S_Y$ 
and let $D:=\nu^{-1}(C)\subset S_Y^N$ be its scheme-theoretic inverse image. 
Then we have the following exact sequence: 
\[
0 \to W\MO_{S_Y, \Q} \to \nu_*W\MO_{S_Y^N, \Q} \oplus W\MO_{C, \Q} \to \nu_*W\MO_{D, \Q} \to 0. 
\]
Since $R^1h_*(W\MO_{S_Y, \Q})=0$, we obtain the exact sequence
\[
0 \to h_*W\MO_{S_Y, \Q} \to h_*\nu_*W\MO_{S_Y^N, \Q} \oplus h_*W\MO_{C, \Q} 
\to h_*\nu_*W\MO_{D, \Q} \to 0.
\]
Note that $S_Y^N \to S_Z^N$ is a birational morphism with geometrically connected fibers 
by the Zariski main theorem. 
Since the normalization $S_Z^N \to S_Z$ is a universal homeomorphism, 
both of $h\circ \nu:S_Y^N \to S_Z$ and $h$ have geometrically connected fibers, hence we obtain 
\[
h_*W\MO_{S_Y, \Q} \cong W\MO_{S_Z, \Q} \cong h_*\nu_*W\MO_{S_Y^N, \Q}
\]
by Lemma \ref{geom-homeo}. 
Thus, we obtain $h_*W\MO_{C, \Q} \cong h_*\nu_*W\MO_{D, \Q}$ by the exact sequence above, and hence
\[
H^0(C, W\MO_{C, \Q}) \cong H^0(D, W\MO_{D, \Q}).
\]
Note that $C$ and $D$ are zero-dimensional by the first assumption. 
Therefore, by Lemma \ref{zero-dim}, it follows that 
$C \to D$ is a universal homeomorphism,  
hence so is $\nu:S_Y^N \to S_Y$. 
\end{proof}

\subsection{$W \mcO$-rationality of klt threefolds}\label{Sub-proof}
In this subsection, we prove that the conditions (a) and (b) hold 
for $\mbQ$-factorial dlt threefolds modulo the following conjecture on the existence of flips. 

\begin{conj}\label{conj-flip}
Let $k$ be a perfect field of characteristic $p>0$. 
Let $f:(Y, T+B) \to Z$ be a pl-flipping contraction from a $\Q$-factorial plt threefold $(Y, T+B)$
over $k$, 
where $\llcorner T+B\lrcorner=T$. 
If the normalization of $T$ is a universal homeomorphism, then a flip of $f$ exists. 
\end{conj}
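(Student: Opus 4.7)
The plan is to follow Shokurov's strategy for pl-flips, adapted to positive characteristic as in Hacon--Xu and Birkar. Since $f$ is a pl-flipping contraction with $-T$ $f$-ample, the existence of the flip is equivalent to the finite generation over $Z$ of the relative log canonical algebra
\[
\mathcal{R}(Y/Z, K_Y+T+B) := \bigoplus_{m \geq 0} f_* \mathcal{O}_Y(\lfloor m(K_Y+T+B)\rfloor).
\]
By the standard reduction (see e.g.\ the arguments in \cite{6authors} and the strategy reviewed in \cite{Birkar}), finite generation of $\mathcal{R}(Y/Z, K_Y+T+B)$ can be reduced to finite generation of the restricted algebra on the plt center $T$. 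This is why the hypothesis on $T$ is so central: once we work on $T$, adjunction plus the surface theory can be brought to bear.

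The first step would be to use the assumption that $\nu: T^N \to T$ is a universal homeomorphism to transfer the problem to $T^N$. By Lemma~\ref{geom-homeo} and a pushforward-of-algebras argument (as in the proof of Lemma~\ref{lemma-cont}), the finite generation of the restricted algebra on $T$ is equivalent to that of the corresponding algebra on $T^N$. By adjunction, $(T^N, \mathrm{Diff}_{T^N}(B))$ is a klt log surface, and by Theorem~\ref{thm:bpf} applied to the Tanaka surface base-point-free theorem (\cite[Theorem~6.9]{Tanaka}), the log canonical algebra on the surface $T^N$ is always finitely generated relative to the image of $T^N$ in $Z$. So the surface side of the problem is under control.

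The essential step is then the \emph{lifting} of sections from $T$ (equivalently $T^N$) back to $Y$: showing that the natural restriction map
\[
f_*\mathcal{O}_Y(\lfloor m(K_Y+T+B)\rfloor) \to (h\circ \nu)_* \mathcal{O}_{T^N}(\lfloor m(K_{T^N}+\mathrm{Diff}_{T^N}(B))\rfloor)
\]
is surjective for all $m$ sufficiently divisible, which in the pl setting forces finite generation upstairs from finite generation downstairs. In characteristic zero this is Kawamata--Viehweg vanishing plus the extension theorem of Hacon--M\textsuperscript{c}Kernan; in characteristic $p>5$ one instead uses $F$-purity/sharp $F$-purity of plt centers (as in Hacon--Xu and Birkar's arguments) to produce the required surjectivity via Frobenius splitting. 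Given the universal homeomorphism hypothesis on $\nu$, one can try to replace the $F$-pure structure on $T$ by the corresponding data on $T^N$, noting that Frobenius on Witt vectors commutes with universal homeomorphism (Lemma~\ref{geom-homeo}); this suggests an approach via Witt vector cohomology, combining the $W\mcO$-rationality results of Section~\ref{S-vanishing} with an analogue of the Seshadri--Mehta--Srinivas Frobenius splitting argument.

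The hard part will be the lifting step in characteristic $p \le 5$. Classical $F$-adjunction fails in these characteristics (indeed, even the normality of plt centers needed in Theorem~\ref{thm:normality} is open), so the universal-homeomorphism hypothesis on $\nu$ is a proxy for ``the adjunction works up to inseparable issues.'' My hope would be to run a Witt-vector lifting argument: instead of Frobenius-splitting $T$ inside $Y$, one attempts to split $W\mathcal{O}_Y \twoheadrightarrow W\mathcal{O}_T$ (or a truncated version thereof) using the vanishing provided by Propositions~\ref{pl-cont-Witt}--\ref{flip-Witt-vanish}. The main technical obstruction I foresee is controlling the denominators: the splittings obtained through Witt vectors live after inverting $p$, whereas to produce integral sections that assemble into a finitely generated algebra one needs integral lifts. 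Overcoming this --- perhaps by bounding the exponent of $p$-torsion in the obstruction sheaf uniformly in $m$, in the spirit of \cite[Theorem~2.4]{BBE} --- is where I expect the real content of a proof of the conjecture to lie.
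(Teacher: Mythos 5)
You are being asked to ``prove'' a statement that the paper does not prove: Conjecture~\ref{conj-flip} is stated \emph{as a conjecture}, and the authors explicitly remark that it is open in general. What is known is that for $p>5$ it follows from Theorem~\ref{thm:flip} (Birkar, via Hacon--Xu), which does not actually invoke the universal-homeomorphism hypothesis at all --- in that regime plt centers are normal (Theorem~\ref{thm:normality}), so the hypothesis is automatic and the full strength of the existence of klt flips is available. The point of the conjecture is precisely the range $p\le 5$, where neither the normality of plt centers nor the existence of flips is known; the authors include the universal-homeomorphism assumption because their $W\mathcal{O}$-arguments (Propositions~\ref{pl-cont-bije}, \ref{pl-flip-bije}) show it propagates along a pl-MMP, making it a natural weakening of normality to demand of the flipping contraction.

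Your proposal is an accurate survey of the standard pl-flip strategy and of the difficulties it meets below characteristic $7$, and you are commendably honest that you are not closing the argument. But it is not a proof, and it contains no step that the paper does not already acknowledge as open. Concretely: (i) the reduction to finite generation of the restricted algebra on the plt center is standard and fine; (ii) transferring from $T$ to $T^N$ using the universal-homeomorphism hypothesis is plausible at the level of section rings (the relevant pushforwards agree after a finite purely inseparable base change), though Lemma~\ref{geom-homeo} as stated is about $W\mathcal{O}_{\mathbb{Q}}$, not about the $\mathcal{O}$-module algebras you actually need, so this step requires its own argument; (iii) the genuine gap, which you correctly locate, is the lifting of sections from the center back to $Y$. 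Your suggested route via Witt-vector splittings has exactly the obstruction you name: $W\mathcal{O}_{\mathbb{Q}}$-level vanishing kills $p$-power torsion but produces no integral sections, and there is no known mechanism for bounding the $p$-torsion uniformly in $m$ so as to recover the integral finite generation statement. This is precisely why the paper leaves the statement as a conjecture and uses it as a hypothesis (Theorem~\ref{main-normal}, Theorem~\ref{thm:235}) rather than proving it; what you have written is a plausible reconnaissance of the problem, not a solution.
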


\noindent
We note that Conjecture \ref{conj-flip} is known for $p >5$. 
More generally, the existence of flips is known for $\mbQ$-factorial dlt threefolds over 
a perfect field of characteristic $p > 5$ (cf.\ Theorem \ref{thm:flip}).

\begin{thm}\label{main-normal}
Fix a perfect field $k$ of characteristic $p>0$ and 
assume that Conjecture \ref{conj-flip} holds for $k$. 

Let $(X, \Delta)$ be a $\Q$-factorial dlt threefold over $k$. 
Then the following assertions hold. 
\begin{enumerate}
\item $X$ has $W \mcO$-rational singularities. 
\item For any irreducible component $S$ of $\llcorner \Delta\lrcorner$, 
its normalization morphism $\nu:S^N \to S$ is a universal homeomorphism. 
\end{enumerate}
\end{thm}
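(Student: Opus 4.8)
The plan is to prove (1) and (2) simultaneously, by running a single minimal model program over $X$ all of whose steps are pl-contractions and pl-flips. Since $X$ is $\mbQ$-factorial, $K_X+\Delta$ is $\mbQ$-Cartier, so by the definition of dlt we may choose a log resolution $\mu:W\to X$ of $(X,\Delta)$ for which $a_{E_j}(X,\Delta)>-1$ holds for every $\mu$-exceptional prime divisor $E_j$. Set $\Gamma:=\mu_*^{-1}\Delta+\sum_j E_j$. Then $(W,\Gamma)$ is log smooth, hence $\mbQ$-factorial dlt, and if we write $K_W+\Gamma=\mu^*(K_X+\Delta)+F$ then $F=\sum_j(1+a_{E_j}(X,\Delta))E_j$ has strictly positive coefficients, so $F$ is effective, $\mu$-exceptional, and $\Supp F=\Ex(\mu)\subseteq\llcorner\Gamma\lrcorner$. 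On $(W,\Gamma)$ both conclusions are immediate: $W$ is smooth, so it has $W\mcO$-rational singularities via the identity resolution, and every irreducible component of $\llcorner\Gamma\lrcorner$ is a smooth, hence normal, divisor.

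Next I would run a $(K_W+\Gamma)$-MMP over $X$ and observe that it consists only of pl-divisorial contractions and pl-flips. Indeed, at every intermediate model $(W_i,\Gamma_i)$ one has $K_{W_i}+\Gamma_i\sim_{\mbQ,X}F_i$, where $F_i\ge 0$ is the strict transform of $F$ and is still supported on $\llcorner\Gamma_i\lrcorner$; hence any $(K_{W_i}+\Gamma_i)$-negative extremal ray $R$ over $X$ satisfies $F_i\cdot R<0$, so some component $S_0$ of $\llcorner\Gamma_i\lrcorner$ has $S_0\cdot R<0$, and the associated contraction is a pl-contraction. The existence of these contractions is provided by Lemma \ref{lemma-cont} (with the cone structure coming from Lemma \ref{lemma-cone}), and the existence of the flips by Conjecture \ref{conj-flip}; both inputs require precisely that the contracted component $S_0$ has a universal homeomorphism as normalization, which is the inductive hypothesis (2) on the current model. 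Termination follows from special termination (Theorem \ref{special termination}) applied with the reduced boundary $\llcorner\Gamma\lrcorner$, since every flipped curve lies in the exceptional locus over $X$, which is contained in $\Supp F_i$; and $\mu$ has only finitely many exceptional divisors, so only finitely many divisorial contractions occur. When the program stops we obtain $(W_N,\Gamma_N)\to X$ with $K_{W_N}+\Gamma_N\sim_{\mbQ,X}F_N$ nef over $X$, whence $F_N=0$ by the negativity lemma; thus every exceptional divisor of $\mu$ has been contracted, $W_N\to X$ is small, and --- $X$ being $\mbQ$-factorial and normal --- it is an isomorphism, so $(W_N,\Gamma_N)=(X,\Delta)$.

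It then remains to check that each pl-step preserves both (1) and (2). For a pl-divisorial contraction $W_i\to W_{i+1}$, Proposition \ref{pl-cont-Witt}(2) propagates $W\mcO$-rationality and Proposition \ref{pl-cont-bije} propagates (2), the components of $\llcorner\Gamma_{i+1}\lrcorner$ being exactly the images of those of $\llcorner\Gamma_i\lrcorner$. For a pl-flip $W_i\dashrightarrow W_{i+1}$ over $Z_i$: applying Proposition \ref{pl-cont-Witt}(2) to the flipping contraction $W_i\to Z_i$ shows that $Z_i$ has $W\mcO$-rational singularities, Proposition \ref{flip-Witt-vanish} then gives the same for $W_{i+1}$, and case (II) of Claim \ref{claim:E_2^{i, j}} inside its proof also yields $R^1(W_{i+1}\to Z_i)_*(W\mcO_{W_{i+1},\mbQ})=0$. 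Feeding this vanishing, together with the facts that the components of $\llcorner\Gamma_{i+1}\lrcorner$ are regular in codimension one by Lemma \ref{R1} (after the standard reduction to a plt pair) and that their images in $Z_i$ have universal-homeomorphism normalizations by Proposition \ref{pl-cont-bije} applied to $W_i\to Z_i$, into Proposition \ref{pl-flip-bije}, propagates (2) across the flip. Combining the base case on $(W,\Gamma)$ with induction along the chain gives (1) and (2) for $(X,\Delta)=(W_N,\Gamma_N)$.

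The crux, and the reason (1) and (2) cannot be proved in isolation, is the circularity that must be dissolved inside the MMP: to run the program at all one needs, at each flipping contraction, that the contracted plt center satisfies the universal-homeomorphism hypothesis of Conjecture \ref{conj-flip} (and of Lemma \ref{lemma-cont}), which is assertion (2) on the current model; yet propagating (2) across a flip via Proposition \ref{pl-flip-bije} requires the Witt-cohomology vanishing $R^1g_*(W\mcO_{\bullet,\mbQ})=0$, which is the $W\mcO$-rationality content of assertion (1). Transporting both inductive statements simultaneously through the same chain of pl-steps is the heart of the argument; the rest is the bookkeeping of which of the earlier propositions applies at which step.
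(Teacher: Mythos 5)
Your proposal is correct and is essentially the same argument as the paper's: run a $(K_W+\Gamma)$-MMP over $X$ from a log resolution, observe it is an $F$-MMP (hence each step is a pl-contraction with ray supported on an exceptional component of $\llcorner\Gamma\lrcorner$), propagate the two inductive conditions via Propositions \ref{pl-cont-Witt}, \ref{pl-cont-bije}, \ref{flip-Witt-vanish}, \ref{pl-flip-bije} (with Lemma \ref{R1} for R1 and the $E_2^{1,0}$-vanishing from the proof of Proposition \ref{flip-Witt-vanish} feeding Proposition \ref{pl-flip-bije}), terminate by special termination, and note the output is $X$ itself by negativity and $\Q$-factoriality. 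The only cosmetic difference is that the paper first reduces to $\Delta$ a single prime divisor (for (2)) or $\Delta=0$ (for (1)), whereas you keep all of $\Delta$ and handle both assertions in one MMP; your version is if anything slightly cleaner, and you make explicit a couple of steps (e.g.\ applying Proposition \ref{pl-cont-bije} and Proposition \ref{pl-cont-Witt}(2) to the flipping contraction to verify the hypotheses on $Z_i$) that the paper leaves implicit.
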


\begin{proof}
Since $X$ is $\Q$-factorial, 
we may assume that $\Delta$ is a prime divisor for the proof of (2), and $\Delta = 0$ for (1). 
Set $S := \Delta$. 

Let $f:Y \to X$ be a log resolution of $(X, S)$. 
Then we may write 
\[
K_Y+S_Y+E=f^*(K_X+S)+E' \sim _{\mbQ, X} E', 
\]
where $S_Y$ is the strict transform of $S$, $E$ is the sum of all the $f$-exceptional prime divisors, and 
$E'$ is an effective $f$-exceptional $\Q$-divisor. 
Note that $\Ex(f)=\Supp E'$ as $(X, \Delta)$ is plt. 
We shall inductively construct a $(K_Y+S_Y+E)$-MMP over $X$ 
\[
(Y, S_Y+E)=:(Y_0, S_0+E_0) \overset{g_0}\dashrightarrow (Y_1, S_1+E_1) 
\overset{g_1}\dashrightarrow \cdots, 
\]
with the following two conditions:
\begin{enumerate}
\item[(a)$_j$] $Y_j$ has $W \mcO$-rational singularities. 
\item[(b)$_j$] The normalization of each irreducible component of $S_j+E_j$ is 
a universal homeomorphism. 
\end{enumerate}
Further, we shall prove that this MMP terminates and ends with $X$ itself. 
We note that the conditions (a)$_0$ and (b)$_0$ are satisfied since $(Y_0, S_0+E_0)$ is log smooth. 

Suppose that we have already constructed $(Y_j, S_j+E_j)$ with the conditions (a)$_j$ and (b)$_j$. 
If $K_{Y_j}+S_j+E_j$ is nef over $X$, then $Y_j = X$ holds by 
the negativity lemma and the $\Q$-factoriality of $X$. 
Suppose that $K_{Y_j}+S_j+E_j$ is not nef over $X$. 
Since $K_{Y_j}+S_j+E_j$ is pseudo-effective over $X$, by Lemma \ref{lemma-cone}, 
we can find a $(K_{Y_j}+S_j+E_j)$-negative extremal ray $R$ of $\overline{NE}(Y_i/X)$. 
Since the condition (b)$_j$ holds and $F \cdot R < 0$ holds for some component $F$ of $E_j$, 
the assumptions in Lemma \ref{lemma-cont} are satisfied. 
Hence, we obtain a projective birational morphism $h:Y_j \to Z$ over $X$ corresponding to $R$. 
Then Proposition \ref{pl-cont-Witt} and Proposition \ref{pl-cont-bije} ensure that 
the new pair $(Z, h_* S_j +h_* E_j )$ satisfies the conditions (a) and (b). 
Hence, if $h$ is a divisorial contraction, 
it is sufficient to set
\[
Y_{j+1}:=Z, \quad S_{j+1}:=h_* S_j, \quad E_{j+1}:=h_* E_j
\]
for our purpose. 
Suppose that $h$ is a flipping contraction. 
Since $(Y_j, S_j+E_j)$ satisfies the condition (b)$_j$, we may apply Conjecture \ref{conj-flip}, 
and a flip $h^+:Y_{j+1} \to Z$ of $h$ exists. 
Set $S_{j+1}$ and $E_{j+1}$ to be the proper transforms on $Y_{j+1}$ of $S_j$ and $E_j$, respectively. 
Then, the new pair $(Y_{j+1}, S_{j+1}+E_{j+1})$ satisfies 
the condition (a)$_{j+1}$ by Proposition \ref{flip-Witt-vanish}. 
Further, by Lemma \ref{R1}, each irreducible component of $S_{j+1}+E_{j+1}$ is regular in codimension one. 
Thus we may apply Proposition \ref{pl-flip-bije} and the condition (b)$_{j+1}$ is also satisfied. 

We note that this MMP terminates by the special termination (Theorem \ref{special termination}).
Since this MMP ends with $X$, the assertions (1) and (2) follow 
from the conditions (a)$_j$ and (b)$_j$ respectively. 
\end{proof}

In characteristic $p\leq 5$, we obtain the following reduction result 
by the same proof as in \cite[Theorem 4.3.7]{Fujino:ST}. 

\begin{thm}\label{thm:235}
If Conjecture \ref{conj-flip} holds, then a flip of every flipping contraction exists. 
In other words, when we show the existence of pl-flips for $\Q$-factorial plt threefolds $(X, S+B)$ 
with $\llcorner S+B\lrcorner=S$, 
we may assume that the normalization of $S$ is a universal homeomorphism. 
\end{thm}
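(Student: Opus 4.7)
The plan is to adapt Fujino's reduction of flips to pl-flips \cite[Theorem 4.3.7]{Fujino:ST} to the present setting, using the tools of Subsection \ref{Sub-bije-pl-cont} to maintain condition (b) throughout the argument.

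First, the classical reduction shows that the existence of general flips for $\mbQ$-factorial dlt threefolds over a perfect field reduces, via a log resolution and an MMP combined with special termination (Theorem \ref{special termination}), to the existence of pl-flips $f:(X, S+B) \to Z$ with $\llcorner S+B \lrcorner = S$ a prime divisor. The ingredients Fujino uses (cone theorem, pl-contraction theorem, special termination, MMP with scaling) are all available in our setting by Lemma \ref{lemma-cone}, Lemma \ref{lemma-cont}, Theorem \ref{special termination}, and Theorem \ref{mmp/a field}, so this step transfers to positive characteristic essentially verbatim.

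To further reduce such a pl-flipping contraction to the case in which the normalization of $S$ is a universal homeomorphism, I would mimic the proof of Theorem \ref{main-normal}. Take a log resolution $h:Y \to X$ of $(X, S+B)$, let $S_Y$ be the strict transform of $S$ and $E$ the sum of all $h$-exceptional prime divisors, and consider the dlt pair $(Y, S_Y + (1-\epsilon)E)$ for a small $\epsilon>0$. Since $Y$ is smooth, condition (b) holds trivially and $Y$ is $W\mcO$-rational. Run a $(K_Y + S_Y + (1-\epsilon)E)$-MMP over $X$ (or over $Z$); each step is a pl-contraction, and condition (b) propagates: Proposition \ref{pl-cont-bije} handles the divisorial contractions, while Proposition \ref{pl-flip-bije} handles the pl-flips, whose existence is supplied by Conjecture \ref{conj-flip} precisely because condition (b) holds inductively at each stage. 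The $W\mcO$-rationality hypothesis required by Proposition \ref{pl-flip-bije} is maintained thanks to Proposition \ref{pl-cont-Witt} and Proposition \ref{flip-Witt-vanish}, starting from the smooth model $Y$. Special termination guarantees that the MMP terminates, and from the resulting model one extracts the flip of the original $f$.

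The main obstacle is the simultaneous propagation of the two conditions throughout the MMP: condition (b), so that each subsequent pl-flip falls under the scope of Conjecture \ref{conj-flip}, and $W\mcO$-rationality, so that Proposition \ref{pl-flip-bije} remains applicable. These two properties travel together — exactly as in the proof of Theorem \ref{main-normal} — and the careful inductive bookkeeping needed to verify both at every step is the technical heart of the argument.
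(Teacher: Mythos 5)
Your approach is essentially the one the paper intends: it simply cites Fujino's reduction-to-pl-flips argument \cite[Theorem 4.3.7]{Fujino:ST} and relies implicitly on the fact (established by the machinery of Subsections \ref{Sub-Witt-pl-cont}--\ref{Sub-bije-pl-cont}, and made explicit in the proof of Theorem \ref{main-normal}) that conditions (a) and (b) propagate along the pl-MMP starting from a log resolution, so that every pl-flip encountered falls under Conjecture \ref{conj-flip}. You identify exactly this as the technical heart, which is correct.

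One detail is off, though, and it matters for the pl-structure: you take the boundary on the log resolution to be $S_Y + (1-\epsilon)E$. With that choice $\llcorner S_Y + (1-\epsilon)E \lrcorner = S_Y$, so a step of the MMP whose extremal ray is negative only on components of $E$ is not a pl-contraction, and Lemma \ref{lemma-cont}, Proposition \ref{pl-cont-Witt} and Proposition \ref{pl-cont-bije} no longer apply. In Fujino's reduction and in the proof of Theorem \ref{main-normal} the exceptional divisors are taken with coefficient one, i.e.\ the boundary is $S_Y + E$, precisely so that $\llcorner \,\cdot\, \lrcorner$ contains every $h$-exceptional prime divisor and each MMP step is automatically pl. Replacing $(1-\epsilon)E$ by $E$ fixes this and brings your argument in line with the paper.
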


In characteristic $p>5$, any klt threefold 
(which is not necessarily $\mathbb{Q}$-factorial) has $W\MO$-rational singularities. 

\begin{thm}\label{thm:WO}
Let $(X, \Delta)$ be a klt threefold over a perfect field of characteristic $p>5$. 
Then $X$ has $W \mcO$-rational singularities. 
\end{thm}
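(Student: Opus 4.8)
The plan is to deduce Theorem~\ref{thm:WO} from Theorem~\ref{main-normal} together with the fact that Conjecture~\ref{conj-flip} holds in characteristic $p>5$ (this is recorded right after the statement of the conjecture, and follows from Theorem~\ref{thm:flip}). The only gap between the two statements is that Theorem~\ref{main-normal} assumes $(X,\Delta)$ is $\mathbb{Q}$-factorial dlt, whereas here $(X,\Delta)$ is merely klt. So the entire proof is a reduction of the general klt case to the $\mathbb{Q}$-factorial dlt case, exploiting that $W\mathcal{O}$-rationality is an intrinsic property of $X$ (it does not refer to $\Delta$ at all).

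First I would take a $\mathbb{Q}$-factorialization $f\colon Y\to X$, which exists by the $\mathbb{Q}$-factorialization theorem proved earlier in Subsection~2.3 (valid for klt threefolds over a perfect field with $p>5$); here $f$ is a small projective birational morphism with $Y$ being $\mathbb{Q}$-factorial, and setting $\Delta_Y:=f^{-1}_*\Delta$ the pair $(Y,\Delta_Y)$ is klt, hence dlt. By Theorem~\ref{main-normal}~(1), $Y$ has $W\mathcal{O}$-rational singularities, so choosing a resolution $\psi\colon V\to Y$ (which exists by \cite{CP}) we have $R^i\psi_*(W\mathcal{O}_{V,\mathbb{Q}})=0$ for $i>0$. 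It then remains to descend this vanishing along the small morphism $f$ to the resolution $\varphi:=f\circ\psi\colon V\to X$.

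For the descent I would run the Grothendieck spectral sequence $E_2^{i,j}=R^if_*R^j\psi_*(W\mathcal{O}_{V,\mathbb{Q}})\Rightarrow R^{i+j}\varphi_*(W\mathcal{O}_{V,\mathbb{Q}})$. Since $R^j\psi_*(W\mathcal{O}_{V,\mathbb{Q}})=0$ for $j>0$, this collapses to $R^if_*(\psi_*(W\mathcal{O}_{V,\mathbb{Q}}))=R^if_*(W\mathcal{O}_{Y,\mathbb{Q}})\cong R^i\varphi_*(W\mathcal{O}_{V,\mathbb{Q}})$ for all $i$. Because $f$ is small, every fiber of $f$ is at most zero-dimensional, so Lemma~\ref{higher-fiber} gives $R^if_*(W\mathcal{O}_{Y,\mathbb{Q}})=0$ for $i>0$ (note $R^if_*(W\mathcal{O}_Y)=0$ for $i>0$ follows from $\max_y\dim f^{-1}(y)=0$, and then pass to the quotient category via Lemma~\ref{lem:Q}, or use Remark~\ref{cr-remark}). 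Hence $R^i\varphi_*(W\mathcal{O}_{V,\mathbb{Q}})=0$ for $i>0$, which is exactly condition~(a) for $X$, so $X$ has $W\mathcal{O}$-rational singularities.

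I do not expect a serious obstacle here: the argument is a routine combination of the $\mathbb{Q}$-factorialization theorem, Theorem~\ref{main-normal}, and the fiber-dimension vanishing Lemma~\ref{higher-fiber}. The only point requiring a little care is the interplay between $W\mathcal{O}_{Y,\mathbb{Q}}$ viewed as an object of the quotient category $\mathcal{A}(Y)_{\mathbb{Q}}$ and the honest sheaf $W\mathcal{O}_Y$; one should invoke $R^if_*(W\mathcal{O}_{Y,\mathbb{Q}})\simeq (R^if_*(W\mathcal{O}_Y))_{\mathbb{Q}}$ from Remark~\ref{cr-remark}~(2) so that the classical fiber-dimension vanishing for quasi-coherent sheaves on $W_nY$ (used in Lemma~\ref{higher-fiber}) applies. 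A cosmetic alternative is to absorb the whole reduction by observing that $\varphi=f\circ\psi$ is itself a resolution of $X$ and that $W\mathcal{O}$-rationality is independent of the chosen resolution (Remark~\ref{rmk:any}), so it suffices to verify the vanishing for this particular $\varphi$, which is what the spectral sequence computation above does.
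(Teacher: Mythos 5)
There is a genuine gap in your argument: a small morphism of threefolds need \emph{not} have zero-dimensional fibers. ``Small'' means that $\Ex(f)$ has codimension $\geq 2$ in the source $Y$; for a $\mathbb{Q}$-factorialization of a threefold $X$, the exceptional locus is typically a union of \emph{curves} in $Y$, and the fiber over a non-$\mathbb{Q}$-factorial point of $X$ is one-dimensional. With $m=\max_y\dim f^{-1}(y)=1$, Lemma~\ref{higher-fiber} only gives $R^if_*(W\mathcal{O}_Y)=0$ for $i\geq 2$. The case $i=1$, which is exactly the content you need to descend $W\mathcal{O}$-rationality from $Y$ to $X$, is left completely open. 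You appear to have conflated ``small'' with ``finite.''

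The paper avoids exactly this obstacle by choosing a \emph{different} birational modification. Instead of the small $\mathbb{Q}$-factorialization, it invokes Proposition~\ref{MFS-special} (with $X=Z$) to produce a projective birational $h\colon Y\to X$ such that $Y$ is $\mathbb{Q}$-factorial klt, $h$ is an isomorphism away from the finitely many non-$\mathbb{Q}$-factorial points, and --- crucially --- the exceptional locus over a fixed bad point $z$ is a \emph{divisor} $E=h^{-1}(z)_{\red}$ which is a surface of Fano type. Because $h$ is an isomorphism outside $\Zeros(I_E)$, the Berthelot--Bloch--Esnault vanishing (Proposition~\ref{prop:BBE}) applies to give $R^ih_*(WI_{E,\mathbb{Q}})=0$ for $i>0$, and then the exact sequence $0\to WI_{E,\mathbb{Q}}\to W\mathcal{O}_{Y,\mathbb{Q}}\to W\mathcal{O}_{E,\mathbb{Q}}\to 0$ reduces the desired vanishing to $R^ih_*(W\mathcal{O}_{E,\mathbb{Q}})=0$, which follows from Proposition~\ref{surface-Witt} because $E$ is a log del Pezzo surface. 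Note also that there is no direct route via Proposition~\ref{flip-Witt-vanish}: it goes from $W\mathcal{O}$-rationality of the \emph{target} to that of the \emph{source}, the opposite direction to what your reduction requires. To repair your argument you would need to establish $R^1f_*(W\mathcal{O}_{Y,\mathbb{Q}})=0$ for the small map $f$ by some other means, and the natural such means is precisely the paper's strategy of replacing $f$ by a divisorial plt blow-up.
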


\begin{proof}
The assertion is proved in Theorem~\ref{main-normal} when $X$ is $\Q$-factorial. 
Thus it suffices to find a proper birational morphism $g:Y \to X$ 
from a $\Q$-factorial klt threefold satisfying $R^ig_*W\MO_{Y, \Q}=0$ for $i >0$. 
Since the problem is local and the non-$\mbQ$-factorial points on $X$ are isolated, 
we may assume that $X$ contains the unique non-$\mbQ$-factorial point $z$. 

By Proposition~\ref{MFS-special}, 
we can find a projective birational morphism $h:Y \to X$ with the following conditions:
\begin{itemize}
\item $Y$ is $\mbQ$-factorial and klt, 
\item $h$ is isomorphic over $X \setminus \{ z \}$. 
\item $E := h^{-1}(z)_{\text{red}}$ is a surface of Fano type. 
\end{itemize}

\noindent
By the following short exact sequence
\[
0 \to W I_{E, \mbQ} \to W \mcO _{Y, \mbQ} \to W \mcO _{E, \mbQ} \to 0 
\]
and the vanishing $R ^i h_* (W I_{E, \mbQ}) = 0$ for $i > 0$ (Proposition \ref{prop:BBE}), 
we obtain 
\[
R^ih_*(W\MO_{Y, \Q}) \cong R^ih_*(W\MO_{E, \Q})
\]
for $i > 0$. 
The latter sheaf $R^ih_*(W\MO_{E, \Q})$ is zero by Proposition~\ref{surface-Witt}. 
\end{proof}

\subsection{Examples}\label{subsection:non-rational_klt}
In this subsection, we give two examples on rational and $W\mcO$-rational singularities. 
Before we construct the first example, we show an auxiliary lemma. 

\begin{lem}\label{lemma-homeo-WO}
Let $f:X \to Y$ be a proper birational morphism of normal surfaces 
over an algebraically closed field of characteristic $p>0$. 
Assume that $E:={\rm Ex}(f)$ is a rational curve whose normalization is a universal homeomorphism. 
Then $R^if_*(W\MO_{X, \mathbb{Q}})=0$ for $i>0$. 
\end{lem}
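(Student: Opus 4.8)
The plan is to follow the pattern of the proof of Theorem~\ref{thm:WO} (and of Proposition~\ref{pl-cont-Witt}): reduce the Witt-vanishing on the surface $X$ to a Witt-vanishing on the exceptional curve, and then replace that curve by $\mbP^1$ using the universal homeomorphism hypothesis.

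First I would record that, since $f$ is a proper birational morphism of normal surfaces whose exceptional locus $E=\Ex(f)$ is an irreducible curve, standard facts (normality of $Y$, $f_*\MO_X=\MO_Y$, and Zariski's main theorem) show that $f$ contracts $E$ to a single closed point $y\in Y$ and is an isomorphism over $Y\setminus\{y\}$, so that $f$ is an isomorphism outside $\Zeros(I_E)$, where $I_E\subset\MO_X$ denotes the ideal sheaf of the reduced curve $E$. Hence Proposition~\ref{prop:BBE} gives $R^if_*(WI_{E,\Q})=0$ for all $i>0$, and plugging this into the short exact sequence
\[
0 \to WI_{E,\Q} \to W\MO_{X,\Q} \to \iota_*W\MO_{E,\Q} \to 0,
\]
with $\iota:E\hookrightarrow X$ the inclusion, yields $R^if_*(W\MO_{X,\Q})\cong R^i(f\circ\iota)_*(W\MO_{E,\Q})$ for every $i>0$.

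Next I would bring in the normalization $\nu:\mbP^1\to E$. As it is a finite universal homeomorphism, Lemma~\ref{geom-homeo} gives $\nu_*W\MO_{\mbP^1,\Q}\cong W\MO_{E,\Q}$, and $R^j\nu_*(W\MO_{\mbP^1,\Q})=0$ for $j>0$ by Lemma~\ref{higher-fiber} and Lemma~\ref{lem:Q}; the Grothendieck spectral sequence for $\mbP^1\xrightarrow{\nu}E\xrightarrow{f\circ\iota}Y$ then upgrades the last isomorphism to $R^if_*(W\MO_{X,\Q})\cong R^i(f\circ\iota\circ\nu)_*(W\MO_{\mbP^1,\Q})$ for $i>0$. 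Since $f\circ\iota\circ\nu$ maps $\mbP^1$ onto the closed point $y$, the sheaf of abelian groups $R^i(f\circ\iota\circ\nu)_*(W\MO_{\mbP^1})$ is concentrated at $y$ with stalk $H^i(\mbP^1,W\MO_{\mbP^1})$; and because $\mbP^1$ is rational we have $H^i(\mbP^1,\MO_{\mbP^1})=0$ for $i>0$, hence $H^i(\mbP^1,W\MO_{\mbP^1})=0$ for $i>0$ by Lemma~\ref{usual-Witt-higher}. Passing to the quotient category (Remark~\ref{cr-remark}) gives $R^i(f\circ\iota\circ\nu)_*(W\MO_{\mbP^1,\Q})=0$ for $i>0$, which is what we wanted.

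The only points requiring care are bookkeeping: that the Witt functor commutes with $\iota_*$ so the displayed sequence is legitimate, and the distinction between $W(-)_\Q$ and $W(-)\otimes_{\mbZ}\Q$ in Remark~\ref{cr-remark} when concluding that an object of $\mathcal{A}(Y)_\Q$ represented by a sheaf concentrated at a point with zero stalk vanishes. Neither is a genuine obstacle. The one essential ingredient beyond routine manipulation is Lemma~\ref{geom-homeo}, used to pass from the possibly singular rational curve $E$ to $\mbP^1$: this is exactly where the universal homeomorphism hypothesis is consumed, and without it $W\MO_{E,\Q}$ could be a proper subobject of $\nu_*W\MO_{\mbP^1,\Q}$ and the reduction would break down. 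I regard this as the crux of the argument.
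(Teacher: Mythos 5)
Your proof is correct and takes essentially the same route as the paper's: reduce to $E$ via Proposition~\ref{prop:BBE} and the ideal-sheaf short exact sequence, pass to $\mbP^1$ via Lemma~\ref{geom-homeo} using the universal-homeomorphism hypothesis, and conclude by $H^i(\mbP^1,\MO_{\mbP^1})=0$ together with Lemma~\ref{usual-Witt-higher} and Lemma~\ref{lem:Q}. The only difference is that you unpack the paper's compressed isomorphism $R^if_*(W\MO_{X,\Q})\cong H^i(E,W\MO_{E,\Q})$ into explicit pushforwards and a Grothendieck spectral sequence, but the content is the same.
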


\begin{proof}
By Proposition \ref{prop:BBE}, 
we obtain 
\[
R^if_*(W\MO_{X, \mathbb{Q}}) \cong H^i(E, W\MO_{E, \mathbb{Q}}). 
\]
Let $\nu:\mathbb P^1 \to E$ be the normalization. 
It follows from Lemma \ref{geom-homeo} 
that $\nu_*W\MO_{\mathbb P^1, \mathbb{Q}}=W\MO_{E, \mathbb{Q}}$. 
Therefore, by Lemma \ref{usual-Witt-higher} and Lemma \ref{lem:Q}, we obtain 
\[
H^i(E, W\MO_{E, \mathbb{Q}}) \cong 
H^i(\mathbb P^1, W\MO_{\mathbb P^1, \mathbb{Q}})
= 0
\]
for $i>0$. 
\end{proof}

The first example is a two-dimensional $W\mcO$-rational singularity 
which is not a rational singularity (cf.\ \cite[Section~4.6]{CR2}). 

\begin{example}
Let $f:X \to Y$ be a proper birational morphism from a smooth surface to a normal surface 
over an algebraically closed field of characteristic $p>0$
such that $\Ex(f)$ is an irreducible cuspidal cubic curve. 
Then, the singularity of $Y$ is $W\MO$-rational by Lemma~\ref{lemma-homeo-WO}, 
but not a rational singularity. 
\end{example}

Next, we give an example which is klt but with non-rational singularities. 
We give an example in characteristic two. 
It is based on an example of weak del Pezzo surfaces $X$ with 
$H^1(X, \MO_X) \not = 0$ 
which was given by Schr{\"o}er 
(Theorem 8.1, 8.2 and Section 9 in \cite{schroer}. See also \cite{maddock}).

\begin{prop}\label{ex:dim4}
The following assertions hold. 
\begin{enumerate}
\item{For an arbitrary imperfect field $k$ of characteristic two, 
there exists a three-dimensional klt pair $(Z, \Delta_Z)$ over $k$ 
such that $R^1g_*\MO_{Y} \neq 0$ for a resolution $g:Y \to Z$ and $Z$ is the cone of some 
log canonical pair $(X, \Delta)$.}
\item{For an arbitrary perfect field $k$ of characteristic two, 
there exists a four-dimensional klt pair $(Z', \Delta_Z')$ over $k$ 
such that $R^1g'_*\MO_{Y'} \neq 0$ for a resolution $g':Y' \to Z'$. }
\end{enumerate}
\end{prop}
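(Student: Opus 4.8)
The plan is to exhibit both examples as anticanonical cones over the weak del Pezzo surfaces of Schr\"oer, the non-vanishing of $H^1(\MO)$ on these surfaces being the source of the non-rationality. For (1), take an imperfect field $k$ of characteristic two and a weak del Pezzo surface $X$ over $k$ with $H^1(X,\MO_X)\neq 0$ as constructed in \cite[Theorems 8.1, 8.2, \S 9]{schroer} (cf.\ \cite{maddock}); thus $X$ is a regular projective surface with $-K_X$ nef and big, and $(X,0)$ is a log canonical (in fact klt) pair. Since $-K_X$ is semiample, let $\varphi\colon X\to\overline X$ be the anticanonical morphism (contracting the $K_X$-trivial curves) and put $Z:=\Spec_k\bigoplus_{m\geq 0}H^0(X,\MO_X(-mK_X))$, the affine cone over $(X,-K_X)$, which coincides with the affine cone over $(\overline X,-K_{\overline X})$; set $\Delta_Z:=0$. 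To see $(Z,0)$ is klt, first note that the curves contracted by $\varphi$ have discrepancy $0$ over $\overline X$ (their intersection matrix is negative definite and $\varphi^{*}K_{\overline X}$ is numerically trivial on them), so $\overline X$ has canonical singularities; and the exceptional divisor $E\cong\overline X$ of the blow-up of the vertex has discrepancy $0$ by adjunction, since $K_E=(K_W+E)|_E$ while $\MO_W(E)|_E\cong\MO_{\overline X}(K_{\overline X})$ and $K_E=K_{\overline X}$. Hence $Z$ is canonical, in particular $(Z,0)$ is klt, and it is the cone of the log canonical pair $(X,0)$.

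For the non-vanishing in (1), take $g\colon Y\to Z$ to be $\rho\colon W:=\Spec_X\bigoplus_{m\geq 0}\MO_X(-mK_X)=\operatorname{Tot}_X(\MO_X(K_X))\to Z$; here $W$ is a regular threefold and $\rho$ is proper birational, an isomorphism away from the vertex, so over an imperfect field we allow it as a resolution in the sense of Remark \ref{rmk:any} (compose with a further resolution of $W$ if $Y$ smooth over $k$ is wanted). Since $W\to X$ is affine with $\bigoplus_{m\geq 0}\MO_X(-mK_X)$ as pushforward, $H^1(W,\MO_W)=\bigoplus_{m\geq 0}H^1(X,\MO_X(-mK_X))$, and as $Z$ is affine this equals $H^0(Z,R^1g_*\MO_Y)$; its degree-$0$ summand is $H^1(X,\MO_X)\neq 0$, so $R^1g_*\MO_Y\neq 0$.

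For (2), take the field in (1) to be $k(t)$ with $k$ perfect of characteristic two, so that $X$ is the generic fibre of a projective morphism $\pi\colon\mcX\to C$ over $k$, where $\mcX$ is a threefold and $C\subseteq\mbA^1_k$ is a nonempty affine open with $k(C)=k(t)$. Shrinking $C$ we may assume $\mcX$ is regular, hence smooth over $k$ since $k$ is perfect; note $K_C\sim 0$, so $-K_{\mcX}\sim -K_{\mcX/C}$ is $\pi$-semiample and $\pi$-big. Put $Z':=\Spec_k\bigoplus_{m\geq 0}H^0(\mcX,\MO_{\mcX}(-mK_{\mcX}))$, the anticanonical cone over $\mcX$ relative to $C$; as the relative anticanonical ring of a del Pezzo fibration is finitely generated (replace $-K_{\mcX}$ by a $\pi$-very ample multiple if one prefers an elementary argument, at the cost of vertex discrepancy $-1+1/m>-1$), $Z'$ is of finite type over $k$, four-dimensional, with vertex locus $\cong C$, and $Z'\to C$ has generic fibre the threefold $Z$ of (1). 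Put $\Delta_{Z'}:=0$. That $(Z',0)$ is klt follows exactly as in (1): $\mcX$ is smooth, so all discrepancies over $\mcX$ are positive, and the exceptional divisor $E\cong\mcX$ of the blow-up $W':=\operatorname{Tot}_{\mcX}(\MO_{\mcX}(K_{\mcX}))\to Z'$ of the vertex locus has discrepancy $\geq 0$ by the same adjunction computation, using that near the vertex section $K_{Z'}$ differs from the relative canonical of the cone over $C$ only by the pullback of $K_C\sim 0$. Here $W'$ is the total space of a line bundle over the smooth threefold $\mcX$, hence smooth over $k$, so $g'\colon Y':=W'\to Z'$ is a genuine resolution and no four-dimensional resolution theorem is needed. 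Finally $R^1g'_*\MO_{Y'}\neq 0$: either restrict $g'$ over the generic point of $C$ to get a resolution of $Z$ and apply (1) together with flat base change, $(R^1g'_*\MO_{Y'})\otimes_{\MO_C}k(C)\cong R^1(g'|_{Y'_\eta})_*\MO_{Y'_\eta}\neq 0$; or observe that $H^0(Z',R^1g'_*\MO_{Y'})=\bigoplus_{m\geq 0}H^1(\mcX,\MO_{\mcX}(-mK_{\mcX}))$ has degree-$0$ summand $H^1(\mcX,\MO_{\mcX})$, which is nonzero because $R^1\pi_*\MO_{\mcX}$ is a nonzero coherent sheaf on the affine curve $C$ (its stalk at the generic point is $H^1(X,\MO_X)$).

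The hard part is the klt verification over imperfect fields: the discrepancy bookkeeping for the cone over the canonical del Pezzo surface $\overline X$ in (1), and for the cone over the non-proper smooth threefold $\mcX$ in (2), where the vertex is a curve rather than a point; one also has to check finite generation of the relevant section ring in the required generality. A secondary point is confirming that Schr\"oer's construction genuinely provides such a weak del Pezzo over a \emph{finitely generated} imperfect field of characteristic two of $p$-degree one, so that the spreading-out is a threefold and the cone is four-dimensional, and — in part (1) — reading ``resolution'' in the appropriate generalized sense (quasi-resolutions, cf.\ Remark \ref{rmk:any}) when no smooth model over the imperfect field exists.
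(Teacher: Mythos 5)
Your proposal follows the same overall strategy as the paper — cone over Schr\"oer's weak del Pezzo surface, spreading out over a curve for (2) — but the technical route for the cone construction in (1) is genuinely different, and the difference matters. The paper first chooses an effective $\Q$-divisor $\Delta$ with $(X,\Delta)$ klt and $-(K_X+\Delta)$ \emph{ample}, fixes $\ell$ with $\ell(K_X+\Delta)$ Cartier and $H^i(X,\MO_X(-m\ell(K_X+\Delta)))=0$ for $i,m>0$, takes $Y=\mathrm{Tot}_X(\MO_X(\ell(K_X+\Delta)))$, contracts the zero section to get $g:Y\to Z$, and computes directly that $K_Y+\pi^*\Delta+(1-\tfrac{1}{\ell})E=g^*(K_Z+\Delta_Z)$, so $(Z,\Delta_Z)$ is klt because $(Y,\pi^*\Delta+(1-\tfrac{1}{\ell})E)$ is; the non-vanishing of $R^1g_*\MO_Y$ then comes from a descending induction on $R^ig_*\MO_Y(-mE)$ using the chosen Serre-type vanishing. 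You instead cone directly with respect to the nef-and-big $-K_X$, with zero boundary, and argue that $Z$ is canonical by passing through the anticanonical model $\overline X$. The two constructions produce different cones, and your version buys a cleaner cohomology computation (the degree-zero piece of $H^1(W,\MO_W)$ is immediately $H^1(X,\MO_X)$, with no inductive vanishing needed). But it pays for this by needing $K_{\overline X}$ to be $\Q$-Cartier (in fact Cartier, so that the cone with respect to $-K_{\overline X}$ makes sense as a variety with $\Q$-Cartier $K_Z$) and $\overline X$ to have canonical singularities in a sense that is robust over an imperfect base field — precisely the bookkeeping the paper's use of an ample $-\ell(K_X+\Delta)$ and explicit boundary is engineered to sidestep. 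You flag this as ``the hard part,'' and I agree: over an imperfect field the claims that the $K_X$-trivial contracted curves give a Gorenstein $\overline X$ and that the anticanonical ring is finitely generated are not automatic, and you do not close them. This is the one real gap; the rest of (1) (the Leray degeneration over the affine $Z$, the identification $H^0(Z,R^1g_*\MO_W)=\bigoplus_m H^1(X,\MO_X(-mK_X))$) is correct and is in fact slicker than the paper's induction. For (2) the paper simply states ``(1) implies (2)'' and stops; your spreading-out of the generic-fibre example over an affine curve $C\subset\mathbb A^1_k$ and identification of the degree-zero summand with $H^1(\mcX,\MO_{\mcX})=H^0(C,R^1\pi_*\MO_{\mcX})\neq 0$ is a sound way to supply the missing details, modulo the same finite-generation and discrepancy points you already acknowledge.
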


\begin{proof}
Since (1) implies (2), we only show (1). 
Let $k$ be an arbitrary imperfect field of characteristic two. 
By \cite[Section~9]{schroer}, there exists a regular projective surface $X$ over $k$ 
such that $-K_X$ is nef and big and $H^1(X, \MO_X) \neq 0$. 
We can find an effective $\Q$-divisor $\Delta$ 
such that $(X, \Delta)$ is klt and $-(K_X+\Delta)$ is ample. 
Fix a positive integer $\ell>0$ so that 
$\ell(K_X+\Delta)$ is Cartier and that 
$$H^i(X, \MO_X(-m\ell(K_X+\Delta)))=0$$ 
for $i>0$ and $m>0$. 
Set $\pi:Y \to X$ to be 
the $\mathbb A^1$-bundle over $X$, equipped with a section $E$, 
such that $\MO_Y(E)|_E \simeq \MO_X(\ell(K_X+\Delta))$ 
and that $Y$ is an open subset of the $\mathbb P^1$-bundle 
$\mathbb P_X(\MO_X \oplus \MO_X(\ell(K_X+\Delta)))$. 
Set $\Delta_Y:=\pi^*\Delta$. 
Then $E$ can be contracted, and we have a projective birational morphism 
$g:Y \to Z$ such that $\Ex(g)=E$ and $g(\Ex(g))$ is one point. 

Since $\big( (K_Y+\Delta_Y+E)-\frac{1}{\ell}E \big)|_E \sim_{\Q} 0$, we obtain 
$K_Y+\Delta_Y+ \big( 1-\frac{1}{\ell} \big)E \sim_{\Q} 0$.
Therefore, its push-forward $g_* \big( K_Y+\Delta_Y+(1-\frac{1}{\ell})E \big)=K_Z+\Delta_Z$ is $\Q$-Cartier, 
where $\Delta_Z:=g_*\Delta_Y$. 
It follows that 
\[
K_Y+\Delta_Y+ \big( 1-\frac{1}{\ell} \big) E=g^*(K_Z+\Delta_Z).
\]
Since there exists a log resolution of $(X, \Delta)$ and $Y$ is an $\mathbb A^1$-bundle over $X$, 
the pair $\big( Y, \Delta_Y+(1-\frac{1}{\ell})E \big)$ is klt. Therefore, $(Z, \Delta_Z)$ is also klt.

We show $R^1g_*\MO_Y \neq 0$. 
For this, it suffices to show that $R^ig_*\MO_Y(-E)=0$ for $i>0$ since we have $H^1(X, \MO_X) \neq 0$. 
We prove that $R^ig_*\MO_Y(-mE)=0$ for $m>0$ by the descending induction on $m$. 
For $m \gg 0$, we obtain $R^ig_*\MO_Y(-mE)=0$ by the Serre vanishing theorem. 
By an exact sequence: 
\[
0 \to \MO_Y(-(m+1)E) \to \MO_Y(-mE) \to \MO_Y(-mE)|_E \to 0,
\]
it is enough to show $H^i(E, \MO_Y(-mE)|_E)=0$ for $m>0$. 
This holds by 
\[
H^i(E, \MO_Y(-mE)|_E) \simeq H^i(X, \MO_X(-m\ell(K_X+\Delta)))=0.
\]
Therefore, we have $R^1g_*\MO_Y \neq 0$. 
\end{proof}

\begin{rmk}
With the same notation as in the proof of Proposition~\ref{ex:dim4}, 
we can show that 
$$R^ig_*(W\MO_{Y, \Q}) = 0$$
for any $i>0$. 
The sketch of the proof is as follows. 
Since $-E$ is $g$-ample, we can find $m \gg 0$ such that $R^ig_*\MO_Y(-mE)=0$ for $i>0$, which implies that  
$R^ig_*(WI_{E, \Q})=R^ig_*(WI_{mE, \Q})=0$ 
where $I_E$ and $I_{mE}$ are the coherent ideal sheaves of $E$ and $mE$ respectively 
(cf.\ \cite[Proposition 2.1]{BBE}). 
On the other hand, we can check that 
there exists a finite universal homeomorphism $X \to \mathbb P^2$ 
by the construction of \cite[Section~9]{schroer}. 
Thus we get $H^i(E, W\MO_{E, \Q})=H^i(X, W\MO_{X, \Q})=0$ (cf.\ Lemma \ref{geom-homeo}). 
Therefore, the required vanishing $R^ig_*(W\MO_{Y, \Q}) = 0$ follows from an exact sequence: 
$$0 \to WI_{E, \Q} \to W\MO_{Y, \Q} \to W\MO_{E, \Q} \to 0.$$
\end{rmk}

\section{Rational chain connectedness}\label{section:rcc}

In this section, we give a proof of the rational chain connectedness of 
threefolds of Fano type in characteristic $p>5$ even in the relative setting (cf.\ \cite[Theorem 1]{Shokurov}). 

\begin{thm}\label{thm:rcc_rel}
Let $k$ be a perfect field of characteristic $p>5$. 
Let $f:X \to Y$ be a projective morphism with $f_*\MO_X=\MO_Y$ 
from a klt threefold $(X, \Delta)$ such that $-(K_X+\Delta)$ is $f$-nef and $f$-big. 
Then every fiber of $f$ is rationally chain connected. 
\end{thm}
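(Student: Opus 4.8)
The plan is to reduce the statement to the two key structural inputs provided earlier: the special-fiber analysis of log Fano contractions (Proposition~\ref{MFS-special}) and the relative MMP machinery in characteristic $p>5$ (Theorem~\ref{thm:relmmp}, Theorem~\ref{mmp/a field}). By Remark~\ref{rem:perfect_to_closure} we may assume $k=\overline k$. The proof proceeds by induction on $\dim Y$, running a $(K_X+\Delta)$-MMP over $Y$; since $-(K_X+\Delta)$ is $f$-nef and $f$-big, such an MMP contracts everything and ends with a Mori fiber space $g\colon X'\to Y'$ over $Y$. Because rational chain connectedness of fibers is a birational-in-the-source property compatible with the operations of the MMP (divisorial contractions and flips change fibers only along rationally chain connected subsets, since the contracted/flipped loci are covered by rational curves and the exceptional loci have at most the expected dimension over $Y$), it suffices to prove the statement for the Mori fiber space $g\colon X'\to Y'$ together with the induced map $Y'\to Y$, and then glue.

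First I would treat the base case where $f$ itself is, up to the MMP, a Mori fiber space $g\colon X'\to Y'$ with $\dim Y'<\dim X'=3$. If $\dim Y'=0$ then $X'$ is a $\mathbb{Q}$-factorial klt Fano threefold with Picard number one; here one invokes the known rational chain connectedness of Fano varieties in positive characteristic (cf.\ \cite[Ch.~V, 2.14]{Kollar2}), or argues directly via bend-and-break that two general points are joined by a rational chain, using that $-K_{X'}$ has positive degree on covering curves. If $\dim Y'=1$ or $2$, then $Y'$ is a curve or surface of Fano type over $Y$ (its image in $Y$ is a point or a curve, and adjunction forces $-(K_{Y'}+\Delta_{Y'})$-positivity); by induction on dimension the fibers of $Y'\to Y$ are rationally chain connected, and since the general fiber of $g$ is a rationally chain connected variety of dimension $3-\dim Y'$ (a conic bundle fiber $\cong\mathbb{P}^1$, or a del Pezzo surface), one concludes by combining the two. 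The delicate point is the \emph{special} (non-general) fibers of $g$: here is exactly where Proposition~\ref{MFS-special} enters, letting us replace a neighbourhood of a given closed point by a plt model $(Y,\llcorner\Delta_Y\lrcorner)$ whose reduced fiber is a surface of Fano type, so that the fiber is dominated by a rationally chain connected surface, pulled back through the birational modification $W\to X$.

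More carefully, for a fixed closed point $y\in Y$ I would apply Proposition~\ref{MFS-special} to the contraction $X\to Y$ (after the MMP, so that $-(K_X+\Delta)$ is relatively ample), obtaining $W\to X$ and $W\to Y'\to Y$ with $W$ smooth, both vertical maps birational, and a plt pair $(Y', \Delta_{Y'})$ over $Y$ with $g^{-1}(y)_{\mathrm{red}}=\llcorner\Delta_{Y'}\lrcorner$ a surface of Fano type. Since a surface of Fano type over an algebraically closed field is rational, hence rationally chain connected, and $W\to X$ is birational with fibers of bounded dimension, the fiber $f^{-1}(y)$ is covered by the images of rational curves: the reduced fiber $\llcorner\Delta_{Y'}\lrcorner$ maps onto (a component of) $g^{-1}(y)$, whose total transform in $W$ maps onto $f^{-1}(y)$, and rational chains are preserved under taking images of proper morphisms. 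One then needs the standard fact that it suffices to chain-connect in the reduced fiber, and that the finitely many fibers being glued along a common point makes the whole fiber rationally chain connected. The main obstacle I expect is precisely the bookkeeping in this gluing/dévissage: ensuring that the rational chains produced on the various pieces (the surface-of-Fano-type reduced fiber, the fibers of $Y'\to Y$ handled by induction, and the fibers of the Mori fiber space $g$) actually link up into a connected rational chain covering $f^{-1}(y)$, and that the MMP operations over $Y$ do not destroy rational chain connectedness of fibers — this requires knowing that flipping loci and divisorial exceptional loci of a $(K_X+\Delta)$-MMP over $Y$ are, fiberwise, rationally chain connected, which follows from the cone theorem (contracted rays are spanned by rational curves of bounded anticanonical degree) but must be stated with care in the relative setting.
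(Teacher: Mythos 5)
Your sketch correctly identifies the role of Proposition~\ref{MFS-special} for special fibers and correctly reduces to the algebraically closed field via Remark~\ref{rem:perfect_to_closure}, but there is a genuine gap in the base case of your induction, and a second spot where a substantial lemma is treated as obvious.

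The crucial gap is your treatment of the case $\dim Y'=0$, i.e.\ a $\mbQ$-factorial klt Fano threefold $X'$ with Picard number one. You propose to invoke \cite[Ch.~V, 2.14]{Kollar2} or ``bend-and-break''; neither works here. Koll\'ar's theorem is for \emph{smooth} Fano varieties (that is precisely why the smooth case is recalled in the introduction of the paper and is not the content of this theorem), while bend-and-break only yields that through every point there passes a rational curve of bounded anticanonical degree, i.e.\ uniruledness. Uniruledness does not give rational chain connectedness, and the passage from one to the other is the whole difficulty: one must show the MRC quotient $S$ of $X'$ is a point. The paper does this by Proposition~\ref{thm:rcc_Fanotype}, via a contradiction argument: pick a very general $s\in S$, run a $(-D)$-MMP where $D$ is the pullback of an ample divisor on $S$, locate the first step whose exceptional locus dominates $S$, and show the contracted rational curve $C$ must lie in $f_r^{-1}(s)$ (by MRC-generality), giving $C\cdot D_r=0$ and contradicting $(-D_r)$-negativity. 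Nothing in your proposal replaces this argument, and the Picard-number-one hypothesis on $X'$ does not by itself trivialize it.

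Second, the assertion that rational chain connectedness of fibers is ``compatible with the operations of the MMP'' and that the various pieces ``glue'' is not a routine observation: it is Theorem~\ref{birational-RCC} in the paper, which itself requires reducing to the $\mbQ$-factorial case via Proposition~\ref{MFS-special}, taking a log resolution, running a $(K+E)$-MMP that ends at $X$, and checking each pl-divisorial contraction and pl-flip using Lemma~\ref{RCC-lemma} and the argument from \cite[Proposition~3.6]{6authors}. Your sketch flags the ``d\'evissage bookkeeping'' as the main obstacle but does not actually resolve it; this is the place where one must show that a preimage under a birational morphism of klt threefolds of an RCC subscheme is RCC, and that statement needs a proof along the lines above, not a cone-theorem remark.

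For the relative case $\dim Y>0$, your plan is essentially right and matches the paper's Proposition~\ref{relative-RCC}: apply Proposition~\ref{MFS-special} at a fixed $y\in Y$ to get a plt model whose reduced fiber is a rational surface of Fano type, and then pass back through the birational modification. But that last step is exactly an application of the birational-RCC theorem, so you cannot avoid proving it.
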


First we recall the definition of varieties of Fano type and the rational chain connectedness. 

\begin{defi}\label{defi:fanotype}
Let $X$ be a projective normal variety. 
We say that $X$ is \textit{of Fano type} if there exists an effective $\mathbb{Q}$-divisor 
$\Delta$ such that $-(K_X+\Delta)$ is ample and $(X, \Delta)$ is klt. 
\end{defi}

\begin{rmk}\label{rmk:nefbig}
If $\dim X \leq 3$ (cf.\ Remark~\ref{rmk:perturbation}) and there exists an effective $\mathbb{Q}$-divisor 
$\Delta$ such that $-(K_X+\Delta)$ is nef and big and $(X, \Delta)$ is klt, 
then $X$ is of Fano type (see \cite[Lemma-Definition 2.6]{PS}) thanks to the existence of log resolutions. 
\end{rmk}

\begin{rmk}\label{rmk:perturbation}
For a klt pair $(X, B)$ and an effective $\mbQ$-Cartier $\mbQ$-divisor $D$, 
there exists $\epsilon>0$ such that $(X, B+\epsilon D)$ is again klt, 
if there exists a log resolution of $(X, B+D)$. 
\end{rmk}

\begin{defi}[{\cite[IV. Definition 3.2]{Kollar2}}]
A scheme $X$ of finite type over a field $k$ is called \textit{rationally chain connected} 
if there exist a family of geometrically connected proper algebraic curves $g : U \to Y$ and a cycle morphism $u: U \to X$ 
with the following two conditions. 
\begin{itemize}
\item The geometric fibers of $g$ have only rational components. 
\item $u^{(2)}: U \times _Y U \to X \times _k X$ is dominant. 
\end{itemize}

\end{defi}
\begin{rmk}\label{rmk:rcc}
We list some basic properties on the rational chain connectedness. 
\begin{enumerate}
\item (\cite[IV. Exercise 3.2.5]{Kollar2} and \cite[Proposition A.4]{Tanaka:behavior}) 
For an arbitrary field extension $k'/k$, $X$ is rationally chain connected if and only if 
so is the base change $X \times_k k'$. 

\item (\cite[IV. Proposition 3.6]{Kollar2}) 
Suppose that $k$ is an uncountable algebraically closed field. 
Then, $X$ is rationally chain connected if and only if 
a pair of general closed points can be connected by a chain of rational curves. 
\end{enumerate}
\end{rmk}

\subsection{Birational and relative cases}
In this subsection, we prove Theorem~\ref{thm:rcc_rel} when $\dim Y > 0$. 
We start with a basic criterion for the rational chain connectedness.

\begin{lem}\label{RCC-lemma}
Let $f:X \to Y$ be a birational morphism of proper varieties over an algebraically closed field. 
Suppose that the following two conditions hold. 
\begin{enumerate}
\item{Every fiber of $f$ is rationally chain connected.}
\item{For a rational curve $C$ contained in $f(\Ex(f))$, 
its inverse image $f^{-1}(C)$ is rationally chain connected.}
\end{enumerate}
If a closed subscheme $\Gamma$ of $Y$ is rationally chain connected, 
then so is the inverse image $f^{-1}(\Gamma)$. 
\end{lem}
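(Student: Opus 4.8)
The plan is to reduce to the criterion of Remark~\ref{rmk:rcc}~(2) and then to lift a chain of rational curves from $\Gamma$ up to $f^{-1}(\Gamma)$, using the three hypotheses to control the pieces that occur. By Remark~\ref{rmk:rcc}~(1) we may assume that $k$ is an uncountable algebraically closed field, and then by Remark~\ref{rmk:rcc}~(2) it suffices to join \emph{any} two closed points of $f^{-1}(\Gamma)$ by a connected chain of rational curves lying inside $f^{-1}(\Gamma)$. So fix closed points $x_1,x_2\in f^{-1}(\Gamma)$ and put $y_i:=f(x_i)\in\Gamma$. Since $\Gamma$ is proper and rationally chain connected, I would use the standard fact that any two closed points of a proper rationally chain connected scheme over an algebraically closed field lie on a connected chain of rational curves (cf.\ \cite[Ch.~IV]{Kollar2}) to obtain integral rational curves $C_1,\dots,C_m\subset\Gamma$ and closed points $y_1=z_0,z_1,\dots,z_m=y_2$ with $z_{j-1},z_j\in C_j$ for every $j$.

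Next I would lift each $C_j$, distinguishing two cases. If $C_j\subset f(\Ex(f))$, then $f^{-1}(C_j)$ is a proper closed subscheme of $f^{-1}(\Gamma)$, it is rationally chain connected by hypothesis~(2), and it contains the whole fibres $f^{-1}(z_{j-1})$ and $f^{-1}(z_j)$. If $C_j\not\subset f(\Ex(f))$, let $\widetilde{C_j}\subset X$ be the strict transform of $C_j$; then $\widetilde{C_j}\to C_j$ is a surjective birational morphism of proper curves, so the normalization of $\widetilde{C_j}$ is $\mathbb{P}^1$, i.e.\ $\widetilde{C_j}$ is a rational curve, it lies inside $f^{-1}(C_j)\subset f^{-1}(\Gamma)$, and it meets both $f^{-1}(z_{j-1})$ and $f^{-1}(z_j)$. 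Interleaving these lifts with the fibres $f^{-1}(z_0),\dots,f^{-1}(z_m)$ --- which are proper and rationally chain connected by hypothesis~(1) --- produces a finite chain of proper rationally chain connected subschemes of $f^{-1}(\Gamma)$ in which consecutive members share a closed point and whose first and last members contain $x_1$ and $x_2$; splicing rational chains inside the successive members then yields the desired chain in $f^{-1}(\Gamma)$, and hence $f^{-1}(\Gamma)$ is rationally chain connected.

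The step I expect to be the main obstacle is this final splicing: one has to connect \emph{prescribed} closed points --- for instance two points of a positive-dimensional fibre $f^{-1}(z_j)$ that are hit respectively by the lifts of $C_j$ and of $C_{j+1}$ --- rather than general ones, which is exactly why one needs the foundational statement that on a proper rationally chain connected scheme over an algebraically closed field \emph{all} closed points are joined by connected chains of rational curves, not merely a general pair. Granting that input, the remaining claim --- that a finite connected union of proper rationally chain connected subschemes whose consecutive members meet is again rationally chain connected --- is routine bookkeeping. One may also note that when $\Gamma\not\subset f(\Ex(f))$ the fibres of $f$ over the general points $z_j$ of $\Gamma$ are singletons, so that no splicing inside a positive-dimensional fibre is needed in that case and the genuinely new content of the lemma is concentrated in the case $\Gamma\subset f(\Ex(f))$, where hypothesis~(2) is precisely what is required.
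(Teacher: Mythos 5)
Your proof is correct and follows essentially the same strategy as the paper's: reduce to an uncountable algebraically closed base field, lift a chain of rational curves from $\Gamma$, handle the components contained in $f(\Ex(f))$ with hypothesis~(2), take strict transforms of the remaining components, and splice through the fibers, which are rationally chain connected by hypothesis~(1). The only differences are presentational: the paper first reduces to the case where $\Gamma$ is a single rational curve while you keep the full chain and interleave, and you are more explicit than the paper about needing to connect \emph{prescribed} (not merely general) closed points in each proper rationally chain connected piece --- a genuine but standard strengthening of Remark~\ref{rmk:rcc}~(2) that the paper's proof uses tacitly.
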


\begin{proof}
We can assume that $k$ is uncountable (Remark \ref{rmk:rcc} (1)). 
First, we reduce the problem to the case where $\Gamma$ is a rational curve. 
Let $x_1, x_2 \in f^{-1}(\Gamma)$. 
If $f(x_1)=f(x_2)$, then $x_1$ and $x_2$ can be connected 
by a chain of rational curves in $f^{-1}(\Gamma)$ by the condition (1). 
Suppose $f(x_1) \neq f(x_2)$. 
By the rationally chain connectedness of $\Gamma$, 
there exists a connected chain $\Gamma ' \subset \Gamma$ of rational curves which contains $f(x_1)$ and $f(x_2)$. 
Since the existence of a connected chain of rational curves which contains $x_1$ and $x_2$ follows from the assertion for $\Gamma'$, 
we may assume that $\Gamma$ is a connected chain of rational curves from the beginning. 
Moreover, by replacing $\Gamma$ with its irreducible component, 
we may assume that $\Gamma$ is a rational curve. 

Suppose that $\Gamma$ is a rational curve. 
If $\Gamma \subset f(\Ex(f))$, then the assertion follows from the condition (2). 
Suppose that $\Gamma \not\subset f(\Ex(f))$.
Set $\Gamma_X$ to be the proper transform of $\Gamma$ on $X$. 
Let $\Gamma \cap f(\Ex(f))=\{y_1, \cdots, y_r\}$. 
Since $f^{-1}(\Gamma)=\Gamma_X \cup \bigcup_{1\leq i\leq r} f^{-1}(y_i)$, 
it is rationally chain connected by the condition (1). 
\end{proof}

We show the birational case (cf.\ \cite[Corollary 3]{Shokurov}). 

\begin{thm}\label{birational-RCC}
Let $f:X \to Y$ be a proper birational morphism of normal threefolds 
over a perfect field of characteristic $p>5$. 
Suppose that $(Y, \Delta_Y)$ is klt for some effective $\mbQ$-divisor $\Delta_Y$. 
If a closed subscheme $\Gamma$ of $Y$ is rationally chain connected, 
then so is its inverse image $f^{-1}(\Gamma)$. 
\end{thm}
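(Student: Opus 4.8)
The strategy is to reduce to the situation handled by Lemma~\ref{RCC-lemma} by running a relative MMP and extracting enough divisors so that the exceptional locus becomes well-controlled. First I would replace $Y$ by a $\mathbb{Q}$-factorialization and $f$ by its composition with a log resolution, so that we may assume $X$ is smooth (or at least $\mathbb{Q}$-factorial klt) and $f$ is a projective birational morphism between $\mathbb{Q}$-factorial klt threefolds; since a chain of rational curves upstairs maps to one downstairs and vice versa, replacing $X$ by a higher model does not affect the conclusion. By Remark~\ref{rmk:rcc}~(1) we may also assume $k$ is algebraically closed, indeed uncountable.

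Next I would verify the two hypotheses of Lemma~\ref{RCC-lemma} for $f$. For condition (1), every fiber $f^{-1}(y)$ of the birational contraction is connected (as $f_*\mathcal{O}_X = \mathcal{O}_Y$) and at most two-dimensional; the key point is that each fiber is rationally chain connected. When the fiber is a divisorial exceptional surface, one applies Proposition~\ref{MFS-special} to see that (a suitable model of) that fiber is a surface of Fano type, hence rationally chain connected by the two-dimensional case of the Fano-type statement (or directly since Fano-type surfaces are rational). When the fiber is a curve, its components are rational for the same reason — this is essentially the content needed and follows by the same contraction analysis. For condition (2), given a rational curve $C \subset f(\mathrm{Ex}(f))$, the preimage $f^{-1}(C)$ is swept out by $C$ together with the fibers over points of $C$; since $C$ itself is rational and each fiber is rationally chain connected by condition (1), the preimage is rationally chain connected by gluing, exactly as in the final paragraph of the proof of Lemma~\ref{RCC-lemma}. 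Once both conditions hold, Lemma~\ref{RCC-lemma} immediately gives that $f^{-1}(\Gamma)$ is rationally chain connected whenever $\Gamma$ is.

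The main obstacle is establishing condition (2) — more precisely, showing that the fibers of $f$ over an arbitrary (not necessarily closed, not necessarily general) point are rationally chain connected, including the one-dimensional fibers and the surface fibers over points lying on a curve in the center. The surface case is where Proposition~\ref{MFS-special} does the work: after the commutative diagram it provides, the relevant surface $\llcorner \Delta_Y \lrcorner$ is of Fano type, hence rationally chain connected; one then transports this back to the original fiber using that the birational modifications involved do not change rational chain connectedness of the fibers. I would handle the curve fibers by a similar but easier argument: a one-dimensional fiber of a $(K_X+\Delta)$-negative birational contraction between klt threefolds consists of rational curves, which can be seen by restricting to a general hyperplane section through the center or by bend-and-break applied to the relatively anti-ample $-(K_X+\Delta)$. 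Assembling these, and being careful that all reductions (base change to $\bar k$, passing to a higher model, $\mathbb{Q}$-factorialization) are compatible with rational chain connectedness via Remark~\ref{rmk:rcc}, completes the proof.
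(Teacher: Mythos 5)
There is a genuine gap in the middle of your argument: you reduce to a log resolution $f\colon X\to Y$ of a $\mathbb Q$-factorial klt threefold and then try to verify the two hypotheses of Lemma~\ref{RCC-lemma} directly for this $f$. But condition~(1) of Lemma~\ref{RCC-lemma} --- that \emph{every} fiber of $f$ is rationally chain connected --- is, for $\Gamma$ a closed point, exactly the statement of Theorem~\ref{birational-RCC} itself, so the argument is circular as written. Proposition~\ref{MFS-special} does not rescue this: applied with $X=Z=Y$ it gives you a \emph{different} birational model $g\colon Y'\to Y$ over which the fiber over a chosen point is of Fano type; it says nothing about the fibers of an arbitrary log resolution. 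To transport RCC from $g^{-1}(z)$ to $f^{-1}(z)$ you would pass through a common higher model $W$, and then you would need the fiber of $W\to Y'$ over $z$ to be RCC --- again the theorem being proved. Relatedly, the bend-and-break claim for one-dimensional fibers does not apply here, because a log resolution of a klt pair is \emph{not} a $(K_X+\Delta)$-negative contraction (discrepancies can be positive), so $-(K_X+\Delta)$ is not relatively ample for the resolution map.

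Condition~(2) also cannot be verified the way you propose. You write that $f^{-1}(C)$ ``is swept out by $C$ together with the fibers over points of $C$'', but when $C\subset f(\Ex(f))$ the curve $C$ is entirely contained in the non-isomorphism locus, so there is no strict transform of $C$ (no section of $f^{-1}(C)\to C$) inside $X$ to glue the individual fibers $f^{-1}(y)$ into a single connected chain. If a surface $E$ is contracted onto $C$, then $f^{-1}(C)=E$, and one must argue directly that $E$ is RCC using some extra structure on $E$. The paper's remedy for both gaps is the step you skipped: after passing to a log resolution, run a $(K_X+E)$-MMP over $Y$ (with $E$ the reduced exceptional divisor), which terminates at $Y$ (as in the proof of Theorem~\ref{main-normal}) and factors $f$ into pl-divisorial contractions and pl-flips. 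Each step carries a dlt boundary with a relative anti-ampleness condition, and that is what makes the hypotheses of Lemma~\ref{RCC-lemma} checkable at each step (this verification is the argument from \cite[Proposition~3.6]{6authors}): the exceptional divisor restricts, by adjunction, to a Fano-type surface over its image, giving both rational fibers and a rational section to glue along. Without the MMP factorization, the needed structure is simply not present on a raw log resolution.
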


\begin{proof}
We may assume that the base field is an uncountable algebraically closed field. 

First, we shall reduce to the case where $Y$ is $\mbQ$-factorial. 
Let $y_1, \cdots, y_r$ be the non-$\Q$-factorial points on $Y$. 
By Proposition \ref{MFS-special}, we have a birational modification $g_Y:Y' \to Y$ 
such that $Y'$ is $\mathbb{Q}$-factorial and klt, $\Ex(g_Y)=\coprod_{1\leq i \leq r}g _Y ^{-1}(y_i)$, 
and each $T_i=g_Y^{-1}(y_i)_{\mathrm{red}}$ is a surface of Fano type. 
In particular, $T_i$ is rational, hence is rationally connected. 
Therefore, $g_Y^{-1}(\Gamma)$ is rationally chain connected. 
Further, there exist a normal threefold $X'$ and birational morphisms 
$f': X' \to Y'$ and $X' \to X$ such that the following diagram commutes: 
\[\xymatrix{
X' \ar[d]^{g_X} \ar[r]^{f'} & Y' \ar[d]^{g_Y} \\
X \ar[r]^{f}& Y
}\]
If we could prove that $f'^{-1}(g_Y^{-1}(\Gamma))$ is rationally chain connected, 
then so is its image $f^{-1}(\Gamma) = g_X \big( f'^{-1}(g_Y^{-1}(\Gamma)) \big)$ under $g_X$. 
Therefore, it is sufficient to prove the assertion for $f'$. 
Hence, by replacing $Y$ with $Y'$, we may assume that $Y$ is $\mbQ$-factorial. 

Suppose $Y$ is $\mbQ$-factorial. We may assume $\Delta _Y = 0$. 
Further, we may replace $f$ with a log resolution of $Y$. 
We run a $(K_X+E)$-MMP over $Y$, where $E$ is the reduced divisor with $\Supp E=\Ex(f)$. 
This MMP terminates and ends with $Y$ itself (see the proof of Theorem~\ref{main-normal}). 
Each step of this MMP is a pl-divisorial contraction or a pl-flip. 
Thus, it suffices to show the assertion under the assumption that $f:X \to Y$ is 
either a pl-divisorial contraction or pl-flipping contraction 
with respect to a dlt pair $(X, \Delta)$. 
The assertion follows from Lemma~\ref{RCC-lemma} and the argument in 
the proof in \cite[Proposition~3.6]{6authors}. 
\end{proof}

Using the previous proposition, we prove Theorem \ref{thm:rcc_rel} for $\dim Y >0$.

\begin{prop}\label{relative-RCC}
Let $f:X \to Y$ be a projective morphism of normal varieties 
over a perfect field of characteristic $p>5$ with $f_*\MO_X=\MO_Y$. 
Suppose that $\dim X \leq 3$ and that 
there exists an effective $\Q$-divisor $\Delta$ on $X$ such that $(X, \Delta)$ is klt 
and $-(K_X+\Delta)$ is $f$-nef and $f$-big. 
If $\dim Y>0$, then every fiber of $f$ is rationally chain connected. 
\end{prop}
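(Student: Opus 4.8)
The plan is to reduce to the case where the base $Y$ is a curve or a surface and the fiber we care about sits over a closed point, then to apply the structural result of Proposition \ref{MFS-special} which produces a model $g : Y \to Z$ (in the notation of that proposition, with $Z$ playing the role of our base) whose reduced fiber over the chosen point is a surface of Fano type, hence rationally connected. First I would pass to an uncountable algebraically closed field by Remark \ref{rmk:rcc}(1). Fix a closed point $z \in Y$; we must show $f^{-1}(z)$ is rationally chain connected. Running a relative MMP or invoking Proposition \ref{MFS-special} directly, we obtain a commutative square
\[
\begin{CD}
W @>\psi>> X'\\
@VVV @VV{f'}V\\
X @>f>> Y
\end{CD}
\]
together with an effective $\Q$-divisor $\Delta_{X'}$ such that $(X', \Delta_{X'})$ is $\Q$-factorial plt, $-(K_{X'}+\Delta_{X'})$ is $f'$-ample, and $f'^{-1}(z)_{\red} = \llcorner \Delta_{X'}\lrcorner =: S$ is a surface of Fano type. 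Since $f'^{-1}(z)$ is everywhere of dimension $\le 2$ and its reduction $S$ is rationally connected, $f'^{-1}(z)$ is rationally chain connected. It then remains to transport this back to $f^{-1}(z)$.

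The transport step is where I would use Theorem \ref{birational-RCC}: the morphism $W \to X$ is proper birational between normal threefolds and $X$ is klt, so the inverse image in $W$ of any rationally chain connected closed subscheme of $X$ is again rationally chain connected; conversely, a proper birational image of a rationally chain connected scheme is rationally chain connected (this is elementary, since rational curves map to rational curves or points). Thus it suffices to show that the preimage of $f^{-1}(z)$ in $W$ is rationally chain connected. That preimage maps onto $f'^{-1}(z)$ (which is rationally chain connected as above), and the fibers of $\psi : W \to X'$ over points of $f'^{-1}(z)$ are again rationally chain connected because $\psi$ is a birational morphism from a smooth (hence in particular klt) threefold to the klt threefold $X'$, so Theorem \ref{birational-RCC} applies to $\psi$ as well. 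Combining these via the transitivity packaged in Lemma \ref{RCC-lemma} (every fiber rationally chain connected, and preimages of rational curves in the exceptional locus rationally chain connected) yields that the preimage of $f^{-1}(z)$ in $W$ is rationally chain connected, and pushing forward down $W \to X$ gives the claim.

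The main obstacle I anticipate is the bookkeeping needed when $X$ itself is not $\Q$-factorial, so that Proposition \ref{MFS-special}(4) gives a $g$ which is an isomorphism only away from $z$ and finitely many non-$\Q$-factorial points: one must check that the exceptional surfaces introduced over those extra points are themselves of Fano type (again by Proposition \ref{MFS-special}) and hence rational, so that adding them does not break rational chain connectedness — this is exactly the $\Q$-factorialization reduction already carried out in the proof of Theorem \ref{birational-RCC}, and I would quote that argument rather than repeat it. A secondary point to be careful about is that Proposition \ref{MFS-special} requires $0 < \dim Z \le 3$, which is guaranteed here by the hypothesis $\dim Y > 0$ together with $\dim X \le 3$; and that the case $X = Y$ (so $f$ birational) is covered directly by Theorem \ref{birational-RCC} applied to the point $\Gamma = \{z\}$, which is trivially rationally chain connected.
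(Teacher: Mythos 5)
Your argument is correct and follows essentially the same route as the paper: pass to an uncountable algebraically closed field, fix a closed point $z\in Y$, invoke Proposition \ref{MFS-special} to produce the diagram with $W$, $X'$ and $f'$ so that $f'^{-1}(z)_{\red}$ is a surface of Fano type (hence rationally connected), apply Theorem \ref{birational-RCC} to the birational morphism $W\to X'$ to conclude $\varphi'^{-1}(f'^{-1}(z))$ is rationally chain connected, and push down along $W\to X$. The extra remarks about Lemma \ref{RCC-lemma} and the $\Q$-factorialization bookkeeping are not needed once Theorem \ref{birational-RCC} is applied, but they do not introduce any error.
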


\begin{proof}
We may assume that the base field is an uncountable algebraically closed field. 
Fix a closed point $z \in Y$. 
We shall show that $f^{-1}(z)$ is rationally chain connected.  
By Proposition~\ref{MFS-special}, there exists a commutative diagram 
$$\begin{CD}
W @>\varphi'>> X'\\
@VV\varphi V @VVf'V\\
X @>f>> Y,
\end{CD}$$
such that $\varphi$ and $\varphi'$ are projective birational morphisms, and 
$f'^{-1}(z)$ is rationally connected. 
By Theorem~\ref{birational-RCC}, $\varphi'^{-1}(f'^{-1}(z))$ is rationally chain connected, 
hence so is its image $f^{-1}(z) = \varphi \big( \varphi'^{-1}(f'^{-1}(z)) \big)$ under $\varphi$. 
\end{proof}

\begin{rmk}
If $f:X \to Y$ is a Mori fiber space, then 
it is known that a general fiber of $f$ is rationally chain connected by \cite[Theorem 0.4]{Tanaka:behavior} 
in arbitrary positive characteristic. 
\end{rmk}

\subsection{Global case}
In this subsection, we show the rational chain connectedness of log Fano threefolds. 

\begin{prop}\label{thm:rcc_Fanotype} 
Let $X$ be a projective geometrically connected threefold 
of Fano type over a perfect field of characteristic $p>5$. 
Then $X$ is rationally chain connected. 
\end{prop}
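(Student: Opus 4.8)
The strategy, following \cite{6authors}, is to run a minimal model program down to a Mori fibre space and then to understand its base.

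\textbf{Reduction to a Mori fibre space.} First I would replace $X$ by a $\mathbb{Q}$-factorialization $g\colon X'\to X$: since $g$ is small, $(X',g_*^{-1}\Delta)$ is klt with $-g^*(K_X+\Delta)$ nef and big, so $X'$ is still of Fano type (Remark~\ref{rmk:nefbig}), and because $X=g(X')$ and the image of a rationally chain connected scheme is rationally chain connected, it is enough to treat $X'$. So assume $X$ is $\mathbb{Q}$-factorial and, after perturbing $\Delta$, that $-(K_X+\Delta)$ is ample. Then $K_X+\Delta$ is not pseudo-effective, so by Theorem~\ref{mmp/a field} a $(K_X+\Delta)$-MMP with scaling of a suitable ample divisor terminates with a Mori fibre space $\pi\colon X_N\to T$, where $0\le\dim T\le 2$. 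Every step of this MMP is a divisorial contraction or a flip between $\mathbb{Q}$-factorial klt threefolds; by Theorem~\ref{birational-RCC}, applied to a common resolution with $\Gamma$ the whole source, together with the same image observation, rational chain connectedness is preserved in both directions along each step. Hence it suffices to prove that $X_N$ is rationally chain connected.

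\textbf{Positive-dimensional base.} If $\dim T>0$, Proposition~\ref{relative-RCC} applies to $\pi$, so every fibre of $\pi$ is rationally chain connected; moreover the general fibre is a variety of Fano type of dimension $\le 2$, hence rational (for a surface of Fano type by the surface MMP; for a curve it is $\mathbb{P}^1$), in particular separably rationally connected. Granting that $T$ is rationally chain connected, one concludes that $X_N$ is: join two general points of $X_N$ through a chain of rational curves downstairs in $T$ and lift it one rational curve $R$ at a time, using that $\pi^{-1}(R)\to R\cong\mathbb{P}^1$ has separably rationally connected general fibre and hence a section (Graber--Harris--Starr, valid in characteristic $p$), the section together with the rational chain connectedness of all fibres of $\pi$ carrying the chain across $R$. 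So the content in this case is that the base $T$, a normal projective variety of dimension $1$ or $2$, is rationally chain connected. I would prove this by showing that $T$ is itself of Fano type, via a canonical bundle formula / sub-adjunction for $\pi$ (for $\dim T=1$ one only meets fibrations over a curve, and for $\dim T=2$ the morphism $\pi$ is a conic bundle), and then invoke the lower-dimensional cases, where a variety of Fano type of dimension $1$ is $\mathbb{P}^1$ and one of dimension $2$ is rational.

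\textbf{Zero-dimensional base.} If $\dim T=0$, then $X_N$ is a $\mathbb{Q}$-factorial klt log Fano threefold with $\rho(X_N)=1$ and the MMP gives no further reduction. Here I would apply Proposition~\ref{MFS-special} with $Z=X=X_N$ and a general closed point $z$: it produces a projective birational $g\colon Y\to X_N$ which is an isomorphism away from $z$, with $(Y,\Delta_Y)$ $\mathbb{Q}$-factorial plt, $\rho(Y)=2$, $-(K_Y+\Delta_Y)$ $g$-ample, and $E:=\lfloor\Delta_Y\rfloor=g^{-1}(z)_{\mathrm{red}}$ a surface of Fano type, hence rational, hence rationally chain connected. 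Since $X_N=g(Y)$ it suffices to show $Y$ is rationally chain connected. As $\rho(Y/X_N)=1$, one can choose an effective boundary $\Gamma$ on $Y$ (a perturbation of $\Delta_Y$) with $(Y,\Gamma)$ klt, $K_Y+\Gamma$ not pseudo-effective, and $K_Y+\Gamma$ trivial on the $g$-contracted extremal ray; a $(K_Y+\Gamma)$-MMP then cannot contract $E$ and terminates with a Mori fibre space over a \emph{positive-dimensional} base (here the generality of $z$, hence of $E$, is used to guarantee that $E$ survives and the base is positive-dimensional), which throws us back to the previous case.

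The main obstacle is the structure of the base of the Mori fibre space in positive characteristic: establishing that the base of a Fano-type Mori fibre space is again of Fano type, which is a substitute for Kawamata's sub-adjunction and the canonical bundle formula that is genuinely delicate in characteristic $p$ and is exactly where an input on the rationality (and rational chain connectedness) of conic bundles is needed, together with reducing the Picard-number-one case back to it via the divisorial extraction of Proposition~\ref{MFS-special}. Once these are in place, the rest is bookkeeping with the relative and birational statements already proved.
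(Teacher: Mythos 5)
Your approach diverges substantially from the paper's, and while the reduction steps at the front are sound (passing to a small $\mathbb{Q}$-factorialization, using Theorem~\ref{birational-RCC} to move rational chain connectedness back and forth across MMP steps), the strategy of ending the MMP at a Mori fibre space and then studying its base runs into exactly the obstruction you name: in characteristic $p$ there is no canonical bundle formula available to conclude that the base $T$ of a Fano-type Mori fibre space is again of Fano type, and you do not supply a workaround. Your Picard-number-one fallback is also not justified: after extracting $E$ over a general point $z$ via Proposition~\ref{MFS-special} and running a $(K_Y+\Gamma)$-MMP that is trivial on the $g$-contracted ray, nothing forces the end product to be a Mori fibre space over a positive-dimensional base --- the MMP could perfectly well contract a divisor and land back in a Picard-number-one Fano threefold, so the ``generality of $z$'' claim is an unsupported assertion, and even if it did produce a positive-dimensional base you would be sent back to the case that has the canonical bundle formula gap. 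The Graber--Harris--Starr lifting step is also more delicate than stated, since the fibres are singular surfaces of Fano type and one must pass through a resolution; this can be patched but is not the real issue.

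The paper sidesteps all of this with a different idea. Rather than analyzing the base of a $(K_X+\Delta)$-MMP, it takes the maximally rationally chain connected fibration $f\colon X\dashrightarrow S$ (which satisfies $\dim S<3$ by \cite[Proposition 3.4(1)]{6authors}), pulls back an ample divisor $H$ from $S$ to obtain $D$, and observes that since $X$ is of Fano type one may run a $(-D)$-MMP (every such MMP is a klt MMP after writing $-\tfrac{1}{m}D\sim_{\mathbb{Q}}K_X+\Delta+A$). Letting $r$ be the first step whose exceptional locus dominates $S$, and supposing $\dim S>0$, a very general fibre $f_r^{-1}(s)$ has the property that any rational curve meeting it lies inside it; using Proposition~\ref{relative-RCC} (or \cite[Proposition 3.4(2)]{6authors} if $\dim W_r=0$) one finds a $\psi_r$-contracted rational curve $C$ meeting $f_r^{-1}(s)$, hence $C\subset f_r^{-1}(s)$, hence $C\cdot D_r=0$, contradicting $(-D_r)$-negativity. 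So $\dim S=0$ and $X$ is rationally chain connected. The point is that the MRCC fibration and the ``steering divisor'' $D$ let one derive a contradiction without ever having to understand the geometry of a Mori-fibre-space base, which is precisely what your argument needs and cannot supply. You correctly diagnosed the obstruction; the missing idea is to replace ``prove the base is Fano type'' by ``show the MRCC base must be a point.''
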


\begin{proof}
We may assume that the base field is an uncountable algebraically closed field. 
Taking a small $\mbQ$-factorialization (\cite[Theorem 1.6]{Birkar}) if necessary, 
we may assume that $X$ is $\mbQ$-factorial (cf.\ Remark \ref{rmk:nefbig}). 
Let $\Delta$ be an effective $\mathbb{Q}$-divisor 
$\Delta$ such that $-(K_X+\Delta)$ is ample and $(X, \Delta)$ is klt. 
Then for any divisor $B$, we can run a $B$-MMP. 
Indeed, we can take an ample $\mathbb{Q}$-divisor $A$ and positive integer $m$ 
such that $\frac{1}{m}B \sim_{\mathbb{Q}} K_X+\Delta+A$ and $(X,\Delta+A)$ is klt. 
Then a $(K_X + \Delta + A)$-MMP with scaling of some ample divisor runs and terminates by \cite[Theorem 1.6]{BW}. 
This MMP is also a $B$-MMP. 

Let $f: X \dashrightarrow S$ be a maximally rationally chain connected fibration as in \cite[IV. Definition 5.1]{Kollar2}, 
where $S$ is a projective normal variety. 
This is almost holomorphic, i.e.\ 
we can find non-empty open subsets $X' \subset X$ and $S' \subset S$ 
such that $f$ induces a proper morphism $X' \to S'$. Note that $\mathrm{dim}\,S<  \mathrm{dim}\,X$ by \cite[Proposition 3.4 (1)]{6authors}.
Let $H$ be an ample Cartier divisor on $S$ and 
let $D$ be the closure of $f^{*}H|_{X'}$. 
We consider $(-D)$-MMP:
\[
X=:X_0 \overset{\varphi_0}{\dashrightarrow}  X_1 \overset{\varphi_1}{\dashrightarrow} 
\cdots \overset{\varphi_{r-1}}{\dashrightarrow} X_r \dashrightarrow \cdots 
\]
Note that this MMP ends with a Mori fiber space. 
Let $D_i$ be the strict transform of $D$ on $X_i$. 
Let $r$ be the least $r$ satisfying the following condition: 
\begin{itemize} 
\item for the $(-D_r)$-negative extremal contraction $\psi _r : X_r \to W_{r}$ 
yielding $\varphi _r$, 
its exceptional locus $\Ex (\psi _r)$ is intersecting with a general fiber of $f_r:X_r \dashrightarrow S$, 
i.e.\ $\Ex (\psi _r)$ dominates $S$. 
\end{itemize}
Note that the induced rational map $f_i: X_i \dashrightarrow S$ is 
also a maximally rationally chain connected fibration for any 
$0\leq i\leq r$ and when $\psi _i$ is a Mori fiber space, we set $\Ex (\psi _i) :=X_i$ 
(hence such $r$ always exists). 

Suppose that $\dim S >0$ by contradiction. 
Fix very general $s \in S$. Then $s$ satisfies the following property 
(see \cite[IV. (5.2.1)]{Kollar2}): 
if $f_r^{-1}(s)$ intersects with a rational curve $C \subset X_r$ then $C \subset f_r^{-1}(s)$. 
By Proposition~\ref{relative-RCC} for $\mathrm{dim}\,W_r >0$ 
and \cite[Proposition 3.4 (2)]{6authors} for $\mathrm{dim}\,W_r =0$, 
some $\psi_r$-contracted rational curve $C$ intersects with $f_r ^{-1}(s)$, and
hence $C \subset f_r ^{-1}(s)$. 
Since $D_r$ is the pull-back of $H$ by $f_r$ around $C$, it follows that $C \cdot D_r=0$. 
However, this contradicts the $(-D_r)$-negativity of $\psi _r$. 
\end{proof}

\begin{rmk}
By the same proof as above, 
Proposition~\ref{thm:rcc_Fanotype} can be generalized to arbitrary dimension 
if we assume the existence of log resolution and the minimal model program. 
\end{rmk}

\begin{proof}[Proof of Theorem~\ref{thm:rcc_rel}]
We may assume that $-(K_X+\Delta)$ is $f$-ample. 
When $\dim Y > 0$, the assertion follows from Proposition~\ref{relative-RCC}. 
When $\dim Y = 0$, the assertion follows from Proposition~\ref{thm:rcc_Fanotype}. 
\end{proof}

\section{Rational points on log Fano contractions over finite fields}\label{section-rational-pt}
In this section, we discuss rational points on a (mainly Fano) variety over a finite field. 

First, we treat the birational case as follows:

\begin{thm}\label{rat-pt-birational}
Let $f:X \to Y$ be a birational morphism of normal threefolds over $\mathbb F_q$ of characteristics $p>5$. 
If there exist effective $\mbQ$-divisors $\Delta_X$ on $X$ and $\Delta_Y$ on $Y$ 
such that $(X, \Delta_X)$ and $(Y, \Delta_Y)$ are klt. 
Then 
\[
\# X(\mathbb F_q) \equiv \# Y(\mathbb F_q) \pmod q.
\]
\end{thm}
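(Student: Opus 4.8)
The plan is to compare the Witt vector cohomologies of $X$ and $Y$ through a common resolution and then invoke the Lefschetz trace formula for Witt vector cohomology. Recall from \cite[Theorem~1.1]{BBE} together with the Lefschetz trace formula that, for any proper scheme $Z$ over $\mbF_q$, the residue of $\#Z(\mbF_q)$ modulo $q$ is computed by the Frobenius action on Witt vector cohomology:
\[
\#Z(\mbF_q)\ \equiv\ \sum_{i\geq 0}(-1)^i\operatorname{Tr}\!\left(F\mid H^i(Z, W\mcO_{Z,K})\right)\pmod q,
\]
where $F$ is the $q$-power Frobenius and $W\mcO_{Z,K}=W\mcO_Z\otimes_{\mbZ}\mbQ$. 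Hence it is enough to produce $F$-equivariant isomorphisms $H^i(X, W\mcO_{X,K})\cong H^i(Y, W\mcO_{Y,K})$ for all $i$.

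First I would fix a resolution $\pi\colon V\to X$ from a smooth threefold (which exists by \cite{CP}); then $\pi':=f\circ\pi\colon V\to Y$ is again a proper birational morphism from a smooth threefold, hence a resolution of $Y$. Since $(X,\Delta_X)$ and $(Y,\Delta_Y)$ are klt threefolds over a perfect field of characteristic $p>5$, Theorem~\ref{thm:WO} shows that $X$ and $Y$ have $W\mcO$-rational singularities; by Remark~\ref{rmk:any} the defining vanishing holds for \emph{any} resolution, so $R^i\pi_*(W\mcO_{V,\mbQ})=0$ and $R^i\pi'_*(W\mcO_{V,\mbQ})=0$ for $i>0$. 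Combined with $\pi_*W\mcO_V=W\mcO_X$ and $\pi'_*W\mcO_V=W\mcO_Y$ (which hold because $\pi_*\mcO_V=\mcO_X$ and $\pi'_*\mcO_V=\mcO_Y$, as $X$ and $Y$ are normal) and Remark~\ref{cr-remark}(2), this gives $R\pi_*(W\mcO_{V,\mbQ})\cong W\mcO_{X,\mbQ}$ and $R\pi'_*(W\mcO_{V,\mbQ})\cong W\mcO_{Y,\mbQ}$ in the derived category of $\mathcal{A}(-)_{\mbQ}$.

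Applying $Rf_*$ to the first isomorphism and using $R\pi'_*=Rf_*\circ R\pi_*$, I obtain
\[
Rf_*(W\mcO_{X,\mbQ})\ \cong\ Rf_*R\pi_*(W\mcO_{V,\mbQ})\ =\ R\pi'_*(W\mcO_{V,\mbQ})\ \cong\ W\mcO_{Y,\mbQ},
\]
so $R^if_*(W\mcO_{X,\mbQ})=0$ for $i>0$ and $f_*(W\mcO_{X,\mbQ})=W\mcO_{Y,\mbQ}$. Taking cohomology (the Leray spectral sequence degenerating), the natural pullback map induces an isomorphism $H^i(Y, W\mcO_{Y,\mbQ})\xrightarrow{\ \sim\ }H^i(X, W\mcO_{X,\mbQ})$ for every $i$, and it is $F$-equivariant because $f$ is defined over $\mbF_q$. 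To pass to $W\mcO_{-,K}$, note that $H^i(X,W\mcO_X)$ and $H^i(Y,W\mcO_Y)$ are finitely generated over the ring $W_{\sigma}[[V]]$ of Remark~\ref{cr-remark}(5); a map of such modules whose image in $\mathcal{A}(\Spec\mbF_q)_{\mbQ}$ is an isomorphism has bounded-torsion kernel and cokernel, hence becomes an isomorphism after $-\otimes_{\mbZ}\mbQ$. Feeding $H^i(X,W\mcO_{X,K})\cong H^i(Y,W\mcO_{Y,K})$ into the displayed trace formula yields $\#X(\mbF_q)\equiv\#Y(\mbF_q)\pmod q$.

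The bulk of the argument is formal, resting on Theorem~\ref{thm:WO} and the functoriality of the Leray and Grothendieck spectral sequences; the one point requiring genuine care is the last step — reconciling the quotient category $\mathcal{A}(-)_{\mbQ}$ in which $W\mcO_{-,\mbQ}$ lives with the honest cohomology groups $H^i(-,W\mcO_{-,K})$ of \cite{BBE}, and tracking Frobenius-compatibility throughout — which is precisely the bookkeeping handled in Remark~\ref{cr-remark}.
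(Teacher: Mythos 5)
Your proposal is correct and takes essentially the same route as the paper: pass to a smooth resolution $V$, invoke Theorem~\ref{thm:WO} and Remark~\ref{rmk:any} to identify $H^i(X,W\mcO_{X,K})$ and $H^i(Y,W\mcO_{Y,K})$ with $H^i(V,W\mcO_{V,K})$, and conclude by the Lefschetz-type congruence of \cite{BBE}. The only difference is presentational: the paper first replaces $X$ by a resolution and then cites \cite[Proposition~6.9]{BBE} as a black box, while you unfold what that proposition delivers (the degeneration of Leray in the $\mbQ$-localized category, the comparison with $H^i(-,W\mcO_{-,K})$ via Remark~\ref{cr-remark}, and the trace formula).
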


\begin{proof}
Taking a  resolution, we may assume that $X$ is smooth and that $f$ is a morphism. 
Since $Y$ has $W\MO$-rational singularities by Theorem \ref{thm:WO}, 
the assertion holds by Remark \ref{rmk:any} and \cite[Proposition 6.9]{BBE}. 
\end{proof}

We also have a global statement as follows. 

\begin{cor}
Let $X \dasharrow Y$ be a birational map of proper klt threefolds over $\mathbb F_q$ of characteristics $p>5$. 
Then $\# X(\mathbb F_q) \equiv \# Y(\mathbb F_q) \pmod q$. 
\end{cor}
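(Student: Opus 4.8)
The plan is to reduce to Theorem~\ref{rat-pt-birational} by factoring the birational map $X \dasharrow Y$ through a common resolution. Since both point counts matter only modulo $q$, it suffices to produce a single smooth proper threefold over $\mbF_q$ admitting birational morphisms onto both $X$ and $Y$, and then to apply Theorem~\ref{rat-pt-birational} twice.

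First I would take $\Gamma \subset X \times_{\mbF_q} Y$ to be the closure (with reduced structure) of the graph of $X \dasharrow Y$; this is a proper threefold over $\mbF_q$ whose two projections $\Gamma \to X$ and $\Gamma \to Y$ are proper birational morphisms. As $\mbF_q$ is a perfect field and $\dim \Gamma = 3$, there exists a resolution of singularities $W \to \Gamma$ (this is the same input used in the proof of Theorem~\ref{rat-pt-birational}), so $W$ is a smooth proper threefold over $\mbF_q$ and we obtain birational morphisms $p\colon W \to X$ and $q\colon W \to Y$.

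Since $W$ is smooth, $(W,0)$ is klt, while $X$ and $Y$ are klt by hypothesis; hence Theorem~\ref{rat-pt-birational} applies both to $p$ and to $q$, giving
\[
\#W(\mbF_q) \equiv \#X(\mbF_q) \pmod q \qquad\text{and}\qquad \#W(\mbF_q) \equiv \#Y(\mbF_q) \pmod q,
\]
and combining these two congruences yields $\#X(\mbF_q)\equiv \#Y(\mbF_q) \pmod q$. There is no serious obstacle here: the only point to be careful about is that Theorem~\ref{rat-pt-birational} is stated for birational \emph{morphisms}, which is precisely why one passes to the roof $W$ instead of arguing directly with the birational map; the existence of the smooth proper model $W$ is guaranteed by resolution of singularities for threefolds over the perfect finite field $\mbF_q$. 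Alternatively, one may bypass the explicit graph and simply invoke any common resolution of $X$ and $Y$.
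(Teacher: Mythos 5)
Your argument is correct and is essentially the paper's own proof: the paper also passes to a common resolution and applies Theorem~\ref{rat-pt-birational} to each of the two resulting birational morphisms. You simply spell out the construction of that common resolution (graph closure plus resolution of threefold singularities over a perfect field) a bit more explicitly.
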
%
\begin{proof}
Taking a common resolution, the assertion follows from Theorem~\ref{rat-pt-birational}.
\end{proof}

Now, we prove one of the main theorems of this paper. 

\begin{thm}\label{thm:logfano_ratpo}
Let $X$ be a proper geometrically connected threefold over a finite field $\mbF _q$ 
of characteristic $p>5$. Suppose that $(X, \Delta)$ is klt for some effective $\mbQ$-divisor $\Delta$ 
and $X$ is rationally chain connected. 

Then $\# X(\mbF _q) \equiv 1 \pmod q$ holds. 
In particular, $X$ has a rational point. 
\end{thm}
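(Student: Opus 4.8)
The plan is to apply the congruence of \cite[Theorem~1.1]{BBE}, which for a proper scheme over $\mbF_q$ reads
\[
\# X(\mbF_q)\;\equiv\;\sum_{i\ge 0}(-1)^i\operatorname{Tr}\bigl(F\mid H^i(X,W\mcO_{X,K})\bigr)\pmod{q},
\]
where $K:=W(\mbF_q)\otimes_{\mbZ}\mbQ$ and $F$ is the $q$-power Frobenius. Hence it is enough to show (i) $H^i(X,W\mcO_{X,\mbQ})=0$ for all $i>0$, which by Remark~\ref{cr-remark}~(5) is equivalent to $H^i(X,W\mcO_{X,K})=0$, and (ii) that $H^0(X,W\mcO_{X,K})\cong K$ with $F$ acting as the identity: then the alternating sum of traces equals $1$, and the last assertion of the theorem follows because $\#X(\mbF_q)\equiv 1\pmod q$ forces $\#X(\mbF_q)\ne 0$. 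Statement (ii) is immediate: since $(X,\Delta)$ is klt, $X$ is normal, hence geometrically normal and in particular geometrically reduced over the perfect field $\mbF_q$; being moreover geometrically connected and proper, $H^0(X,\mcO_X)=\mbF_q$, so $H^0(X,W\mcO_X)=W(\mbF_q)$, and the $q$-power Frobenius acts on it as $W(\mathrm{id}_{\mbF_q})=\mathrm{id}$.

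For the vanishing (i) I would first reduce to the smooth case. Take a resolution of singularities $\varphi\colon V\to X$ with $V$ a smooth projective threefold (\cite{CP}). As $X$ is normal, $\varphi_*\mcO_V=\mcO_X$ and every fibre of $\varphi$ is geometrically connected, so $\varphi_*W\mcO_{V,\mbQ}\cong W\mcO_{X,\mbQ}$ by Lemma~\ref{geom-homeo}; and $X$ has $W\mcO$-rational singularities by Theorem~\ref{thm:WO}, so $R^i\varphi_*(W\mcO_{V,\mbQ})=0$ for $i>0$ (Remark~\ref{rmk:any}). The Grothendieck spectral sequence for the composition $V\xrightarrow{\varphi}X\to\Spec\mbF_q$ then degenerates and yields $H^i(V,W\mcO_{V,\mbQ})\cong H^i(X,W\mcO_{X,\mbQ})$ for all $i$, so it suffices to prove $H^i(V,W\mcO_{V,\mbQ})=0$ for $i>0$. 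Next I would check that $V$ is rationally chain connected: base changing to an uncountable algebraically closed field (Remark~\ref{rmk:rcc}~(1)), a general pair of closed points of $V$ lies over a general pair of closed points of $X$, which by Remark~\ref{rmk:rcc}~(2) is joined by a connected chain $\Gamma$ of rational curves in $X$; applying Theorem~\ref{birational-RCC} to the klt threefold $X$ and to the rationally chain connected closed subscheme $\Gamma$ shows that $\varphi^{-1}(\Gamma)\subset V$ is rationally chain connected, and since it contains the chosen pair of points of $V$ we conclude that $V$ is rationally chain connected. Finally, for a smooth projective rationally chain connected variety $V$ over $\mbF_q$ one has $H^i(V,W\mcO_{V,\mbQ})=0$ for $i>0$; this is the passage of Esnault's vanishing \cite{Esnault} to Witt vector cohomology proved in \cite{BE}, and may also be deduced from the formula of \cite{BBE} together with the decomposition-of-the-diagonal / coniveau argument of \cite{Esnault}.

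The main obstacle is this last ingredient --- the vanishing $H^i(V,W\mcO_{V,\mbQ})=0$ for $i>0$ on a smooth proper rationally chain connected threefold --- which is where the genuine arithmetic-geometric content sits (the behaviour of crystalline slopes under rational chain connectedness); by contrast, the passage from $X$ to $V$, the identification of $H^0$, and the trace-formula step are all formal. A secondary subtlety worth spelling out carefully is the implication ``the fibres of $\varphi$ over $X$ are rationally chain connected and $X$ is rationally chain connected $\Rightarrow$ $V$ is rationally chain connected'', which is exactly what applying Theorem~\ref{birational-RCC} to an entire chain $\Gamma$, rather than to a single point, accomplishes.
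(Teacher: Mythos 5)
Your proof is correct and uses exactly the same core ingredients as the paper: existence of a resolution of singularities in dimension three (\cite{CP, A}), the $W\mcO$-rationality of klt threefolds (Theorem~\ref{thm:WO} together with Remark~\ref{rmk:any}), the preservation of rational chain connectedness under proper birational morphisms (Theorem~\ref{birational-RCC}), and Esnault's result for smooth projective rationally chain connected varieties over a finite field. The only difference is organizational: the paper packages the birational invariance of the point count modulo $q$ into Theorem~\ref{rat-pt-birational} (via \cite[Proposition~6.9]{BBE}) and then applies Esnault's point-count argument directly to the smooth resolution $V$, whereas you first establish the Witt-cohomology vanishing on $X$ itself by pushing Esnault's vanishing on $V$ through the Leray spectral sequence (this is precisely the paper's route for the companion Theorem~\ref{thm:WV}) and then run the \cite[Theorem~1.1]{BBE} trace formula on $X$. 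Both versions are correct, and amount to the same argument. A small simplification you could make: instead of joining a general pair of points of $V$ through a chain $\Gamma$, you may simply apply Theorem~\ref{birational-RCC} with $\Gamma = X$ (the whole threefold, which is RCC by hypothesis) to conclude directly that $V=\varphi^{-1}(X)$ is rationally chain connected.
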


\begin{proof}
Taking a resolution of singularities, 
we may assume that $X$ is smooth by Theorem~\ref{birational-RCC} and Theorem~\ref{rat-pt-birational}. 
Then the assertion follows from \cite[Theorem 1.1]{Esnault} and the argument between (1.1) and (1.4) in \cite{Esnault}.
\end{proof}

Next, we consider the relative case.

\begin{thm}\label{thm:rational-pt}
Let $f:X \to Y$ be a proper morphism of normal varieties over $\mathbb F_q$ of characteristics $p>5$ with $f_*\mcO_X=\mcO_Y$. 
Suppose that there exists an effective $\Q$-divisor $\Delta$ 
such that $(X, \Delta)$ is a klt threefold and $-(K_X+\Delta)$ is $f$-nef and $f$-big. 
Then 
\[
\# X_y (\mbF _q) \equiv 1 \pmod q
\]
holds for any $\mathbb F_q$-rational point $y$ on $Y$ and the fiber $X_y$ over $y$. 
In particular, 
$$\# X(\mbF _q) \equiv \# Y(\mathbb F_q)  \pmod q.$$
\end{thm}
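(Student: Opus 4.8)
The plan is to reduce the fiberwise statement to the global Fano case already proved. First I would fix an $\mathbb{F}_q$-rational point $y \in Y$ and consider the fiber $X_y := f^{-1}(y)$ with its reduced structure; the key point is that $X_y$, being a fiber of a log Fano contraction, should again be of Fano type after suitable adjustment, so that Theorem~\ref{thm:logfano_ratpo} applies to it. Concretely, I would invoke Proposition~\ref{MFS-special} (or rather the reduction it provides): up to a projective birational modification $\varphi : W \to X$ and $\psi : W \to Y'$ over $Y$, the fiber over $y$ becomes $\llcorner \Delta_{Y'} \lrcorner$, which is a surface of Fano type, or more directly one uses that the generic-type fiber is rationally chain connected by Theorem~\ref{thm:rcc_rel}. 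So the real input is: $X_y$ is geometrically connected (from $f_*\mathcal{O}_X = \mathcal{O}_Y$, since $H^0(X_y, \mathcal{O}_{X_y}) = k(y) = \mathbb{F}_q$), it carries a klt structure (by adjunction / generic smoothness of the modification, after possibly passing to the birational model and using Theorem~\ref{birational-RCC} and Theorem~\ref{rat-pt-birational} to control point counts), and it is rationally chain connected by Theorem~\ref{thm:rcc_rel}.

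The main step is therefore the following chain: by Theorem~\ref{thm:rcc_rel}, every fiber of $f$ is rationally chain connected, in particular $X_y$ is; by the base change $f_*\mathcal{O}_X = \mathcal{O}_Y$ together with $y$ being $\mathbb{F}_q$-rational, $X_y$ is geometrically connected over $\mathbb{F}_q$. One then wants to apply Theorem~\ref{thm:logfano_ratpo} to $X_y$, which requires a klt pair structure on $X_y$ itself. Since $X_y$ may be singular or non-normal, I would pass to a resolution of singularities $\widetilde{X_y} \to X_y$ and use Theorem~\ref{rat-pt-birational} (point counts are congruent mod $q$ under birational morphisms between klt threefolds/surfaces, and more generally under resolutions of varieties with $W\mathcal{O}$-rational singularities via \cite[Proposition 6.9]{BBE}); combined with Theorem~\ref{birational-RCC}, rational chain connectedness is preserved, so $\widetilde{X_y}$ is a smooth proper rationally chain connected variety over $\mathbb{F}_q$. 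Then Esnault's theorem \cite{Esnault} — or rather the version for rationally chain connected varieties — gives $\# \widetilde{X_y}(\mathbb{F}_q) \equiv 1 \pmod q$, and transporting back along the resolution yields $\# X_y(\mathbb{F}_q) \equiv 1 \pmod q$. The hard part will be making the adjunction/restriction argument rigorous when $X_y$ is not of pure dimension or not reduced, and ensuring that one genuinely lands in the hypotheses of Theorem~\ref{thm:logfano_ratpo}; I expect this to be handled exactly as in the proof of Theorem~\ref{thm:logfano_ratpo}, reducing to the smooth case first via Theorem~\ref{birational-RCC} and Theorem~\ref{rat-pt-birational}, so that the klt structure is only needed as an auxiliary device to guarantee $W\mathcal{O}$-rationality of the relevant models.

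For the "in particular" clause, I would argue as follows. Write $Y(\mathbb{F}_q) = \{y_1, \ldots, y_N\}$, so $\# Y(\mathbb{F}_q) = N$. The set $X(\mathbb{F}_q)$ decomposes as the disjoint union $\bigsqcup_{j=1}^N X_{y_j}(\mathbb{F}_q)$, because an $\mathbb{F}_q$-point of $X$ maps to an $\mathbb{F}_q$-point of $Y$ and lies in the corresponding fiber. Hence
\[
\# X(\mathbb{F}_q) = \sum_{j=1}^N \# X_{y_j}(\mathbb{F}_q) \equiv \sum_{j=1}^N 1 = N = \# Y(\mathbb{F}_q) \pmod q,
\]
using the fiberwise congruence just established for each $y_j$. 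This is purely bookkeeping once the main statement is in hand.

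The step I expect to be the main obstacle is verifying that the individual fibers $X_y$ fit the framework of Theorem~\ref{thm:logfano_ratpo}: a priori a fiber of a klt Fano contraction need not be normal, nor irreducible, and the naive adjunction $(K_X + \Delta)|_{X_y}$ need not produce a klt pair on $X_y$. My intended workaround is to never restrict the pair directly, but instead to use Proposition~\ref{MFS-special} to replace the contraction birationally by one whose fiber over $y$ is the reduced boundary $\llcorner \Delta_{Y'}\lrcorner$ — a surface of Fano type, hence handled — and then to transfer point-count congruences back and forth using Theorem~\ref{rat-pt-birational} and Theorem~\ref{birational-RCC}; since both rational chain connectedness and mod-$q$ point counts are birational invariants in this setting, the reduction is clean. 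Everything else is the same Lefschetz-trace-formula / Witt-vector-cohomology vanishing mechanism already used for Theorem~\ref{mainthm:ratpo}.
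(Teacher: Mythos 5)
Your ``workaround'' in the last paragraph is essentially the argument the paper uses: invoke Proposition~\ref{MFS-special} to replace $f$ birationally by a contraction $f':X'\to Y$ whose fiber over $y$ is a surface of Fano type, and transfer point counts using Theorem~\ref{rat-pt-birational} for the two birational morphisms $W\to X$ and $W\to X'$. Your ``in particular'' bookkeeping is also correct and matches what the paper means by the second assertion following from the first.

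However, two points need attention. First, the preliminary attempts in your proposal would not go through. You cannot apply Theorem~\ref{thm:logfano_ratpo} to $X_y$ itself: it is stated for proper klt threefolds, whereas $X_y$ is of strictly smaller dimension and carries no given klt structure. The alternative of passing to a resolution $\widetilde{X_y}\to X_y$ and ``transporting back'' is also not available as stated, because transporting a mod-$q$ congruence along a resolution requires $W\mathcal{O}$-rationality of $X_y$ (this is what \cite[Proposition~6.9]{BBE} needs), and you have not produced a klt or Fano-type structure on $X_y$ that would guarantee it; indeed special fibers need not even be normal.

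Second, and this is the genuine gap, your ``transfer point-count congruences back and forth'' between the fiber and the birational model is not automatic. The fibers $X_y$ and $S:=f'^{-1}(y)$ are not birational to each other in general, so birational invariance of the global mod-$q$ count does not by itself give a fiberwise congruence. The device that makes the reduction clean is missing from your proposal: since the statement is local on $Y$, one may shrink $Y$ to an open neighbourhood of $y$ on which $y$ is the \emph{unique} $\mathbb{F}_q$-rational point (possible because $Y(\mathbb{F}_q)$ is finite). After this localization one has $\# X_y(\mathbb{F}_q)=\# X(\mathbb{F}_q)$ and $\# S(\mathbb{F}_q)=\# X'(\mathbb{F}_q)$, and then Theorem~\ref{rat-pt-birational} applied to the threefolds $X$ and $X'$ (via the common model $W$) together with the Fano-type surface congruence $\# S(\mathbb{F}_q)\equiv 1\pmod q$ gives the desired conclusion. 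Once you add this localization step, the rest of your proposed chain closes up exactly as in the paper.
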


\begin{proof}
We shall prove the first assertion, since the second assertion follows from the first assertion. 
We may assume that $\dim Y > 0$ by Theorem~\ref{thm:logfano_ratpo}. 
Further, we may assume that $f$ is projective by Chow's lemma and Theorem~\ref{rat-pt-birational}. 
Since the statement is local on $Y$, we may also assume that $Y$ has a unique $\mbF _q$-rational point $y$. 

By Proposition~\ref{MFS-special}, we have a commutative diagram 
 $$\begin{CD}
W @>\varphi'>> X'\\
@VV\varphi V @VVf'V\\
X @>f>> Y,
\end{CD}$$
such that $\varphi$ and $\varphi'$ are projective birational morphisms, and 
$S := f'^{-1}(y)$ is a surface of Fano type.  By Theorem~\ref{rat-pt-birational}, we have
\[
\# X(\mbF _q) \equiv \# X'(\mathbb F_q)  \pmod q.
\]
Further $\# X' (\mbF _q) = \# S (\mbF _q)$ holds as $y$ is the unique rational point on $Y$. 
Since $S$ is a surface of Fano type (hence a rational surface with rational singularities)
it follows that $\# S(\mbF _q) \equiv 1 \pmod q$, which shows
\[
\# X_y (\mbF _q) = \# X (\mbF _q) \equiv \# X'(\mathbb F_q) = \# S (\mbF _q) \equiv 1 \pmod q . 
\]
\end{proof}

By the same technique as above, we can also show the following vanishing theorem. 

\begin{thm}\label{thm:WV}
Let $X$ be a proper threefold over a perfect field of characteristic $p>5$. 
Suppose that $(X, \Delta)$ is klt for some effective $\mbQ$-divisor $\Delta$ 
and $X$ is rationally chain connected. 
Then $H^i (X, W \mcO _{X, \mbQ}) = 0$ for $i > 0$. 
\end{thm}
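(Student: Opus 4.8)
The plan is to follow the same pattern as in the proofs of Theorems~\ref{thm:logfano_ratpo} and \ref{thm:rational-pt}: pass to a resolution of singularities, where the vanishing is already available in the smooth case, and then descend along the resolution by means of the $W\mcO$-rationality of klt threefolds established in Theorem~\ref{thm:WO}. (Alternatively, since $W\mcO$-rational singularities are in particular Witt-rational in the sense of Blickle--Esnault, one can apply \cite{BE} directly to $X$ itself, combining the $W\mcO$-rationality of Theorem~\ref{thm:WO} with the rational chain connectedness hypothesis; the argument below just makes this reduction explicit.)

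First I would take a resolution $\varphi\colon V\to X$ with $V$ a smooth proper threefold, which exists in dimension three by \cite{CP}. Since $(X,\Delta)$ is klt, $X$ is normal, and $\varphi$ is a proper birational morphism of normal threefolds. Applying Theorem~\ref{birational-RCC} with $\Gamma=X$, which is rationally chain connected by hypothesis, yields that $V=\varphi^{-1}(X)$ is rationally chain connected.

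Next I would invoke Theorem~\ref{thm:WO}: the klt threefold $X$ has $W\mcO$-rational singularities, so $R^i\varphi_*(W\mcO_{V,\mbQ})=0$ for $i>0$ (this holds for any resolution, cf.\ Remark~\ref{rmk:any}). On the other hand $\varphi_*\mcO_V=\mcO_X$ and $X$ is normal, hence every fiber of $\varphi$ is geometrically connected, and Lemma~\ref{geom-homeo} gives $\varphi_*(W\mcO_{V,\mbQ})\cong W\mcO_{X,\mbQ}$ in $\mathcal{A}(X)_{\mbQ}$. The Grothendieck spectral sequence
\[
E_2^{i,j}=H^i\big(X,R^j\varphi_*(W\mcO_{V,\mbQ})\big)\Longrightarrow H^{i+j}(V,W\mcO_{V,\mbQ})
\]
then degenerates, yielding $H^i(X,W\mcO_{X,\mbQ})\cong H^i(V,W\mcO_{V,\mbQ})$ for all $i\ge 0$. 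Finally, since $V$ is smooth, proper and rationally chain connected over a perfect field of characteristic $p>0$, the right-hand side vanishes for $i>0$ by the known smooth case, e.g.\ \cite{BE} (cf.\ \cite{Esnault}, \cite{BBE}), which completes the proof.

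The step requiring the most care is the isomorphism $H^i(X,W\mcO_{X,\mbQ})\cong H^i(V,W\mcO_{V,\mbQ})$: one must work consistently in the quotient category $\mathcal{A}(X)_{\mbQ}$ so that the higher direct images and the spectral sequence behave as in Remark~\ref{cr-remark}, and one must know that $\varphi$ has geometrically connected fibers so that the bottom row $E_2^{i,0}$ is the cohomology of $W\mcO_{X,\mbQ}$ itself. The other delicate point is the input in the last step, the vanishing of $H^i(V,W\mcO_{V,\mbQ})$ for a smooth proper rationally chain connected $V$, where it is essential that rational chain connectedness (rather than rational connectedness) suffices; this is exactly the situation covered by \cite{BE}.
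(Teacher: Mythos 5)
Your proof is correct and follows essentially the same route as the paper: pass to a resolution, use Theorem~\ref{birational-RCC} to carry rational chain connectedness up to $V$, apply the smooth case (Esnault/Berthelot--Bloch--Esnault) to get $H^i(V,W\mcO_{V,\mbQ})=0$, and descend via the $W\mcO$-rationality of Theorem~\ref{thm:WO} and the Leray spectral sequence. You are somewhat more explicit than the paper about the identification $\varphi_*(W\mcO_{V,\mbQ})\cong W\mcO_{X,\mbQ}$ via Lemma~\ref{geom-homeo}, which the paper uses implicitly; the alternative you sketch (applying \cite{BE} directly to $X$) is also the route indicated in the introduction.
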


\begin{proof}
Let $\varphi:V \to X$ be a birational morphism from a smooth projective threefold $V$. 
Since $V$ is also rationally chain connected (Theorem~\ref{birational-RCC}), we obtain 
$H^i (V, W \mcO _V) \otimes_{\mbZ} \Q = 0$ for $i > 0$ by \cite[Theorem 1.1]{Esnault}. 
By Remark~\ref{cr-remark}, we have that $H^i (V, W \mcO _{V, \Q}) = 0$ for $i > 0$. 
Since $X$ has $W\MO$-rational singularities, 
it follows that $R^i \varphi _* (W\mcO _{V, \mbQ}) = 0$ for $i > 0$, 
which proves $H^i (X, W \mcO _{X, \mbQ}) = 0$ for $i > 0$
by the Leray spectral sequence. 
\end{proof}

Lastly, we conclude the main theorems using Theorem~\ref{thm:rcc_rel}. 

\begin{proof}[Proof of Theorem \ref{mainthm:ratpo}]
This follow from Theorem~\ref{thm:rcc_rel} and Theorem \ref{thm:logfano_ratpo}. 
\end{proof}
\begin{proof}[Proof of Theorem \ref{mainthm:WV}]
This follow from Theorem~\ref{thm:rcc_rel} and Theorem~\ref{thm:WV}. 
\end{proof}

%%%%%%%%%%%%%%%%%%%%%%%%%%%%%%%%%%%%%%%%%%%%%%%%%%%%%%%%%%%%%%%%%%%%

\begin{bibdiv}
\begin{biblist*}

\bib{A}{book}{
   author={Abhyankar, Shreeram Shankar},
   title={Resolution of singularities of embedded algebraic surfaces.},
   series={Pure and Applied Mathematics},
   volume={24},
   %note={},
   publisher={Academic Press, New York-London},
   date={1966},
   pages={ix+291},
}

\bib{BBE}{article}{
   author={Berthelot, Pierre},
   author={Bloch, Spencer},
   author={Esnault, H{\'e}l{\`e}ne},
   title={On Witt vector cohomology for singular varieties},
   journal={Compos. Math.},
   volume={143},
   date={2007},
   number={2},
   pages={363--392},
}

\bib{Birkar:sLMMP}{article}{
   author={Birkar, Caucher},
   title={Existence of log canonical flips and a special LMMP},
   journal={Publ. Math. Inst. Hautes \'Etudes Sci.},
   date={2012},
   pages={325--368},
}

\bib{Birkar}{article}{
   author={Birkar, Caucher},
   title={Existence of flips and minimal models for 3-folds in char p},
   journal={to appear in Ann. Sci. Ecole Norm. Sup.},
   eprint={arXiv:1311.3098v2},
}

\bib{BW}{article}{
   author={Birkar, Caucher},
   author={Waldron, Joe},
   title={Existence of Mori fibre spaces for 3-folds in char $p$},
   eprint={arXiv:1410.4511v1},
}

\bib{BE}{article}{
   author={Blickle, Manuel},
   author={Esnault, H{\'e}l{\`e}ne},
   title={Rational singularities and rational points},
   journal={Pure Appl. Math. Q.},
   volume={4},
   date={2008},
   number={3},
   pages={729--741},
}

\bib{campana}{article}{
   author={Campana, F.},
   title={Connexit\'e rationnelle des vari\'et\'es de Fano},
   language={French},
   journal={Ann. Sci. \'Ecole Norm. Sup. (4)},
   volume={25},
   date={1992},
   number={5},
   pages={539--545},
}

\bib{CMM}{article}{
   author={Cascini, Paolo},
   author={McKernan, James},
   author={Musta{\c{t}}{\u{a}}, Mircea},
   title={The augmented base locus in positive characteristic},
   journal={Proc. Edinb. Math. Soc. (2)},
   volume={57},
   date={2014},
   number={1},
   pages={79--87},
}

\bib{CR}{article}{
   author={Chatzistamatiou, Andre},
   author={R{\"u}lling, Kay},
   title={Higher direct images of the structure sheaf in positive
   characteristic},
   journal={Algebra Number Theory},
   volume={5},
   date={2011},
   number={6},
   pages={693--775},
}

\bib{CR2}{article}{
   author={Chatzistamatiou, Andre},
   author={R{\"u}lling, Kay},
   title={Hodge-Witt cohomology and Witt-rational singularities},
   journal={Doc. Math.},
   volume={17},
   date={2012},
   pages={663--781},
}

\bib{CP}{article}{
   author={Cossart, Vincent},
   author={Piltant, Olivier},
   title={Resolution of singularities of threefolds in positive
   characteristic. I. Reduction to local uniformization on Artin-Schreier
   and purely inseparable coverings},
   journal={J. Algebra},
   volume={320},
   date={2008},
   number={3},
   pages={1051--1082},
}

\bib{dJ:quasi-resolution}{article}{
   author={de Jong, A. Johan},
   title={Families of curves and alterations},
   journal={Ann. Inst. Fourier (Grenoble)},
   volume={47},
   date={1997},
   number={2},
   pages={599--621},
}

\bib{Esnault}{article}{
   author={Esnault, H{\'e}l{\`e}ne},
   title={Varieties over a finite field with trivial Chow group of 0-cycles
   have a rational point},
   journal={Invent. Math.},
   volume={151},
   date={2003},
   number={1},
   pages={187--191},
}

\bib{Fujino:ST}{article}{
   author={Fujino, Osamu},
   title={Special termination and reduction to pl flips},
   conference={
      title={Flips for 3-folds and 4-folds},
   },
   book={
      series={Oxford Lecture Ser. Math. Appl.},
      volume={35},
      publisher={Oxford Univ. Press, Oxford},
   },
   date={2007},
   pages={63--75},
}

\bib{FT}{article}{
   author={Fujino, Osamu},
   author={Tanaka, Hiromu},
   title={On log surfaces},
   journal={Proc. Japan Acad. Ser. A Math. Sci.},
   volume={88},
   date={2012},
   number={8},
   pages={109--114},
}

\bib{6authors}{article}{
   author={Gongyo, Yoshinori},
   author={Li, Zhiyuan},
   author={Patakfalvi, Zsolt},
   author={Schwede, Karl},
   author={Tanaka, Hiromu},
   author={Zong, Runhong},
   title={On rational connectedness of globally $F$-regular threefolds},
   journal={Adv. Math.},
   volume={280},
   date={2015},
   pages={47--78},
}

\bib{EGAIV2}{article}{
   author={Grothendieck, A.},
   title={\'El\'ements de g\'eom\'etrie alg\'ebrique. IV. \'Etude locale des
   sch\'emas et des morphismes de sch\'emas. II},
   language={French},
   journal={Inst. Hautes \'Etudes Sci. Publ. Math.},
   number={24},
   date={1965},
   pages={231},
}

\bib{HM}{article}{
   author={Hacon, Christopher D.},
   author={Mckernan, James},
   title={On Shokurov's rational connectedness conjecture},
   journal={Duke Math. J.},
   volume={138},
   date={2007},
   number={1},
   pages={119--136},
}

\bib{HX}{article}{
   author={Hacon, Christopher D.},
   author={Xu, Chenyang},
   title={On the three dimensional minimal model program in positive
   characteristic},
   journal={J. Amer. Math. Soc.},
   volume={28},
   date={2015},
   number={3},
   pages={711--744},
}

\bib{Illusie}{article}{
   author={Illusie, Luc},
   title={Complexe de de\thinspace Rham-Witt et cohomologie cristalline},
   language={French},
   journal={Ann. Sci. \'Ecole Norm. Sup. (4)},
   volume={12},
   date={1979},
   number={4},
   pages={501--661},
}

\bib{KMM1}{article}{
   author={Kawamata, Yujiro},
   author={Matsuda, Katsumi},
   author={Matsuki, Kenji},
   title={Introduction to the minimal model problem},
   conference={
      title={Algebraic geometry, Sendai, 1985},
   },
   book={
      series={Adv. Stud. Pure Math.},
      volume={10},
      publisher={North-Holland, Amsterdam},
   },
   date={1987},
   pages={283--360},
}

\bib{Keel}{article}{
   author={Keel, Se{\'a}n},
   title={Basepoint freeness for nef and big line bundles in positive
   characteristic},
   journal={Ann. of Math. (2)},
   volume={149},
   date={1999},
   number={1},
   pages={253--286},
}

\bib{KEW}{article}{
   author={Kerz, Moritz},
   author={Esnault, H{\'e}l{\`e}ne},
   author={Wittenberg, Olivier},
   title={A restriction isomorphism for cycles of relative dimension zero},
   eprint={arXiv:1503.08187v1},
}

\bib{Kollar2}{book}{
   author={Koll{\'a}r, J{\'a}nos},
   title={Rational curves on algebraic varieties},
   series={Ergebnisse der Mathematik und ihrer Grenzgebiete. 3. Folge. A
   Series of Modern Surveys in Mathematics}, 
%   [Results in Mathematics and
%   Related Areas. 3rd Series. A Series of Modern Surveys in Mathematics]},
   volume={32},
   publisher={Springer-Verlag, Berlin},
   date={1996},
}

\bib{Kollar:quot_space}{article}{
   author={Koll{\'a}r, J{\'a}nos},
   title={Quotient spaces modulo algebraic groups},
   journal={Ann. of Math. (2)},
   volume={145},
   date={1997},
   number={1},
   pages={33--79},
}

\bib{Kollar}{book}{
   author={Koll{\'a}r, J{\'a}nos},
   title={Singularities of the minimal model program},
   series={Cambridge Tracts in Mathematics},
   volume={200},
   note={With a collaboration of S\'andor Kov\'acs},
   publisher={Cambridge University Press, Cambridge},
   date={2013},
}

\bib{KMM2}{article}{
   author={Koll{\'a}r, J{\'a}nos},
   author={Miyaoka, Yoichi},
   author={Mori, Shigefumi},
   title={Rational connectedness and boundedness of Fano manifolds},
   journal={J. Differential Geom.},
   volume={36},
   date={1992},
   number={3},
   pages={765--779},
}

\bib{KMbook}{book}{
   author={Koll{\'a}r, J{\'a}nos},
   author={Mori, Shigefumi},
   title={Birational geometry of algebraic varieties},
   series={Cambridge Tracts in Mathematics},
   volume={134},
%   note={With the collaboration of C. H. Clemens and A. Corti;
%   Translated from the 1998 Japanese original},
   publisher={Cambridge University Press, Cambridge},
   date={1998},
}

\bib{maddock}{article}{
   author={Maddock, Zachary},
   title={Regular del Pezzo surfaces with irregularity},
     eprint={arXiv:1304.5555v1}
}

\bib{Poonen}{article}{
   author={Poonen, Bjorn},
   title={Bertini theorems over finite fields},
   journal={Ann. of Math.},
   volume={160},
   date={2004},
   number={3},
   pages={1099--1127},
}

\bib{PS}{article}{
   author={Prokhorov, Yu. G.},
   author={Shokurov, V. V.},
   title={Towards the second main theorem on complements},
   journal={J. Algebraic Geom.},
   volume={18},
   date={2009},
   number={1},
   pages={151--199},
}

\bib{schroer}{article}{
   author={Schr{\"o}er, Stefan},
   title={Weak del Pezzo surfaces with irregularity},
   journal={Tohoku Math. J. (2)},
   volume={59},
   date={2007},
   number={2},
   pages={293--322},
}

\bib{ScSm}{article}{
   author={Schwede, Karl},
   author={Smith, Karen E.},
   title={Globally $F$-regular and log Fano varieties},
   journal={Adv. Math.},
   volume={224},
   date={2010},
   number={3},
   pages={863--894},
}

\bib{Shokurov}{article}{
   author={Shokurov, V. V.},
   title={On rational connectedness},
   language={Russian, with Russian summary},
   journal={Mat. Zametki},
   volume={68},
   date={2000},
   number={5},
   pages={771--782},
   translation={
      journal={Math. Notes},
      volume={68},
      date={2000},
      number={5-6},
      pages={652--660},
   },
}

\bib{Serre}{book}{
   author={Serre, Jean-Pierre},
   title={Local fields},
   series={Graduate Texts in Mathematics},
   volume={67},
   note={Translated from the French by Marvin Jay Greenberg},
   publisher={Springer-Verlag, New York-Berlin},
   date={1979},
   pages={viii+241},
}

\bib{Tanaka}{article}{
   author={Tanaka, Hiromu},
   title={Minimal models and abundance for positive characteristic log
   surfaces},
   journal={Nagoya Math. J.},
   volume={216},
   date={2014},
   pages={1--70},
}

\bib{T1}{article}{
   author={Tanaka, Hiromu},
   title={The X-method for klt surfaces in positive characteristic},
   journal={J. Algebraic Geom.},
   volume={24},
   date={2015},
   number={4},
   pages={605--628},
}

\bib{T3}{article}{
   author={Tanaka, Hiromu},
   title={Abundance theorem for semi log canonical surfaces in positive characteristic},
   journal={Osaka J. Math.},
   volume={53},
   date={2016},
   number={2},
   pages={535--566},
}

%\bib{T3}{article}{
%   author={Tanaka, Hiromu},
%   title={Abundance theorem for semi log canonical surfaces in positive characteristic},
%   eprint={arXiv:1301.6889v3}
%}

\bib{Tanaka:behavior}{article}{
   author={Tanaka, Hiromu},
   title={Behavior of canonical divisors under purely inseparable base changes},
   journal={to appear in J. Reine und Angew. Math.},
   eprint={arXiv:1502.01381v3},
}

\bib{Tanaka:semiample}{article}{
   author={Tanaka, Hiromu},
   title={Semiample perturbations for log canonical varieties over an F-finite field containing an infinite perfect field},
   eprint={arXiv:1503.01264v3},
}

\bib{Wang}{article}{
   author={Wang, Yuan},
   title={On relative rational chain connectedness of threefolds with anti-big 
   canonical divisors in positive characteristics},
   eprint={arXiv:1512.06189v1},
}

\bib{Xu2}{article}{
   author={Xu, Chenyang},
   title={Finiteness of algebraic fundamental groups},
   journal={Compos. Math.},
   volume={150},
   date={2014},
   number={3},
   pages={409--414},
}

\bib{Xu}{article}{
   author={Xu, Chenyang},
   title={On the base-point-free theorem of 3-folds in positive
   characteristic},
   journal={J. Inst. Math. Jussieu},
   volume={14},
   date={2015},
   number={3},
   pages={577--588},
}

\bib{Zhang}{article}{
   author={Zhang, Qi},
   title={Rational connectedness of log ${\bf Q}$-Fano varieties},
   journal={J. Reine Angew. Math.},
   volume={590},
   date={2006},
   pages={131--142},
}

\end{biblist*}
\end{bibdiv}

\end{document}